\theoremstyle{plain}
\newtheorem{xx}{xx}[section]
\newtheorem{thm}[xx]{Theorem}
\newtheorem*{thm*}{Theorem}
\newtheorem*{thmA*}{Theorem A}
\newtheorem*{thmB*}{Theorem B}
\newtheorem{prop}[xx]{Proposition}
\newtheorem{cor}[xx]{Corollary}
\newtheorem{lem}[xx]{Lemma}
\newtheorem*{lem*}{Lemma}
\theoremstyle{definition}
\newtheorem{defn}[xx]{Definition}
\newtheorem*{defn*}{Definition}
\newtheorem{ex}[xx]{Example}
\theoremstyle{remark}
\newtheorem{rem}[xx]{Remark}
\newtheorem*{rem*}{Remark}
\numberwithin{equation}{xx}
\DeclareMathOperator{\Bl}{Bl}
\DeclareMathOperator{\ch}{c}
\DeclareMathOperator{\Char}{char}
\DeclareMathOperator{\codim}{codim}
\DeclareMathOperator{\coker}{coker}
\DeclareMathOperator{\Def}{Def}
\DeclareMathOperator{\cDef}{\mathsf{Def}}
\DeclareMathOperator{\depth}{depth}
\DeclareMathOperator{\cE}{E}
\DeclareMathOperator{\End}{End}
\DeclareMathOperator{\ShEnd}{\ms{E}\!\mathit{nd}}
\DeclareMathOperator{\Ext}{Ext}
\DeclareMathOperator{\ShExt}{\ms{E}\hspace{-0.09em}\mathnormal{xt}}
\DeclareMathOperator{\Grass}{Grass}
\DeclareMathOperator{\cH}{H}
\DeclareMathOperator{\Hom}{Hom}
\DeclareMathOperator{\sHom}{\ms{H}\hspace{-0.15em}\mathnormal{om}}
\DeclareMathOperator{\id}{{id}}
\DeclareMathOperator{\im}{im}
\DeclareMathOperator{\Pic}{Pic}
\DeclareMathOperator{\Proj}{Proj}
\DeclareMathOperator{\Quot}{Quot}
\DeclareMathOperator{\Res}{Res}
\DeclareMathOperator{\rk}{rk}
\DeclareMathOperator{\Sets}{\cat{Sets}}
\DeclareMathOperator{\Sl}{SL}
\DeclareMathOperator{\Spec}{Spec}
\DeclareMathOperator{\Supp}{Supp}
\DeclareMathOperator{\Syz}{Syz}
\newcommand{\co}{\colon}
\newcommand{\ra}{\rightarrow}
\newcommand{\la}{\leftarrow}
\newcommand{\lra}{\longrightarrow}
\newcommand{\thr}{\twoheadrightarrow}
\newcommand{\hra}{\hookrightarrow}
\newcommand{\llb}{\llbracket}
\newcommand{\rrb}{\rrbracket}
\newcommand{\ot}{\hspace{0.06em}{\otimes}\hspace{0.06em}}
\newcommand{\lRa}{\Leftrightarrow}
\newcommand{\Ra}{\Rightarrow}
\newcommand{\sbeq}{\subseteq}
\newcommand{\vare}{\varepsilon}
\newcommand{\vL}{\varLambda}
\newcommand{\dprime}{\prime\hspace{-0.1em}\prime}
\newcommand{\tri}{\vartriangle}
\newcommand\bdot{\ensuremath{%
  \mathchoice%
   {\mskip\thinmuskip\lower0.2ex\hbox{\scalebox{1.5}{$\cdot$}}\mskip\thinmuskip}}%
   {\mskip\thinmuskip\lower0.2ex\hbox{\scalebox{1.5}{$\cdot$}}\mskip\thinmuskip}%
   {\lower0.3ex\hbox{\scalebox{1.2}{$\cdot$}}}%
   {\lower0.3ex\hbox{\scalebox{1.2}{$\cdot$}}}%
}
\newcommand{\ccirc}{\mathbin{\mathchoice
  {\xcirc\scriptstyle}
  {\xcirc\scriptstyle}
  {\xcirc\scriptscriptstyle}
  {\xcirc\scriptscriptstyle}
}}
\newcommand{\xcirc}[1]{\vcenter{\hbox{$#1\circ$}}}
\renewcommand{\phi}{\varphi}
\renewcommand{\geq}{\geqslant}
\renewcommand{\leq}{\leqslant}
\newcommand{\BB}[1]{\mathbb{{#1}}}
\newcommand{\bs}[1]{\boldsymbol{{#1}}}
\newcommand{\fr}[1]{\mathfrak{{#1}}}
\newcommand{\cat}[1]{\mathsf{{#1}}}
\newcommand{\mr}[1]{\mathrm{{#1}}}
\newcommand{\ms}[1]{\mathscr{{#1}}}
\newcommand{\tn}{\textnormal}
\newcommand{\df}[2]{{\Def}_{\hspace{-0.08em}#2}^{#1}}
\newcommand{\cdf}[2]{{\cDef\hspace{-0.08em}}_{#2}^{#1}}
\newcommand{\hm}[4]{{\Hom}_{#2}^{#1}({#3},{#4})}
\newcommand{\nd}[3]{{\End} _{#2}^{#1}({#3})}
\newcommand{\Q}{\hspace{-0.05em}\mathcal{O}\hspace{-0.05em}}
\newcommand{\shm}[4]{{\sHom} _{#2}^{#1}({#3},{#4})}
\newcommand{\snd}[3]{{\ShEnd}_{#2}^{#1}({#3})}
\newcommand{\sxt}[4]{{\ShExt}_{#2}^{#1}({#3},{#4})}
\newcommand{\xla}[1]{\xleftarrow{{#1}}}
\newcommand{\xra}[1]{\xrightarrow{{#1}}}
\newcommand{\xt}[4]{{\Ext}_{#2}^{#1}({#3},{#4})}
\newcommand{\syz}[2]{{\Syz}_{#2}^{#1}}
\begin{document}
\title[Reflexive modules on rational surface singularities and flops]
{Deformations of rational surface singularities and reflexive modules\\ with an application to flops
}
\author{Trond St{\o}len Gustavsen and Runar Ile}
\address{University of Southeast Norway/Department of Mathematics \\University of Bergen \\Norway}
\email{trond.gustavsen@math.uib.no}
\address{BI Norwegian Business School/Department of Mathematics \\University of Bergen \\Norway}
\email{runar.ile@bi.no}
\keywords{Flatifying blowing-up, maximal Cohen-Macaulay module, simultaneous partial resolution, small resolution, rational double point, matrix factorisation} 
\subjclass[2010]
{Primary 14B07, 14E30; Secondary 14D23, 14E16}
\begin{abstract}
Blowing up a rational surface singularity in a reflexive module gives a (any) partial resolution dominated by the minimal resolution. 
The main theorem shows how deformations of the pair (singularity, module) relates to deformations of the corresponding pair of partial resolution and locally free strict transform, and to deformations of the underlying spaces. The results imply some recent conjectures on small resolutions and flops.
\end{abstract}
\maketitle
\section{Introduction}
We relate deformations of a rational surface singularity with a reflexive module to deformations of a partial resolution of the singularity with the locally free strict transform of the module.
Our results imply three conjectures of C. Curto and D. Morrison about how a family of small resolutions of a \(3\)-dimensional index one terminal singularity and its flop are obtained by blowing up in a maximal Cohen-Macaulay module and its syzygy. 

Rational surface singularities were defined by M.\ Artin in \cite{art:66}. Further foundational work was done by E.\ Brieskorn \cite{bri:68} and J.\ Lipman \cite{lip:69} and many studies have followed. In the 1980s the geometrical McKay correspondence was establised by G. Gonzales-Sprinberg and J.-L. Verdier \cite{gon-spr/ver:83} and generalised in \cite{art/ver:85}.
It gives a bijection between the isomorphism classes of (non-projective) indecomposable reflexive modules \(\{M_{i}\}\) and the prime components \(\{E_{j}\}\) of the exceptional divisor in the minimal resolution \(\tilde{X}\ra X\) of a rational double point (RDP), i.e. the \(\mr{A}_{n}\), \(\mr{D}_{n}\) and \(\mr{E}_{6-8}\). More precisely, if \(\ms{F}_{i}\) denotes the strict transform of \(M_{i}\) to \(\tilde{X}\), the Chern class of \(\ms{F}_{i}\) 
is dual to the prime divisor; \(\ch_{1}(\ms{F}_{i}).E_{j}=\delta_{ij}\), with \(\rk M_{i}\) equal to the multiplicity of \(E_{i}\) in the fundamental cycle.
For non-Gorenstein quotient surface singularities there are in general more indecomposable reflexive modules than prime components as was shown by H.\ Esnault \cite{esn:85}. However, O.\ Riemenschneider and his student J.\ Wunram gave a natural class of `special' reflexive modules (which we will call Wunram modules) for which the correspondence holds for any rational surface singularity \cite{rie:87,wun:88}. A.\ Ishii refined Wunram's result by means of a Fourier-Mukai transform in the case of quotient surface singularities \cite{ish:02}. M. Van den Bergh's use in \cite{vdber:04} of the endomorphism ring of a higher dimensional Wunram module to prove derived equivalences for flops induced a lot of activity, also attracting attention to the \(2\)-dimensional case with interesting results by M. Wemyss and collaborators, e.g.\ O.\ Iyama and Wemyss \cite{iya/wem:10,iya/wem:11} and Wemyss \cite{wem:11}.

The McKay-Wunram correspondence is foundational for this article: We prove that blowing up a rational surface singularity \(X\) in a reflexive module \(M\) (a special case of L. Gruson and M. Raynaud's flatifying blowing-up \cite{ray/gru:71}) gives a partial resolution \(f\co Y\ra X\) where \(Y\) in particular is normal, dominated by the minimal resolution, and the strict transform \(\ms{M}=f^{\tri}(M)\) is locally free. The partial resolution is determined by the Chern class \(\ch_{1}(\ms{F})\) of the strict transform $\ms{F}$ of $M$ to $\tilde{X}$. In particular, any partial resolution dominated by the minimal resolution is given by blowing up in a Wunram module. See Theorem \ref{thm.MW} for more precise statements. As an example, the RDP-resolution (obtained by contracting the \((-2)\)-curves in the minimal resolution) is given by blowing up in the canonical module \(\omega_{X}\).

Consider the deformations \(\df{}{(Y,\ms{M})}\) of the pair \((Y,\ms{M})\) which blow down to deformations of the pair \((X,M)\). Our main result (Theorem \ref{thm.square}) says that in the commutative diagram of deformation functors
\begin{equation*}
\xymatrix@C+12pt@R-6pt@H-6pt{
\hspace{-3.0em}\df{}{(Y,\ms{M})} \ar[r]^(0.45){\beta}\ar@<-2.5em>[d]_(0.45){\alpha} & \df{}{Y}\hspace{-0.5em}\ar@<0.12em>[d]^(0.45){\delta}
\\
\hspace{-3em}\df{}{(X,M)} \ar[r] & \df{}{X}\hspace{-0.5em}
}
\end{equation*}
the blowing down map \(\alpha\) is injective and the forgetful map \(\beta\) is smooth and in many situations an isomorphism. The injectivity of \(\alpha\) is surprising since the blowing down map \(\delta\) in general is not injective (cf. Remark \ref{rem.A} and \cite[6.4]{wah:75}). On spaces \(\delta\) is a Galois covering onto the Artin component \(A\) which for RDPs equals \(\df{}{X}\) \cite{bri:70,tju:70,pin:83,art:74b,wah:79}.
However, \(\beta\) is an isomorphism if \(M\) is Wunram (e.g. any reflexive on an RDP) implying that \(\delta\) factors through a closed embedding \(\alpha\beta^{-1}\co\df{}{Y}\sbeq \df{}{(X,M)}\) realising deformations of the partial resolution as deformations of the pair as conjectured by Curto and Morrison in the RDP case. A deformation of \(X\) in the component \(A\) lifts in general to a deformation of \((X,M)\) -- and of \(Y\) -- only after a finite base change. However, a deformation of the pair \((X,M)\) in the geometric image of \(\df{}{(Y,\ms{M})}\) lifts to a deformation of \((Y,\ms{M})\) without any base change. Note that \(\df{}{(X,M)}\) in general is not dominated by \(\df{}{(Y,\ms{M})}\), even for RDPs: in Example \ref{ex.fund}, \(M\) is the (rank two) fundamental module and \(\df{}{(X,M)}\) has two components while \(\df{}{Y}\) has one. A crucial ingredient (first proved by Lipman \cite{lip:79}) in J.\ Wahl's proof that the covering \(\df{}{{\tilde{X}}}\ra A\) has Galois action by a product of Weyl groups was the injectivity of \(\delta\) in the case \(Y\) is the RDP-resolution. This is an immediate consequence of our main result since \(\df{}{(X,\,\omega_{X})}\cong \df{}{X}\); see Corollary \ref{cor.Lip}. 
While knowledge of \(\df{}{(X,M)}\) would be interesting in itself, these results also indicate that there are interesting relations to \(\df{}{X}\), e.g. regarding the component structure.

In this article our main application of Theorem \ref{thm.square} is a generalisation of three conjectures of Curto and Morrison \cite{cur/mor:13} concerning the nature of small partial resolutions of \(3\)-dimensional index one terminal singularities and their flops. If \(g\co W\ra Z\) is such a small partial resolution and \(X\sbeq Z\) is a sufficiently generic hyperplane section with strict transform \(f\co Y\ra X\), a result of M. Reid \cite{rei:83} says that \(f\) is a partial resolution (normal, dominated by the minimal resolution) of an RDP. In particular, \(g\) is a \(1\)-parameter deformation of \(f\) and hence an element in \(\df{}{Y}\). By Theorem \ref{thm.MW}, \(Y\) is the blowing-up of \(X\) in a reflexive module \(M\). Then \(\alpha\beta^{-1}\) takes \(g\) to a \(1\)-parameter deformation \((Z,N)\) of the pair \((X,M)\). The basic result is the following (cf. Theorem \ref{thm.flop}):
\begin{cor}\label{cor.A}
There is a maximal Cohen-Macaulay \(\Q_{Z}\)-module \(N\) such that\textup{:}
\begin{enumerate}[leftmargin=2.4em, label=\textup{(\roman*)}] 
\item The small partial resolution \(W\ra Z\) is given by blowing up \(Z\) in \(N\)\textup{.} 
\item Blowing up \(Z\) in the syzygy module \(N^{+}\) of \(N\) gives the unique flop \(W^{+}\hspace{-0.3em}\ra Z\)\textup{.}
\item The length of the flop equals the rank of \(N\) if the flop is simple\textup{.}
\end{enumerate} 
\end{cor}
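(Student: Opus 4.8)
Most of the construction is already in place from the introduction; the plan is to unwind it and then extract (i)--(iii). Recall that a sufficiently generic hyperplane section $X\sbeq Z$ through the singular point is an RDP (Reid \cite{rei:83}) with strict transform $f\co Y\ra X$ a partial resolution dominated by the minimal resolution $\tilde X$; by Theorem \ref{thm.MW}, $Y=\mathrm{Bl}_{M}X$ for the Wunram module $M$ (without free summand) whose strict transform $\ms{M}=f^{\tri}M$ is locally free; the hyperplane pencil makes $g\co W\ra Z$ a $1$-parameter deformation of $f$, i.e.\ an element $\eta\in\df{}{Y}$ over a smooth curve germ $B$; and, $M$ being Wunram, Theorem \ref{thm.square} gives that $\beta$ is an isomorphism, so $\eta$ lifts uniquely to $\tilde\eta\in\df{}{(Y,\ms{M})}$ and $\alpha\beta^{-1}$ sends $g$ to the $1$-parameter deformation $(Z,N):=\alpha(\tilde\eta)$ of $(X,M)$, where $N$ is $\Q_{Z}$-coherent, flat over $B$, with $N\ot_{B}k\cong M$. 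Two observations: first, since $g$ is small its exceptional locus has codimension $\geq 2$, so the pull-back of the Cartier divisor $X$ has no exceptional component and equals $Y$, whence the exceptional fibre of $f$ over the singular point is exactly the scheme-theoretic fibre $g^{-1}(0)$, the flopping locus. Second, $M$ is maximal Cohen--Macaulay on the two-dimensional normal $X$ and a uniformiser of $B$ is $N$-regular, so $\dpt{\Q_{Z}}N=\dpt{\Q_{X}}M+1=3=\dim Z$ and $N$ is maximal Cohen--Macaulay, as required.

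For (i): the map $\alpha$ of Theorem \ref{thm.square} is realised by blowing down, so the $Y$-component $\ms{Y}$ of $\tilde\eta$ is the blow-up in $N$ of the $X$-component of $\alpha(\tilde\eta)$; but $\beta(\tilde\eta)=\eta$ identifies $\ms{Y}$ with $W$ and the $X$-component with $Z$, whence $W=\mathrm{Bl}_{N}Z$. For (iii): if the flop is simple, $g^{-1}(0)$ is a single $\BB{P}^{1}$, the image of one curve $E_{i}\sbeq\tilde X$; thus $f$ contracts exactly $E_{i}$, so $M\cong M_{i}$ is the indecomposable Wunram module dual to $E_{i}$, and by the McKay--Wunram correspondence $\rk M_{i}$ is the multiplicity of $E_{i}$ in the fundamental cycle of $X$ --- which, by the classical description of simple threefold flops, is the length of the flop. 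Since rank is preserved under the flat deformation, $\rk N=\rk M_{i}$ equals the length.

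The remaining assertion (ii) is where I expect the real work to lie. The flop $W^{+}\ra Z$ exists, is unique, and is again a small partial resolution of $Z$, so applying the argument for (i) to $W^{+}$ (with the same generic $X$) produces a maximal Cohen--Macaulay $\Q_{Z}$-module $N'$ with $W^{+}=\mathrm{Bl}_{N'}Z$; the point is to identify $N'$ with the syzygy $N^{+}=\Syz_{\Q_{Z}}N$. Here I would exploit that $Z$, being terminal of index one, is a compound Du Val hence hypersurface singularity (Reid \cite{rei:83}): fixing a matrix factorisation $(\phi,\psi)$ of the defining equation with $N=\coker\phi$ and $N^{+}=\coker\psi$, one must show that passing from the small partial resolution $\mathrm{Bl}_{\coker\phi}Z$ to its flop interchanges the two maps $\phi$ and $\psi$ --- equivalently, that $\Q_{Z}\op N^{+}$ is obtained from the modifying module $\Q_{Z}\op N$ by mutating at its non-free part, so that blowing up in $N^{+}$ realises the module-theoretic description of the flop (Iyama--Wemyss \cite{iya/wem:11,wem:11}). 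Matching the birational operation of flopping with the algebraic operation of taking a syzygy, while simultaneously verifying that $\mathrm{Bl}_{N^{+}}Z$ is small and genuinely is the flop, is the substantive step; by contrast (i) and (iii) fall out formally once Theorems \ref{thm.MW} and \ref{thm.square} are in place. A safe fallback is to reduce, via the local classification of cDV singularities, to the explicit $c\mr{A}_{n}$, $c\mr{D}_{n}$ and $c\mr{E}$ normal forms, where both $\mathrm{Bl}_{N}Z$ and $\mathrm{Bl}_{N^{+}}Z$ can be written down and seen directly to be mutually flopping small resolutions.
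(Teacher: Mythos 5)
Your treatment of (i) and (iii) follows essentially the paper's own route to Theorem \ref{thm.flop} (i) and (iv): take a good hyperplane section, invoke \cite[1.14]{rei:83} and Theorem \ref{thm.MW} to write $f$ as $\Bl_{M}(X)\ra X$, use Theorem \ref{thm.square} together with Proposition \ref{prop.blowingup} to push the one-parameter deformation $g$ down to a pair $(Z,N)$ with $g\cong\Bl_{N}(Z)\ra Z$, check $N$ is MCM by the depth count, and identify $\rk N$ with the generic multiplicity of the exceptional fibre (the paper transfers this from $E(f)$ to $E(g)$ via Proposition \ref{prop.pic} (iv) rather than quoting the classical description of the length of a simple flop, but the content is the same).

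The genuine gap is in (ii), and you flag it yourself without closing it. The missing idea is the involution trick of Lemma \ref{lem.syz}: since $Z$ is an isolated cDV, hence (in characteristic $\neq 2$) a hypersurface of the form $z^{2}+G(x,y,t)$ deforming an RDP $z^{2}+d(x,y)$, the involution $\sigma_{0}\co z\mapsto -z$ extends to an involution $\sigma$ of $Z$, and by Kn\"orrer \cite[2.6 ii]{kno:87} one has $\sigma^{*}N\cong N^{+}$. Consequently $\Bl_{N^{+}}(Z)\ra Z$ is canonically identified with $\sigma\circ g$, i.e.\ the same space mapped down through $\sigma$, and the sign change converts the $g$-anti-ample divisor $-D$ into a divisor whose strict transform $D^{+}$ is $g^{+}$-ample; the paper verifies this honestly via Lemma \ref{lem.MJ} (identifying $g^{+}_{*}\Q_{W^{+}}(D^{+})$ with $\llb N^{+}\rrb$ and $\bigwedge^{\rk N}(g^{+})^{\tri}N^{+}$ with $\Q_{W^{+}}(D^{+})$) and gets uniqueness of the flop from \cite[6.2]{kol/mor:98}. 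Neither of your proposed substitutes fills this hole: the Iyama--Wemyss mutation route would itself require proving that taking a syzygy is the relevant mutation and that the blow-up realises it, which is a theorem you only state as a goal, and the fallback of checking the $c\mr{A}_{n}$, $c\mr{D}_{n}$, $c\mr{E}$ normal forms by hand is precisely the computational approach of Curto--Morrison that the paper points out becomes impractical already for the $\mr{E}$-types and for non-simple flops. Moreover, even granting an identification $W^{+}\cong\Bl_{N^{+}}(Z)$, you never verify that this blow-up satisfies the defining ampleness condition of the $D$-flop, which is the second half of the paper's argument for (ii). As written, (ii) is therefore not proved; with Lemma \ref{lem.syz} in hand it reduces to the short argument of Theorem \ref{thm.flop} (ii).
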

Theorem \ref{thm.CM} is a version of this statement for flat families of such small partial resolutions and flops. There is a family of pairs \((\bs{X},\bs{M})\) in \(\df{}{(X,M)}\) such that the blowing up of \(\bs{X}\) in \(\bs{M}\) and in the syzygy \(\bs{M}^{+}\) give two simultaneous partial resolutions \(\bs{Y}\ra\bs{X}\la\bs{Y}^{+}\) which induce any local family of flops of \(g\) by pullback, for any $g$ with hyperplane section $f$. By a result of S. Katz and Morrison, in the simple case the length \(l\) of the flop determines the generic hyperplane section \(X\) \cite{kat/mor:92}, see also \cite{kaw:94}. More precisely, \(X\) equals \(\mr{A}_{1},\mr{D}_{4},\mr{E}_{6},\mr{E}_{7},\mr{E}_{8}\) or \(\mr{E}_{8}\) for \(l=1,2,3,4,5\) or \(6\), respectively. By our result there is in each case a unique reflexive module \(M\) of rank \(l\) such that any simple flop of length \(l\) is obtained by pullback from the \(\bs{Y}\ra\bs{X}\la\bs{Y}^{+}\)
for the corresponding \((\bs{X},\bs{M})\). Hence \(\bs{Y}\ra\bs{X}\la\bs{Y}^{+}\) gives the `universal' simple flop of length \(l\) realised as blowing-ups in families of reflexive modules as suggested by Curto and Morrison; see Remark \ref{rem.uni}.

As an example consider \(\mr{A}_{1}\co x^{2}+yz\) which has a minimal versal family \(x^{2}+yz-u\). After the base change \(u\mapsto t^{2}\) it allows a simultaneous deformation of the minimal resolution and the resulting family is a small resolution of \(Z\co x^{2}+yz-t^{2}\) with exceptional fibre \(E\cong \BB{P}^{1}\); see M. F. Atiyah \cite[Thm. 2]{ati:58}. The only non-trivial indecomposable reflexive module \(M\) on \(\mr{A}_{1}\) extends to a module \(N\) on \(Z\) with presentation matrix 
\(\Phi=
\left(
\begin{smallmatrix}
x+t & y \\
-z & x-t
\end{smallmatrix}
\right)
\).
Blowing up \(Z\) in \(N\) gives the simultaneous resolution \(W\ra Z\) of the family. Blowing up \(Z\) in the syzygy \(N^{+}\hspace{-0.2em}\) gives the simple flop \(W^{+}\hspace{-0.3em}\ra Z\) of length one. The presentation matrix of \(N^{+}\) is the adjoint \(\Psi\) of \(\Phi\) and the pair makes a matrix factorisation of the hypersurface \(Z\). The RDPs are hypersurfaces and any maximal Cohen-Macaulay module is given by a matrix factorisation \cite{eis:80}. Curto and Morrison phrase their conjectures in terms of matrix factorisations (and for simple flops) and verify them for the \(\mr{A}_{n}\) and \(\mr{D}_{n}\) by extensive calculations. The higher ranks of the indecomposable modules for the \(E_{6-8}\) makes this approach difficult, and for the non-simple flops practically impossible. Our argument is conceptual and does not rely on computations. The coordinate-free formulation of Theorems \ref{thm.flop} and \ref{thm.CM} makes the conjectures more transparent and accessible; see Remark \ref{rem.CM}. By a result of O. Villamayor U. generators for the blowing-up ideal are readily obtained from a presentation of the module \cite{vil:06}, cf. comments below \eqref{eq.OZ}. The singularities we work with are henselisations of finite type algebras and the results will therefore have finite type representations locally in the \'etale topology.

In recent years there has been a lot of research linking properties of various noncommutative algebras and the flops, e.g. notably the description by W.\ Donovan and Wemyss of the Bridgeland-Chen autoequivalence in terms of the universal family of a non-commutative deformation functor \cite{don/wem:16}. 
J. Karmazyn \cite{kar:17} reconstructs the small partial resolution and its flop by a quiver GIT-construction where the input is endomorphism algebras. Wemyss \cite{wem:18} contains many general results describing flops and minimal models of singularities (e.g. for cDVs) in homological terms. In particular he describes flops in terms of mutations, with applications to the GIT chamber structure.
We offer on the other hand a direct proof of the original Curto-Morrison conjectures using deformation theory where the blowing-up ideal for the small, partial resolution is obtained directly from the (parametrised) \(2\)-dimensional Wunram module.
Moreover, any flop with fixed RDP hyperplane section and Dynkin diagram is a pullback from a pair of such `universal' blowing-ups. We also believe that the geometric techniques used in this article may be useful in the study of more general contractions. See Remarks \ref{rem.W} and \ref{rem.VdB}.

The inventory of the article is as follows. 
In Section \ref{sec.prel} we give preliminary results concerning rational surface singularities, blowing-up in coherent sheaves, strict transforms on partial resolutions and their Chern classes and a cohomology and base change result suited to our needs. 
In Section \ref{sec.def} we define the deformation functors. We also give a result which implies the compatibility of blowing-up in a family of modules with base change. 
In Section \ref{sec.MW} we prove a result concerning the fractional ideal which defines the blowing-up,  normality of blowing-up, and the blowing-up version of the McKay-Wunram correspondence. 
In Section \ref{sec.square} we prove the main theorem through several intermediate steps. Existence of versal base spaces and a classical result of Lipman follows. There is also an example (the fundamental module). The article ends in Section \ref{sec.flops} with our treatment of the Curto-Morrison conjectures.
\subsection*{Acknowledgement}
Part of this work was done during the first author's most pleasant stay at Northeastern University 2013/14. 

The authors thank the referee for a detailed and helpful report.
\section{Preliminaries}\label{sec.prel}
\subsection{Partial resolutions of rational surface singularities}
Fix an algebraically closed field \(k\). All schemes and maps are assumed to be above \(\Spec k\) and all schemes are assumed to be noetherian.
\begin{defn}
A \emph{singularity} is an affine scheme \(X=\Spec A\) where \(A\) is algebraic (the henselisation of a finite type \(k\)-algebra in a maximal ideal). A \emph{partial resolution} of \(X\) is a proper birational map \(f\co Y\ra X\) with \(Y\) normal. If \(Y\) is regular, \(f\) is a resolution. Let \(E(f)\subset Y\) denote the (non-reduced) closed fibre of \(f\) and let \(\varSigma(f)\) denote the exceptional set of \(f\); the minimal closed subset of \(Y\) such that \(f\) restricted to its complement is an isomorphism. A partial resolution \(f\) is \emph{small} if \(\varSigma(f)\) does not contain any divisorial components.
 
If \(A\) furthermore is a normal domain of dimension two, \(X\) is called a \emph{normal surface singularity}. Moreover, \(X\) is a \emph{rational} surface singularity if there is a resolution \(f\) such that \(\mr{R}^{1}\hspace{-0.2em}f_{*}\Q_{Y}=0\); \cite{art:66}. A rational surface singularity which is a double point is called a \emph{rational double point} (RDP). 
\end{defn}
A normal surface singularity is an RDP if and only if it is a Gorenstein rational surface singularity; cf.\ \cite[4.19]{bad:01}.
RDP is also equivalent to Du Val as defined in \cite[4.4]{kol/mor:98}; cf.\ \cite[3.31, 4.1]{bad:01}. 
A finite module on a normal surface singularity is reflexive if and only if it is maximal Cohen-Macaulay (MCM). 

A fundamental reference for the following results is Lipman \cite{lip:69}. Proposition \ref{prop.modific} will be used without further mentioning.
\begin{prop}[{\cite[4.1, 27.1]{lip:69}}]\label{prop.modific}
Let \(X\) be a rational surface singularity and \(f\co Y\ra X\) a partial resolution\textup{.} Let \(\{E_{i}\}_{i\in I}\) denote the prime components of \(E(f)\)\textup{.} There is a \emph{minimal resolution of singularities} \(\pi\co \tilde{X}\ra X\) \textup{(}independent of \(f\)\textup{)} such that\textup{:}
\begin{enumerate}[leftmargin=2.4em, label=\textup{(\roman*)}, start=1]
\item\textup{(Minimality)} If \(f\) is a resolution of singularities then there exists a unique map \(g\co Y\ra \tilde{X}\) such that \(f=\pi g\)\textup{.} 
\item\textup{(Singularities)} \(Y\) has only rational surface singularities\textup{.} If \(X\) is an RDP then \(Y\) has only RDP singularities\textup{.}
\item \textup{(Contracting exceptional curves)} 
For any subset \(J\sbeq I\) there exists a unique partial resolution \(g\co Y_{\hspace{-0.09em}J}\ra X\) and map \(h\co Y\ra Y_{\hspace{-0.09em}J}\) with \(f=gh\) such that \(g\) contracts exactly the curves \(\{E_{i}\}_{i\in I\setminus J}\)\textup{.} 
\end{enumerate}
\end{prop}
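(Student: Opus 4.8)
The statement collects Lipman's classical structure theory of normal surface singularities \cite{lip:69}, and I would reprove it by assembling four standard tools: the negative-definiteness (Mumford) of the intersection form of a compact exceptional configuration on a normal surface; Castelnuovo's criterion that a smooth rational $(-1)$-curve on a regular surface is contractible; the factorisation of a birational morphism of regular surfaces into point blow-ups; and the Grauert--Artin contractibility criterion for the (rational) configurations arising here. The three parts are then handled in order: construct $\pi$ and prove its universal property; deduce the singularities of $Y$; and realise the partial contractions.

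For (i), I start from any resolution of $X$; by Mumford's theorem its exceptional locus is a negative-definite configuration with finitely many prime components, and as long as one of them is a $(-1)$-curve I contract it (Castelnuovo), obtaining a resolution of $X$ with strictly fewer exceptional components. This terminates at a resolution $\tilde X$ with no $(-1)$-curve in its exceptional locus. Given any resolution $f\co Y\ra X$, I resolve the birational map $Y\dra\tilde X$ by a regular modification $Z$ chosen minimally, so that no prime divisor is contracted by both $Z\ra Y$ and $Z\ra\tilde X$. If $Z\ra Y$ were not an isomorphism it would contract a $(-1)$-curve $C$; minimality forbids $Z\ra\tilde X$ from contracting $C$, so the image of $C$ in $\tilde X$ is a curve, necessarily exceptional over $X$ and hence of negative self-intersection; but a self-intersection comparison along the blow-ups making up $Z\ra\tilde X$ pins that image down to a $(-1)$-curve in the exceptional locus of $\tilde X$, which is absurd. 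Thus $Z=Y$ and the desired morphism $g=(Y\ra\tilde X)$ exists; it is unique because it is prescribed on a dense open, and uniqueness of $\tilde X$ itself follows by running the argument between two competing minimal resolutions.

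For (ii), rationality of $Y$ is cohomological: pick a resolution $\rho\co\tilde Y\ra Y$, so that $f\rho\co\tilde Y\ra X$ is a resolution of $X$ and $\mr{R}^{1}(f\rho)_{*}\Q_{\tilde Y}=0$; since the fibres of $f$ have dimension at most one, $\mr{R}^{2}f_{*}$ vanishes, and the five-term exact sequence of the composite collapses to $f_{*}\mr{R}^{1}\rho_{*}\Q_{\tilde Y}=0$; as $\mr{R}^{1}\rho_{*}\Q_{\tilde Y}$ is a coherent sheaf supported at the finitely many non-regular points of $Y$, it must itself vanish, i.e. $Y$ has rational singularities. When $X$ is an RDP the minimal resolution is crepant, so by adjunction every prime component of $E(\pi)$ is a smooth rational curve of self-intersection $-2$; following Lipman's analysis, $Y$ is then obtained from $\tilde X$ by contracting a negative-definite subconfiguration of these $(-2)$-curves, and contracting any such configuration produces exactly an RDP by the ADE classification, giving the second assertion of (ii).

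For (iii), the prime components of $E(f)$ indexed by $I\setminus J$ form a subconfiguration whose intersection matrix is a principal submatrix of the negative-definite matrix of $E(f)$, hence negative-definite; the configuration is rational, so the Grauert--Artin criterion yields a proper birational contraction $h\co Y\ra Y_{J}$ with $Y_{J}$ normal contracting exactly those curves. Because $f$ already contracts these curves it is constant on the fibres of $h$, so the identity $h_{*}\Q_{Y}=\Q_{Y_{J}}$ lets $f$ factor as $f=gh$ with $g\co Y_{J}\ra X$ proper and birational, hence a partial resolution; uniqueness of $(Y_{J},g,h)$ is forced by the uniqueness in the contraction and by a morphism to a separated scheme being determined on a dense open. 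The step I expect to be most delicate is the self-intersection bookkeeping in part (i) --- converting the ``no exceptional $(-1)$-curve'' property of $\tilde X$ into the factorisation property --- and, relatedly, the structural input needed for the RDP half of (ii), that the partial resolutions in play are contractions of subconfigurations of $E(\pi)$; the contractibility and crepancy ingredients are comparatively formal once the relevant configurations are recognised as rational.
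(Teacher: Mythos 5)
Your route is necessarily different from the paper's, because the paper offers no argument at all: its ``proof'' is the citation to Lipman \cite[4.1, 1.2, 27.1 and the Remarks on p.~275]{lip:69}. Your reconstruction of (i) (terminate Castelnuovo contractions using negative definiteness, then the minimal common resolution $Z$ and the self-intersection bookkeeping forcing a $(-1)$-curve inside $E(\pi)$), of the rationality half of (ii) (Leray for $f\circ\rho$, $\mathrm{R}^{2}f_{*}=0$ by fibre dimension, and faithfulness of $f_{*}$ on sheaves with point support), and of (iii) (negative definiteness of the subconfiguration, contractibility, and factorisation of $f$ through $h$ via $h_{*}\mathcal{O}_{Y}=\mathcal{O}_{Y_{J}}$) is the standard argument and is sound as a sketch. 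Two points you leave implicit are worth a sentence each: the input $\mathrm{R}^{1}(f\rho)_{*}\mathcal{O}_{\widetilde Y}=0$ uses that rationality is independent of the chosen resolution (which follows from (i) together with the factorisation of a birational morphism of regular surfaces into point blow-ups), and in (iii) Artin's contractibility criterion is stated on a regular surface, so on the possibly singular $Y$ you should either invoke Mumford's intersection theory plus a normal-surface version of the criterion, or contract on a resolution of $Y$ and descend using the universal property of $\rho_{*}\mathcal{O}=\mathcal{O}_{Y}$.

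The genuine gap is in the RDP half of (ii). You assert that ``$Y$ is then obtained from $\widetilde X$ by contracting a negative-definite subconfiguration of these $(-2)$-curves'', i.e.\ that the minimal resolution dominates $Y$. Nothing in your argument, nor in the paper's definition of a partial resolution (proper birational with $Y$ normal), gives this, and it fails in that generality: let $X$ be of type $\mathrm{A}_{1}$, blow up a closed point on the $(-2)$-curve of $\widetilde X$ to get $Z\to X$ with exceptional curves a $(-1)$-curve and a $(-3)$-curve, and then (using part (iii) itself, applied to the resolution $Z\to X$) contract only the $(-3)$-curve. The result is a normal partial resolution $Y\to X$ whose singular point has reduced fundamental cycle of self-intersection $-3$, hence multiplicity $3$ (it is the cyclic quotient $\tfrac13(1,1)$): a rational but non-Gorenstein point, so not an RDP. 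Thus the second sentence of (ii) as literally stated needs the extra hypothesis that $\widetilde X\to X$ factors through $Y$ (equivalently, that $Y$ is a contraction of a subconfiguration of $E(\pi)$), which is exactly the situation in which the proposition is applied in the paper --- blowing up reflexive modules, the RDP-resolution, and Reid's hyperplane-section partial resolutions are all dominated by the minimal resolution. With that hypothesis your ADE contraction argument does close the case; without it the assertion itself, not just your proof, breaks down, so you should either add the domination hypothesis or prove it in the situations where you use the statement.
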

\begin{proof}
For the minimal resolution, (i) and (ii) see \cite[4.1 and 1.2]{lip:69}.
For (iii) see 27.1 and \emph{Remarks} p. 275 in \cite{lip:69}.   
\end{proof}
\begin{prop}\label{prop.pic}
Let \(X\) be a normal singularity of dimension at least two and suppose \(f\co Y\ra X\) is a partial resolution\textup{.}
Let \(\{E_{i}\}_{i\in I}\) denote the prime components of \(E(f)\)\textup{.} Assume \(\dim E_{i}=1\) for all \(i\in I\) and \(\mr{R}^{1}\hspace{-0.1em}f_{*}\Q_{Y}=0\)\textup{.} Then\textup{:} 
\begin{enumerate}[leftmargin=2.4em, label=\textup{(\roman*)}, start=1]
\item \({E}_{j}\cong\BB{P}^{1}\) for all \(j\), the intersections are transversal and \(E(f)\) contains no embedded components\textup{.}
\item \textup{(Intersection numbers)}
Let \(\ms{L}\) be an invertible sheaf on \(Y\) and \(C\in\{{E}_{i}\}_{i\in I}\)\textup{.} Put \(\ms{L}\hspace{-0.1em}.\hspace{0.05em}C=\mr{deg}_{C}(\ms{L}\ot\Q_{C})\)\textup{;} cf. \cite[\S 10-11]{lip:69}\textup{.} Then\textup{:}  
\begin{enumerate}[leftmargin=2.4em, label=\textup{(\alph*)}]
\item \(\ms{L}\cong\Q_{Y}\) if and only if \(\ms{L}\hspace{-0.1em}.\hspace{0.05em}C=0\) for all \(C\in\{{E}_{i}\}_{i\in I}\)\textup{.}
\item \(\ms{L}\) is generated by its global sections if and only if \(\ms{L}\hspace{-0.1em}.\hspace{0.05em}C\geq 0\) for all \(C\in\{{E}_{i}\}_{i\in I}\)\textup{.} In that case \(\mr{R}^{1}\hspace{-0.15em}f_{*}\ms{L}=0\)\textup{.}
\item \(\ms{L}\) is ample if and only if \(\ms{L}\hspace{-0.1em}.\hspace{0.05em}C>0\) for all \(C\in\{{E}_{i}\}_{i\in I}\)\textup{.} In that case \(\ms{L}\) is very ample for \(f\)\textup{.} 
\end{enumerate}
\item \textup{(The Picard group)} 
For each \(i\in I\) there is an effective prime Cartier divisor \({D}_{i}\) 
which intersects \(\cup_{i\in I}{E}_{i}\) transversally in a point contained in \({E}_{i}\)\textup{.} Moreover\textup{,} \(\{{D}_{i}\}_{i\in I}\) gives a \(\BB{Z}\)-basis for \(\Pic (Y)\)\textup{.}
\item \textup{(Hyperplane sections)}  
Assume \(f\) is small
and \(\dim X\geq 3\)\textup{.} 
Let \(g\co H'\ra H\) denote the strict transform along \(f\) of a hyperplane section \(H\subset X\) defined by a non-zero-divisor \(u\)\textup{.} Assume that \(H\) and \(H'\) are normal\textup{.} Then  
the restriction map \(\Pic(Y)\ra \Pic(H')\) is an isomorphism\textup{.}
Moreover\textup{,}
\begin{equation*}
\Q(D_{i}).E(f)=\Q(D_{i}\cap H').E(g)\textup{.}
\end{equation*} 
\end{enumerate}
\end{prop}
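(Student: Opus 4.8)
\emph{Overview.} The plan is to recall parts (i)--(iii) for surface singularities from Artin and Lipman \cite{art:66,lip:69}, to prove (iv) by comparing \(Y\) with the strict transform \(H'\) through the theorem on formal functions, and then to bootstrap (i)--(iii) to the three-dimensional case by means of (iv). Throughout, \(A\) is local (henselian), so Zariski's main theorem gives \(f_{*}\Q_{Y}=\Q_{X}\), the closed fibre is connected, \(Y\) is integral, and every nonzero regular function is a non-zero-divisor on \(\Q_{Y}\).

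For (i)--(iii) in the surface case I would argue as follows. The theorem on formal functions turns \(\mr{R}^{1}f_{*}\Q_{Y}=0\) into vanishing of \(H^{1}\) of the structure sheaf of every infinitesimal thickening of the closed fibre; a connected reduced proper curve over \(k\) with \(H^{1}(\Q)=0\) has arithmetic genus zero and hence is a tree of lines meeting transversally, while \(E(f)=f^{-1}(\fr m)\) has no embedded components -- I would quote these from \cite[\S 12, \S 14]{lip:69}. In (ii) the ``only if'' directions are immediate by restriction to the \(E_{j}\cong\BB{P}^{1}\); for the converse, a nef \(\ms{L}\) satisfies \(\mr{R}^{1}f_{*}\ms{L}=0\) and is globally generated -- the rationality argument, filtering the thickened fibre by invertible sheaves of non-negative degree on its \(\BB{P}^{1}\)-components -- whence (a) follows by applying this to \(\ms{L}\) and \(\ms{L}^{-1}\), and (c) follows because the \(E_{i}\) span the relative curve classes (with very ampleness coming from global generation of powers and one-dimensionality of the fibres). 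In (iii), (a) gives injectivity of \(\ms{L}\mapsto(\ms{L}.E_{i})_{i}\); surjectivity is Lipman's dual-basis construction, and \(D_{i}\) is a general member (Bertini) of the base-point-free linear system of the invertible sheaf \(L_{i}\) with \(L_{i}.E_{j}=\delta_{ij}\), which by the degree condition meets \(\bigcup_{j}E_{j}\) transversally in one point, necessarily on \(E_{i}\).

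To prove (iv) I would first note that \(H=\{u=0\}\) is a nonempty closed subscheme of the local scheme \(X\), so \(u\in\fr m\) and \(f^{*}u\in\fr m\Q_{Y}\); hence \(E(f)=V(\fr m\Q_{Y})\sbeq V(f^{*}u)=f^{-1}(H)\), and since \(f\) is small it contracts no divisor while \(H\) is irreducible, so \(f^{-1}(H)=H'\). Consequently \(\varSigma(f)=\varSigma(g)\), every \(E_{i}\) lies on \(H'\), and \(E(g)=V(\fr m\Q_{H'})=V(\fr m\Q_{Y})=E(f)\) as closed subschemes. Next, \(0\ra\Q_{Y}\xra{u}\Q_{Y}\ra\Q_{H'}\ra 0\) together with \(\mr{R}^{2}f_{*}\Q_{Y}=0\) (all fibres have dimension \(\leq 1\)) gives \(\mr{R}^{1}g_{*}\Q_{H'}=0\), so \(g\co H'\ra H\) is again in the situation of the proposition and the surface case applies to it. For the Picard groups, \(X\) henselian and \(f\) proper give \(\Pic(Y)\cong\varprojlim_{n}\Pic(Y_{n})\) (theorem on formal functions and Grothendieck's existence theorem), and the vanishing of \(H^{1}\) of the ideals of the thickenings \(Y_{n}\) collapses each \(\Pic(Y_{n})\) onto \(\Pic(\bigcup_{i}E_{i})\); running the same computation for \(H'\) and using \(E(f)=E(g)\) identifies \(\Pic(Y)\xra{\sim}\Pic(H')\) with the restriction map. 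The displayed formula is then immediate: since \(\bigcup_{i}E_{i}\sbeq H'\) and \(\Q_{H'}(D_{i}\cap H')=\Q_{Y}(D_{i})\ot\Q_{H'}\), both sides equal \(\mr{deg}\big(\Q_{Y}(D_{i})\ot\Q_{E(f)}\big)\).

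Finally, for \(\dim X=3\): (i) holds since the formal-functions argument only sees the one-dimensional fibre and the absence of embedded components follows from (iv) (as \(E(f)=E(g)\) sits on the surface \(H'\)); (ii) holds verbatim since its dévissage takes place on the one-dimensional thickened fibre; and for (iii), injectivity is (a), while \(\Pic(Y)\cong\Pic(H')=\BB{Z}^{I}\) by (iv) and the surface case, compatibly with \(\ms{L}\mapsto(\ms{L}.E_{i})\), so the dual-basis classes exist in \(\Pic(Y)\) and are represented by effective prime divisors \(D_{i}\) via (ii) and Bertini. The step I expect to be the real work is the Picard comparison in (iv) -- passing to the completion, invoking Grothendieck existence, and running the dévissage of the thickenings so that \(\Pic(Y)\) and \(\Pic(H')\) are both computed by \(\Pic(\bigcup_{i}E_{i})\) with compatible restriction maps; the remaining steps of (iv) (the identification \(f^{-1}(H)=H'\), the vanishing \(\mr{R}^{1}g_{*}\Q_{H'}=0\), and the matching of the non-reduced fibres) and parts (i)--(iii) themselves are routine or quotable.
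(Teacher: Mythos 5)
Your overall strategy can be made to work in the cases it reaches, but it inverts the paper's logic, and this is where the genuine gaps lie. The paper's key idea, which your proposal is missing, is a direct construction of the divisors \(D_{i}\) valid in every dimension \(\geq 2\), with no smallness or hyperplane-section hypotheses (it imitates Lipman's 14.3 in the henselian setting): choose a closed point \(y\in E_{i}\smallsetminus\cup_{j\neq i}E_{j}\), lift a generator \(\bar{t}\) of the maximal ideal of \(\Q_{E_{i},y}\) to \(t\in\Q_{Y,y}\), and use that \(X\) is henselian to split the principal divisor \((t)=D_{i}+D_{i}'\) with \(D_{i}\cap(\cup E_{j})=\{y\}\) and \(y\notin D_{i}'\); the existence of the \(D_{i}\) gives surjectivity of \(\ms{L}\mapsto(\ms{L}.E_{i})_{i}\) and (ii)(a) gives injectivity. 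This proves (iii) outright, and then (iv) takes two lines: since \(f\) is small the strict transform equals the total transform, \((u,t)\) is \(\Q_{Y,y}\)-regular, so \(D_{i}\cap H'\) serves as the standard divisor of (iii) for \(g\co H'\ra H\); hence the restriction map carries the \(\BB{Z}\)-basis \(\{\Q_{Y}(D_{i})\}\) of \(\Pic(Y)\) to a basis of \(\Pic(H')\), and the displayed formula follows from \(\Q_{E(f),y}\cong\Q_{E(g),y}\). No formal functions, algebraization or Bertini argument is needed.

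By contrast, your route has two concrete problems. First, the Picard comparison you yourself flag as ``the real work'' is not only undone but harder than indicated: Grothendieck existence requires a complete base, so over the henselian \(X\) you must in addition identify \(\Pic(Y)\) with the Picard group of the base change to the completion (an approximation/descent argument), and your asserted collapse of each \(\Pic(Y_{n})\) onto \(\Pic(\cup E_{i})\) needs \(\cH^{1}\)-vanishing of the graded pieces \(\fr{m}^{n}\Q_{Y}/\fr{m}^{n+1}\Q_{Y}\), which is not a hypothesis and is not available for the threefold \(Y\) without a separate rationality argument; only \(\cH^{2}=0\) comes for free, giving surjectivity onto \(\Pic(\cup E_{i})\), so at best you would have to reorganize the argument through (ii)(a) to get injectivity, with the algebraization-and-descent step still outstanding. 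Second, even granting (iv), your bootstrap yields (iii) (and the no-embedded-components part of (i)) only when \(f\) is small, \(\dim X=3\), and a hyperplane section \(H\) with both \(H\) and \(H'\) normal exists; none of these is a hypothesis of (iii), whose statement covers any \(\dim X\geq 2\) and any partial resolution with one-dimensional closed fibre, and producing such an \(H\) is itself a nontrivial input (in the paper's applications it comes from Reid's theorem for cDV points) that cannot be assumed inside the proposition. Your treatment of the surface case and of (i)--(ii) by restriction to the curves does match the paper, which quotes Lipman 12.1 for (ii) and argues (i) directly from \(\mr{R}^{1}\hspace{-0.1em}f_{*}\Q_{Y}=0\) and \(f_{*}\Q_{Y}=\Q_{X}\).
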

\begin{proof}
(i) Note that \(0=\mr{R}^{1}\hspace{-0.1em}f_{*}\Q_{Y}\thr\mr{R}^{1}\hspace{-0.1em}f_{*}\Q_{C}\)
for all subschemes \(C\) with support in \(\cup {E}_{j}\). It follows that \(p_{\tn{a}}({E}_{j})=0\) (which implies \({E}_{j}\cong\BB{P}^{1}\)) and that the intersections are transversal. Since \(f_{*}\Q_{Y}\thr f_{*}\Q_{E(f)}\) and \(f_{*}\Q_{Y}=\Q_{X}\) by \cite[\href{https://stacks.math.columbia.edu/tag/0AY8}{Lemma 0AY8}]{SP},   it follows that \(\cH^{0}(\Q_{E(f)})\cong k\) and \(E(f)\) cannot have embedded components.
(ii) is \cite[12.1]{lip:69}.

(iii) We imitate the proof of \cite[14.3]{lip:69}. Let \(y\in {E}_{i}\smallsetminus \cup_{j\neq  i}{E}_{j}\) be a closed point and \(\bar{t}\) a generator for the maximal ideal in \(\Q_{{E}_{i},y}\). Let 
\(t\in\Q_{Y,y}\) be a lifting of \(\bar{t}\).  
One may assume that no \(E_{j}\) is a component of the principal Cartier divisor \((t)\). Put \((t)=D_{i}+D'_{i}\) where \(D_{i}\cap(\cup E_{j})=\{y\}\) and \(y\notin D_{i}'\) (use that \(X\) is henselian).  
There is a map \(\theta\co\Pic(Y)\ra \hm{}{\BB{Z}}{\oplus_{i} \BB{Z}E_{i}}{\BB{Z}}\) given by \(\ms{L}\mapsto (\ms{L}.-)\). The existence of \(D_{i}\) shows surjectivity of \(\theta\) and (ii) shows injectivity.

(iv) Note that the strict transform equals the total transform. In particular, \(\{E_{i}\}_{i\in I}\) are the prime components of \(g^{-1}(x)\). The sequence \((u,t)\) is \(\Q_{Y,y}\)-regular. It implies that the standard Cartier divisor in \(\Pic (H')\) given in (iii) corresponding to the prime component \(E_{i}\) can be taken to be \(D_{i}\cap H'\). Since \(\Q_{E(f),y}\cong \Q_{E(g),y}\) the moreover part follows.
\end{proof}
\begin{rem} 
Note in (iii) that a Cartier divisor \(D\) which intersects \(\cup_{i\in I}{E}_{i}\)  transversally is contained in any open \(U\sbeq Y\) which containes the intersection points.
\end{rem}
\subsection{Blowing up in coherent sheaves}
Let \(X\) be a scheme, \(i\co U\ra X\) a non-empty open subscheme with complement \(Z\), and \(\ms{F}\) a quasi-coherent \(\Q_{X}\)-module. 
Suppose \(f\co Y\ra X\) is a scheme map such that the restriction \(f_{U}\) of \(f\) to \(f^{-1}(U)\) is an isomorphism \(f^{-1}(U)\cong U\). Let \(j\co f^{-1}(U)\ra Y\) denote the open inclusion. Define the \emph{\(Z\)-strict transform of \(\ms{F}\) along \(f\)} to be the image of the natural restriction map \(f^{*}\ms{F}\ra j_{*}f_{U}^{*}(\ms{F}_{|U})\) -- a quasi-coherent \(\Q_{Y}\)-module  
denoted \(f^{\tri}_{Z}\ms{F}\). The kernel of the restriction map is the subsheaf \(\ms{H}^{0}_{f^{-1}Z}(f^{*}\ms{F})\) of sections with support in \(f^{-1}(Z)\). Let \(U'\sbeq X\) be another open subscheme with \(f^{-1}(U')\cong U'\) and suppose \(\ms{F}_{|U\cup U'}\) is locally free and both \(f^{-1}(U)\) and \(f^{-1}(U')\) are dense in \(Y\). Then \(f^{\tri}_{Z}\ms{F}\cong f^{\tri}_{Z'}\ms{F}\). We use the simplified notation \(f^{\tri}\ms{F}\) for the maximal such \(U\) and call it the strict transform. If $Y$ is integral then $f^{-1}Z$ does not contain the generic point of $Y$ and all local sections of $\ms{H}^{0}_{f^{-1}Z}(f^{*}\ms{F})$ are torsion. If $\ms{F}_{\vert U}$ is locally free (as in the applications below), then all torsion local sections in $f^{*}\ms{F}$ have support in $f^{-1}Z$ since a locally free sheaf has no torsion; i.e. $\ms{H}^{0}_{f^{-1}Z}(f^{*}\ms{F})=(f^{*}\ms{F})_\tn{tors}$.

The following is a special case of Gruson and Raynaud's theorem on flattening blowing-up (with the universal property); cf.\ \cite[5.2.2]{ray/gru:71}.
\begin{prop}\label{prop.blowup}
Suppose \(X\) is a scheme\textup{,} \(U\) an open subscheme of \(X\) and \(\ms{F}\) a coherent \(\Q_{X}\)-module such that \(\ms{F}_{|U}\) is locally free\textup{.} Put \(Z=X\setminus U\)\textup{.} Then there is a projective scheme map \(f\co Y\ra X\)  
which is universal with respect to the following properties for a scheme map \(f'\co Y'\ra X\)\textup{.} 
\begin{enumerate}[leftmargin=2.4em, label=\textup{(\roman*)}]
\item The restriction \(f'_{U}\) is an isomorphism and \(f'^{-1}(U)\) is dense in \(Y'\)\textup{.}
\item The \(Z\)-strict transform \(f'^{\tri}_{Z}\ms{F}\) is locally free on \(Y'\)\textup{.} 
\end{enumerate}
\end{prop}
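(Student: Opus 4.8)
The plan is to obtain $f$ from Gruson and Raynaud's flattening-by-blowing-up theorem and then to translate its universal property into the present formulation. Since $\ms{F}_{|U}$ is locally free it is in particular flat over $U$, and $X$ is noetherian, so I would invoke \cite[5.2.2]{ray/gru:71} — with $X$ itself as the base and $\ms{F}$ as the sheaf to be flattened — to produce a $U$-admissible blowing-up $f\co Y\ra X$, i.e.\ the blow-up of a coherent ideal $\ms{I}\sbeq\Q_{X}$ with $V(\ms{I})\sbeq Z$, which is universal among scheme maps $f'\co Y'\ra X$ satisfying property (i) for which the strict transform of $\ms{F}$ is $\Q_{Y'}$-flat. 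Because $\ms{I}$ is coherent and $X$ noetherian, $Y=\Proj$ of the Rees algebra $\bigoplus_{n\geq 0}\ms{I}^{n}$ is projective over $X$; the center lies in $Z$, so $f_{U}$ is an isomorphism, and when $U$ is dense in $X$ — the case relevant here — $V(\ms{I})$ is nowhere dense, so $f^{-1}(U)$ is dense in $Y$.

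Next I would upgrade ``flat'' to ``locally free''. By \cite[5.2.2]{ray/gru:71} the strict transform of $\ms{F}$ occurring there is $\Q_{Y}$-flat; being a quotient of the coherent sheaf $f^{*}\ms{F}$ it is coherent, hence locally free over the noetherian scheme $Y$. A short argument — a locally free sheaf has no nonzero subsheaf supported on the nowhere-dense exceptional locus $f^{-1}(Z)$ — then identifies it with $f^{\tri}_{Z}\ms{F}$, so that $f$ itself satisfies (i) and (ii). For the universal property, let $f'\co Y'\ra X$ satisfy (i) and (ii); then $f'^{\tri}_{Z}\ms{F}$ is locally free, in particular $\Q_{Y'}$-flat, so \cite[5.2.2]{ray/gru:71} provides a unique $X$-morphism $g\co Y'\ra Y$. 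It then remains to check that $g$ carries the strict transform to the strict transform, i.e.\ $g^{*}(f^{\tri}_{Z}\ms{F})\cong f'^{\tri}_{Z}\ms{F}$: both are locally free quotients of $f'^{*}\ms{F}$ agreeing over the dense open $f'^{-1}(U)$, and the kernel of each is the largest subsheaf of $f'^{*}\ms{F}$ supported on $f'^{-1}(Z)$, whence the two quotients coincide.

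The real content is imported wholesale from \cite[5.2.2]{ray/gru:71}, whose proof runs by noetherian induction down to a local, module-theoretic flattening statement; I would not reprove it. The main obstacle is therefore not the existence argument but the bookkeeping: one must verify that the $Z$-strict transform of the present statement coincides with the strict transform used in \cite{ray/gru:71}, and that over the noetherian base $Y$ ``coherent and flat'' is the same as ``locally free'' — this is precisely what makes the two universal properties (flat strict transform versus locally free strict transform) share the same solution.
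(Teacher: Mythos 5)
Your existence bookkeeping is sound and matches how the paper reads \cite[5.2.2]{ray/gru:71}: applying the flattening theorem with the identity of \(X\) as the morphism gives a \(U\)-admissible blowing-up whose strict transform of \(\ms{F}\) is flat, coherent-plus-flat is locally free on a noetherian scheme, blowing up a coherent ideal is projective, and the identification of the flat strict transform with \(f^{\tri}_{Z}\ms{F}\) is routine. The gap is in the universal property, which is the actual content of the proposition (and what gets used later, e.g.\ in Corollary \ref{cor.blowup} and Proposition \ref{prop.blowingup}). You write that \cite[5.2.2]{ray/gru:71} ``provides a unique \(X\)-morphism \(g\co Y'\ra Y\)'' for every \(f'\) satisfying (i)--(ii); but 5.2.2 is purely an existence statement (\emph{some} \(U\)-admissible blowing-up flattens the strict transform) and asserts no universal property of the blowing-up it produces. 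Nor can universality come for free from existence: if \(Y\ra X\) is one \(U\)-admissible blowing-up satisfying (i)--(ii), any further \(U\)-admissible blowing-up \(Y''\ra Y\ra X\) again satisfies (i)--(ii), yet \(Y\) does not factor through \(Y''\); so universality singles out a specific minimal model and requires its own argument.

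The paper obtains exactly this from the construction its proof points to: \(Y\) is the scheme-theoretic closed image of the canonical section \(U\ra \Quot_{\ms{F}/X/X}\) determined by the locally free quotient \(\ms{F}_{|U}\) (for the identity morphism this Quot is a Grassmannian-type scheme of locally free quotients of \(\ms{F}\), projective over \(X\), which also gives projectivity of \(f\) without passing through a Rees algebra). Given \(f'\co Y'\ra X\) with (i)--(ii), the locally free quotient \(f'^{*}\ms{F}\ra f'^{\tri}_{Z}\ms{F}\) defines an \(X\)-map \(Y'\ra \Quot_{\ms{F}/X/X}\) agreeing with the section over \(f'^{-1}(U)\), and it factors uniquely through the closed image \(Y\) because \(f'^{-1}(U)\) is scheme-theoretically dense in \(Y'\); here the topological density in (i) must be upgraded, and (ii) does this, since the ideal of sections of \(\Q_{Y'}\) supported in \(f'^{-1}(Z)\) annihilates \(f'^{\tri}_{Z}\ms{F}\) and hence vanishes when that sheaf is locally free of positive rank. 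This closure-in-Quot step (equivalently, blowing up the fractional ideal \(\llb\ms{F}\rrb\) of \eqref{eq.OZ}, as in \cite{one/zat:91,vil:06}) is the missing idea in your proposal; without it the citation does not carry the universal property, and your final paragraph's ``bookkeeping'' framing understates where the real work lies. Your restriction to \(U\) dense in \(X\) is also unnecessary in this construction, since \(f^{-1}(U)\) is dense in the closure of the image of \(U\) by definition.
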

The proof realises \(Y\) as the scheme-theoretic closed image (so possibly with non-reduced structure; \cite[9.5]{EGAI}) of a map from \(U\) to the scheme of quotients \(\Quot_{\ms{F}/X/X}\); see \cite[\S 5.2]{ray/gru:71}.
Denote \(Y\) by \(\Bl_{Z,\ms{F}}(X)\).
Let \(U'\sbeq X\) be another open subscheme of \(X\) with \(\ms{F}_{|U'}\) locally free and such that both \(U\) and \(U'\) are dense in \(U\cup U'\). Put \(Z'=X\setminus U'\). Then \(\Bl_{Z'\hspace{-0.2em},\ms{F}}(X)\) equals \(\Bl_{Z,\ms{F}}(X)\). The simplified notation \(f\co\Bl_{\ms{F}}(X)\ra X\) is used if \(U\) is maximal with \(\ms{F}_{|U}\) locally free and \(f\) is called the blowing-up of \(X\) in \(\ms{F}\).  
Note that A.~Oneto and E.~Zatini \cite{one/zat:91} defined the blowing-up as the closure of the image of \(U\) with reduced structure. Many of their results extend to the non-reduced context.

As we shall consider base changes of blowing-ups, the following corollary will be useful.
\begin{cor}\label{cor.blowup}
Given a commutative diagram of scheme maps
\begin{equation*}
\xymatrix@C+6pt@R-8pt@H-6pt{
Y_{2}\ar[d]_(0.4){f_{2}}\ar[r]^{g} & Y_{1}\ar[d]^(0.4){f_{1}} \\
X_{2}\ar[r]^{p} & X_{1}
}
\end{equation*}
and an open subscheme \(U_{1}\sbeq X_{1}\)\textup{.} Put \(U_{2}=p^{-1}(U_{1})\) and \(Z_{i}=X_{i}\setminus U_{i}\)\textup{.} Assume that \(f_{i}\) is an isomorphism above \(U_{i}\) and that \(f_{i}^{-1}(U_{i})\) is dense in \(Y_{i}\) for \(i=1,2\)\textup{.}  
Suppose \(\ms{F}\) is a coherent \(\Q_{X_{1}}\)-module such that \(\ms{F}_{|U_{1}}\) and \((f_{1})_{Z_{1}}^{\tri}\ms{F}\) are locally free\textup{.}
\begin{enumerate}[leftmargin=2.4em, label=\textup{(\roman*)}]
\item The natural map \(g^{*}((f_{1})_{Z_{1}}^{\tri}\ms{F})\ra (f_{2})_{Z_{2}}^{\tri}(p^{*}\ms{F})\) is an isomorphism\textup{.}
\item If \(f_{1}\) equals \(\Bl_{Z_{1},\ms{F}}(X_{1})\ra X_{1}\) and \(Y_{2}=\Bl_{Z_{1},\ms{F}}(X_{1})\times X_{2}\)\textup{,} 
then \(Y_{2}\) is isomorphic to \(\Bl_{Z_{2},p^{*}\hspace{-0.12em}\ms{F}}(X_{2})\) over \(X_{2}\)\textup{.}
\end{enumerate}
\end{cor}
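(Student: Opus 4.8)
The plan is to reduce both parts to the universal property of the flattening blowing-up established in Proposition \ref{prop.blowup}, after first checking that strict transforms commute with the base change $p$ in the relevant sense. For part (i), I would work locally and use the description of the $Z_1$-strict transform as the image of the restriction map $f_1^*\ms{F}\ra (j_1)_*(f_1)_{U_1}^*(\ms{F}_{|U_1})$, where $j_1\co f_1^{-1}(U_1)\ra Y_1$ is the open inclusion. Pulling back along $g$ and using that $g^{-1}(f_1^{-1}(U_1))=f_2^{-1}(U_2)$ (which follows from the commutativity of the square and $U_2=p^{-1}(U_1)$), there is a natural comparison map $g^*((f_1)_{Z_1}^{\tri}\ms{F})\ra (f_2)_{Z_2}^{\tri}(p^*\ms{F})$. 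The point is that the source is \emph{locally free} on $Y_2$ by hypothesis (it is $g^*$ of a locally free sheaf), and it agrees with $(f_2)_{Z_2}^{\tri}(p^*\ms{F})$ on the dense open $f_2^{-1}(U_2)$, where both are canonically $(f_2)_{U_2}^*(p^*\ms{F})_{|U_2}$. A locally free sheaf mapping to a sheaf and agreeing with it on a dense open, where the target is the image of $f_2^*(p^*\ms{F})\ra (j_2)_*(\cdots)$ — hence has no sections supported on the closed complement — forces the map to be an isomorphism: surjectivity holds because $(f_2)_{Z_2}^{\tri}(p^*\ms{F})$ is a quotient of $f_2^*p^*\ms{F}=g^*f_1^*\ms{F}$ which factors through $g^*$ of the quotient $(f_1)_{Z_1}^{\tri}\ms{F}$, and injectivity because the kernel, being a subsheaf of a locally free sheaf on a scheme whose generic points lie in $f_2^{-1}(U_2)$, is torsion-free yet vanishes on that dense open.

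For part (ii), set $Y_2=\Bl_{Z_1,\ms{F}}(X_1)\times_{X_1}X_2$ with projections $g\co Y_2\ra Y_1=\Bl_{Z_1,\ms{F}}(X_1)$ and $f_2\co Y_2\ra X_2$. I first check that $f_2$ satisfies the two hypotheses needed to invoke the universal property: $f_2$ is an isomorphism above $U_2$ because $f_1$ is an isomorphism above $U_1$ and base change preserves isomorphisms, and $f_2^{-1}(U_2)$ is dense in $Y_2$ — this is the one point requiring a little care, and I address it below. Granting density, part (i) applies and shows $(f_2)_{Z_2}^{\tri}(p^*\ms{F})\cong g^*((f_1)_{Z_1}^{\tri}\ms{F})$ is locally free on $Y_2$, since $(f_1)_{Z_1}^{\tri}\ms{F}$ is locally free by the very definition of $\Bl_{Z_1,\ms{F}}(X_1)$. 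Thus $f_2\co Y_2\ra X_2$ has properties (i) and (ii) of Proposition \ref{prop.blowup} relative to $(X_2,U_2,p^*\ms{F})$, so by the universal property there is a unique $X_2$-map $Y_2\ra \Bl_{Z_2,p^*\ms{F}}(X_2)$. For the reverse map, one uses that $\Bl_{Z_2,p^*\ms{F}}(X_2)\ra X_2$ composed with $p$ receives, by the universal property of $\Bl_{Z_1,\ms{F}}(X_1)$ applied to the map $\Bl_{Z_2,p^*\ms{F}}(X_2)\ra X_1$, a canonical $X_1$-map to $Y_1$; together with the structure map to $X_2$ this yields an $X_2$-map $\Bl_{Z_2,p^*\ms{F}}(X_2)\ra Y_1\times_{X_1}X_2=Y_2$. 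The two composites are identities by uniqueness in the respective universal properties, so $Y_2\cong \Bl_{Z_2,p^*\ms{F}}(X_2)$ over $X_2$.

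The main obstacle is the density claim in part (ii): that $f_2^{-1}(U_2)$ is dense in $Y_2=Y_1\times_{X_1}X_2$. Base change does not in general preserve density of an open subscheme, so this uses something about the situation — either flatness of $p$, or that $Y_2$ is constructed as a scheme-theoretic closed image. Here the cleanest route is the description recalled after Proposition \ref{prop.blowup}: $Y_1$ is the scheme-theoretic closure of the image of $U_1\ra \Quot_{\ms{F}/X_1/X_1}$, and formation of $\Quot$ commutes with base change, so $\Bl_{Z_2,p^*\ms{F}}(X_2)$ is the scheme-theoretic closure of $U_2\ra (\Quot_{\ms{F}/X_1/X_1})\times_{X_1}X_2 = \Quot_{p^*\ms{F}/X_2/X_2}$; comparing this closure with the honest product $Y_1\times_{X_1}X_2$ and its open $f_2^{-1}(U_2)\cong U_2$ identifies $Y_2$ with that closure precisely when $f_2^{-1}(U_2)$ is schematically dense, which one verifies directly from $U_2=p^{-1}(U_1)$ and the schematic density of $f_1^{-1}(U_1)$ in $Y_1$. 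In practice, in the applications of this corollary $p$ will be flat (base changes of deformations), and then density of $f_2^{-1}(U_2)$ in $Y_2$ is automatic since $Y_2\ra Y_1$ is flat and $f_1^{-1}(U_1)$ is dense in $Y_1$; I would note that the flat case suffices for everything used later, and indicate the $\Quot$-closure argument for the general statement.
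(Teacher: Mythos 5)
Your proposal is correct and follows essentially the same route as the paper: in (i) the natural surjection \(g^{*}((f_{1})_{Z_{1}}^{\tri}\ms{F})\ra (f_{2})_{Z_{2}}^{\tri}(p^{*}\ms{F})\) is an isomorphism because the source is locally free and the map is an isomorphism over the dense open \(f_{2}^{-1}(U_{2})\), and in (ii) local freeness from (i) plus the universal property of Proposition \ref{prop.blowup} applied in both directions yields inverse \(X_{2}\)-maps. The only divergence is your closing paragraph on density of \(f_{2}^{-1}(U_{2})\) in \(Y_{2}\), which is not needed: that density is part of the standing hypotheses of the corollary (for \(i=1,2\)), and in the paper's applications it is verified separately (via Lemma \ref{lem.U}) rather than inside this proof.
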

\begin{proof}
(i) There is a natural map 
\begin{equation}
g^{*}\ms{H}^{0}_{f_{1}^{-1}Z_{1}}(f_{1}^{*}\ms{F})\lra \ms{H}^{0}_{f_{2}^{-1}Z_{2}}((pf_{2})^{*}\ms{F})
\end{equation}
inducing a surjection \(\phi\co g^{*}((f_{1})_{Z_{1}}^{\tri}\ms{F})\ra (f_{2})_{Z_{2}}^{\tri}(p^{*}\ms{F})\). Since \(\phi\) restricted to the dense \((f_{1}g)^{-1}(U_{1})\) is an isomorphism and \(g^{*}((f_{1})_{Z_{1}}^{\tri}\ms{F})\) is locally free, \(\phi\) is an isomorphism.

(ii) By (i), \((f_{2})_{Z_{2}}^{\tri}(p^{*}\ms{F})\) is locally free. By the universal property in Proposition \ref{prop.blowup} there is an \(X_{2}\)-map \(r\co Y_{2}\ra \Bl_{Z_{2},p^{*}\hspace{-0.12em}\ms{F}}(X_{2})\). Similarly, there is an \(X_{1}\)-map \(\Bl_{Z_{2},p^{*}\hspace{-0.12em}\ms{F}}(X_{2})\ra Y_{1}\), i.e.\ an \(X_{2}\)-map \(s\co\Bl_{Z_{2},p^{*}\hspace{-0.12em}\ms{F}}(X_{2})\ra Y_{2}\). By universality \(r\) and \(s\) are inverse isomorphisms.
\end{proof} 
Assume (for simplicity) that \(\ms{F}\) has a constant rank \(r\) and let \(K(X)\) denote the sheaf of meromorphic functions; cf. \cite{kle:79a}, \cite[\href{http://stacks.math.columbia.edu/tag/01X2}{Definition 01X2}]{SP} and \cite[\href{http://stacks.math.columbia.edu/tag/02OV}{Lemma 02OV}]{SP}. If \(r=1\) let \(\ms{F}^{n}\) be the image of the natural map \(\ms{F}^{\ot n}\ra i_{*}(\ms{F}^{\ot n}_{|U})\). Then 
\begin{equation}\label{eq.rk1}
\Bl_{\ms{F}}(X)\cong\Proj\bigl(\bigoplus_{n\geq 0} \ms{F}^{n}\bigr)
\end{equation}
is the scheme-theoretic closed image of \(U\) in \(\BB{P}(\ms{F})\). Oneto and Zatini observed that the Pl{\"u}cker embedding of the Grassmann gives the fractional ideal sheaf
\begin{equation}\label{eq.OZ}
\llb \ms{F}\rrb=\im\bigl\{\textstyle\bigwedge^{r}\hspace{-0.2em}\ms{F}\ra \textstyle\bigwedge^{r}\hspace{-0.3em}\ms{F}\hspace{0.15em}\ot_{\Q_{X}}K(X)\cong K(X)\bigr\}
\end{equation}
for the blowing-up \(f\co\Bl_{\ms{F}}(X)\ra X\); cf. \cite[1.4, 3.1]{one/zat:91}, \cite[3.3]{vil:06}. 
Villamayor has given an explicit description of an equivalent ideal.
Suppose \(X=\Spec A\) for a ring \(A\) and \(\ms{F}\) is given by an \(A\)-module \(M\).
Choose \(n\) generators for \(M\) and let \(\Syz(M)\) denote the kernel of the resulting map \(A^{\oplus n}\ra M\). Then \(\rk\Syz(M)=n-r\) and any choice of \(n-r\) elements in \(\Syz(M)\) which induces generators for \(K(A)\ot\Syz(M)\cong K(A)^{\oplus n-r}\) defines a linear map \(\psi\co A^{\oplus n-r}\ra A^{\oplus n}\) such that the ideal of maximal minors of \(\psi\) is isomorphic to \(\llb M\rrb\). 
See \cite[3.3]{vil:06}.
 
Curto and Morrison defines a `Grassmann blowup' as the closure in \(\BB{C}^{N}\times \Grass(n-r,n)\) of a set defined in terms of the smooth locus and the presentation matrix \(\phi\). 
In the case of a matrix factorisation of a hypersurface they state in \cite[2.1]{cur/mor:13} a universal property for the normalization of the Grassmann blowup for `birational' maps \(h\co Y\ra X\) such that \(h^{\tri}M\) is locally free. 
By our discussion and Proposition \ref{prop.modific} it follows that their normalized Grassmann blowup equals \(\Bl_{M}(X)\) for RDPs once we know that \(\Bl_{M}(X)\) is normal. Normality is not obvious and will be proved for a reflexive module on a rational surface singularity in Proposition \ref{prop.normal}.
\subsection{Strict transforms and Chern classes}
The strict transform of a reflexive sheaf along a resolution of a rational surface singularity is locally free; see \cite[2.10]{gon-spr/ver:83} for quotient singularties,
the general case is cited in \cite[1.1]{art/ver:85}. Esnault proves a characterisation of sheaves on the resolution which are strict transforms of reflexive modules in \cite[2.2]{esn:85}. We give the following natural generalisation of Esnault's result which needs a slightly different proof.
\begin{prop}\label{prop.lCM}
Let \(f\co Y\ra X\) be a partial resolution of a rational surface singularity\textup{.}
\begin{enumerate}[leftmargin=2.4em, label=\textup{(\roman*)}]
\item Suppose \(M\) is a reflexive \(\Q_{X}\)-module\textup{.} Then the strict transform \(f^{\tri}M\) is a reflexive \(\Q_{Y}\)-module generated by global sections, the natural map \(M\ra f_{*}f^{\tri}M\) is an isomorphism\textup{,} and \(\mr{R}^{1}\hspace{-0.2em}f_{*}\shm{}{Y}{f^{\tri}M}{\omega_{Y}}=0\)\textup{.} In particular\textup{,} \(f^{\tri}M\) is locally free if \(Y\) is regular\textup{.}
\item If \(\ms{F}\) is a reflexive \(\Q_{Y}\)-module with \(\mr{R}^{1}\hspace{-0.2em}f_{*}\shm{}{Y}{\ms{F}}{\omega_{Y}}=0\) then \(f_{*}\ms{F}\) is a reflexive \(\Q_{X}\)-module\textup{.} Moreover\textup{,} if \(\ms{F}\) is generated by global sections then the natural map \(f^{\tri}f_{*}\ms{F}\ra \ms{F}\) is an isomorphism\textup{.} 
\end{enumerate}
\end{prop}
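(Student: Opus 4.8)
The plan is to establish the two parts by opposite routes: part (ii) (first assertion) will fall straight out of relative Grothendieck duality for \(f\), part (i) will be reduced to the case already known when \(Y\) is regular (\cite{gon-spr/ver:83,art/ver:85,esn:85}), and the second assertion of (ii) will then be a formal consequence of (i). Throughout I would use the following standing facts: by Proposition \ref{prop.modific} \(Y\) is again a rational surface singularity, hence Cohen--Macaulay with a canonical module \(\omega_{Y}\), and over such rings reflexive \(=\) maximal Cohen--Macaulay; \(\mr{R}^{1}f_{*}\Q_{Y}=0\) for any partial resolution \(f\) (compose with the minimal resolution of \(Y\) and use that \(Y\) is rational), so \(\mr{R}^{1}f_{*}\ms{G}=0\) for any quotient \(\ms{G}\) of a free \(\Q_{Y}\)-module because the fibres of \(f\) are at most curves and \(\mr{R}^{2}f_{*}=0\); and \(X,Y\) are Cohen--Macaulay of the same dimension, so \(f^{!}\omega_{X}\cong\omega_{Y}\).

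For part (ii), first assertion: since \(\ms{F}\) is reflexive it is maximal Cohen--Macaulay, so \(\mr{R}\shm{}{Y}{\ms{F}}{\omega_{Y}}=\shm{}{Y}{\ms{F}}{\omega_{Y}}\) (no higher \(\ShExt\) sheaves), and relative duality gives \(\mr{R}f_{*}\shm{}{Y}{\ms{F}}{\omega_{Y}}\cong\mr{R}\sHom_{X}(\mr{R}f_{*}\ms{F},\omega_{X})\). The left-hand side is concentrated in degree \(0\): its \(\mr{R}^{2}f_{*}\) vanishes by fibre dimension and its \(\mr{R}^{1}f_{*}\) by hypothesis. Applying \(\mr{R}\sHom_{X}(-,\omega_{X})\) and biduality on the right then identifies \(\mr{R}f_{*}\ms{F}\) with \(\mr{R}\sHom_{X}(f_{*}\shm{}{Y}{\ms{F}}{\omega_{Y}},\omega_{X})\); taking \(\mc{H}^{0}\) exhibits \(f_{*}\ms{F}\) as a double \(\omega_{X}\)-dual, hence reflexive. (The same display incidentally yields \(\mr{R}^{1}f_{*}\ms{F}\cong\sxt{1}{X}{f_{*}\shm{}{Y}{\ms{F}}{\omega_{Y}}}{\omega_{X}}\).)

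For part (i): first the cheap observations. Generators of \(M\) give a surjection \(\Q_{Y}^{\op n}\thr f^{*}M\thr f^{\tri}M\), so \(f^{\tri}M\) is globally generated and \(\mr{R}^{1}f_{*}(f^{\tri}M)=0\). Let \(\rho\co Y'\ra Y\) be the minimal resolution of \(Y\); then \(f\rho\) resolves \(X\), and by the resolution case (\cite{gon-spr/ver:83,art/ver:85,esn:85}) \(\ms{N}:=(f\rho)^{\tri}M\) is locally free with \(M\xra{\sim}(f\rho)_{*}\ms{N}\) and \(\mr{R}^{1}(f\rho)_{*}\shm{}{Y'}{\ms{N}}{\omega_{Y'}}=0\). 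Put \(\ms{M}':=\rho_{*}\ms{N}\). The Leray spectral sequence for \(f\rho=f\circ\rho\), together with \(\mr{R}^{2}f_{*}=0\) and the fact that \(\mr{R}^{1}\rho_{*}\shm{}{Y'}{\ms{N}}{\omega_{Y'}}\) is supported at the finitely many (rational) singular points of \(Y\), forces \(\mr{R}^{1}\rho_{*}\shm{}{Y'}{\ms{N}}{\omega_{Y'}}=0\); relative duality for \(\rho\) then gives \(\sxt{1}{Y}{\ms{M}'}{\omega_{Y}}=0\), and since \(\ms{M}'\) is torsion-free, local duality makes \(\ms{M}'\) maximal Cohen--Macaulay, i.e.\ reflexive. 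It remains to identify \(\ms{M}'\) with \(f^{\tri}M\): the composite \(f^{*}M\ra\rho_{*}\rho^{*}f^{*}M=\rho_{*}\bigl((f\rho)^{*}M\bigr)\ra\rho_{*}\ms{N}=\ms{M}'\) factors through \(f^{\tri}M\) (the target is torsion-free) and is an isomorphism over the dense open where \(f\) is an isomorphism; pushing it down and comparing with the natural map \(M\ra f_{*}f^{\tri}M\) shows that the latter is an isomorphism, hence \(f_{*}(f^{\tri}M\ra\ms{M}')\) is an isomorphism, and then \(\mr{R}^{1}f_{*}(f^{\tri}M)=0\) forces \(f^{\tri}M\ra\ms{M}'\) itself to be an isomorphism. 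Thus \(f^{\tri}M\) is reflexive and globally generated, \(M\cong f_{*}f^{\tri}M\), and it is locally free if \(Y\) is regular. Finally, \(f^{\tri}M\) being reflexive, relative duality for \(f\) gives \(\mr{R}f_{*}\shm{}{Y}{f^{\tri}M}{\omega_{Y}}\cong\mr{R}\sHom_{X}(M,\omega_{X})=\sHom_{X}(M,\omega_{X})\), concentrated in degree \(0\), so \(\mr{R}^{1}f_{*}\shm{}{Y}{f^{\tri}M}{\omega_{Y}}=0\).

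For part (ii), second assertion: with \(M:=f_{*}\ms{F}\) reflexive, the counit \(f^{*}M\ra\ms{F}\) kills torsion and so factors through a map \(f^{\tri}M\ra\ms{F}\) that is injective (\(f^{\tri}M\) torsion-free, isomorphism on a dense open); if \(\ms{F}\) is globally generated, then \(H^{0}(Y,\ms{F})=H^{0}(X,M)\) shows this map is also surjective, hence an isomorphism. The step I expect to be the main obstacle is the reflexivity of \(f^{\tri}M\) in part (i): over a singular \(Y\) one can no longer argue ``reflexive \(=\) locally free'' as Esnault does on the resolution, so the substance is in the identification \(f^{\tri}M\cong\rho_{*}\bigl((f\rho)^{\tri}M\bigr)\) and the \(S_{2}\)-property of the latter, powered by the vanishing \(\mr{R}^{1}\rho_{*}\shm{}{Y'}{(f\rho)^{\tri}M}{\omega_{Y'}}=0\) (obtained by Leray from the resolution case) and relative duality for \(\rho\). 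A secondary point that needs care is verifying that the several comparison maps used in (i) are indeed the natural ones, so that the diagram chase closes.
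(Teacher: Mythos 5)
Your treatment of (ii) and of the easy parts of (i) is correct and close in substance to the paper's: for the first claim of (ii) the paper runs the spectral sequence of the duality isomorphism and kills \(\sxt{q}{X}{f_{*}\ms{F}}{\omega_{X}}\) for \(q>0\) using that \(\mr{R}^{1}\hspace{-0.15em}f_{*}\ms{F}\) has finite length, while you dualize back by biduality and read off \(f_{*}\ms{F}\) as an \(\omega_{X}\)-dual; both versions rest on the same duality input plus the standard fact that an \(\omega_{X}\)-dual on a two-dimensional normal Cohen--Macaulay singularity has depth two, hence is reflexive. For (i), however, you take a genuinely different route: the paper applies duality directly to \(f\), uses \(\BB{R}f_{*}\ms{M}\simeq M\) and the observation that \(\sxt{p}{Y}{\ms{M}}{\omega_{Y}}\) has zero-dimensional support for \(p>0\) to force all these sheaves to vanish at once, whereas you pass to the minimal resolution \(\rho\co Y'\ra Y\), quote the resolution case, and prove \(\rho_{*}\ms{N}\) is maximal Cohen--Macaulay via Leray plus duality for \(\rho\). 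That reduction is workable, but be aware that the references you lean on state the full resolution-case package (local freeness, \(M\cong\pi_{*}\ms{N}\), and above all \(\mr{R}^{1}\pi_{*}(\ms{N}^{\vee}\hspace{-0.1em}\ot\omega)=0\)) for quotient singularities and rational double points; for an arbitrary rational surface singularity you would either have to invoke Riemenschneider--Wunram or prove these by essentially the same duality computation the paper performs directly for \(f\), which erodes the advantage of the detour through \(\rho\).

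The one step that does not hold as written is the last one in (i). From \(f_{*}\psi\) being an isomorphism and \(\mr{R}^{1}\hspace{-0.15em}f_{*}(f^{\tri}M)=0\) you conclude that \(\psi\co f^{\tri}M\ra\rho_{*}\ms{N}\) is an isomorphism; but what these hypotheses give is only \(f_{*}(\coker\psi)=0\), and a nonzero sheaf supported on the exceptional curves can have vanishing \(f_{*}\) (and even vanishing \(\mr{R}^{1}\hspace{-0.15em}f_{*}\)), e.g.\ \(\Q_{\BB{P}^{1}}(-1)\) on a component of \(E(f)\). Since all you recorded is that \(\psi\) is an isomorphism over the locus where \(f\) is an isomorphism, a cokernel with one-dimensional support is not excluded and the inference fails as stated. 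The repair is immediate from your own construction: \(\rho\) is an isomorphism away from the finitely many singular points of \(Y\), and over that open set your comparison map is the canonical identification of \(f^{\tri}M\) with the strict transform there, so \(\coker\psi\) has zero-dimensional support; for such a sheaf \(f_{*}(\coker\psi)=0\) does force \(\coker\psi=0\). With that observation inserted (together with your deferred but unproblematic check that the comparison maps are the natural adjunction maps), your argument closes and proves the proposition.
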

\begin{proof}
(i) Put \(\ms{M}=f^{\tri}M\). As a quotient of \(f^{*}M\), \(\ms{M}\) is generated by global sections. Let \(U\) denote the non-singular locus in $X$. 
Since $f$ is an isomorphism above $U$ and $f_*\ms{M}$ is torsion free, the natural map $\alpha\co M\ra f_*\ms{M}$ is an isomorphism by \cite[\href{https://stacks.math.columbia.edu/tag/0AVS}{Lemma 0AVS}]{SP}.  
Also note that \(\ms{M}\) is locally free on the complement of a \(0\)-dimensional locus since \(\ms{M}\) is torsion free and \(Y\) is normal; cf. \cite[Chap. VII, \S 4.9, Thm. 6]{bou:98}.

The duality theorem \cite[VII 3.4]{har:66} (cf. \cite[3.4.4]{con:00}) gives an isomorphism:
\begin{equation}\label{eq.dual}
\BB{R}f_{*}\BB{R}\shm{}{Y}{\ms{M}}{\omega_{Y}}\xra{\hspace{1.5em}\sim\hspace{1.5em}}\BB{R}\shm{}{X}{\BB{R}f_{*}\ms{M}}{\omega_{X}}
\end{equation}
Rationality gives \(\BB{R}f_{*}\ms{M}\simeq f_{*}\ms{M}\) and the resulting spectral sequence gives short exact sequences:
\begin{equation}\label{eq.Mdual}
0\ra \mr{R}^{1}\hspace{-0.2em}f_{*}\sxt{p-1}{Y}{\ms{M}}{\omega_{Y}}\lra \sxt{p}{X}{M}{\omega_{X}}\lra f_{*}\sxt{p}{Y}{\ms{M}}{\omega_{Y}}\ra 0
\end{equation}
Since \(M\) is maximal Cohen-Macaulay, \(\sxt{p}{X}{M}{\omega_{X}}=0\) for all \(p>0\) which implies \(\sxt{p}{Y}{\ms{M}}{\omega_{Y}}=0\) because  
\(\sxt{p}{Y}{\ms{M}}{\omega_{Y}}\) has zero dimensional support for \(p>0\). It follows that \(\ms{M}\) is maximal Cohen-Macaulay, i.e.\ reflexive since \(Y\) is normal. Moreover, \(\mr{R}^{1}\hspace{-0.2em}f_{*}\shm{}{Y}{\ms{M}}{\omega_{Y}}=0\) by \eqref{eq.Mdual}. For local cohomology; cf. \cite[Chap. 3]{bru/her:98}.

(ii) Since \(Y\) is normal, \(\ms{F}\) is maximal 
Cohen-Macaulay, so \eqref{eq.dual} gives (with \(\ms{F}\) replacing \(\ms{M}\)) an isomorphism \(\BB{R}f_{*}\shm{}{Y}{\ms{F}}{\omega_{Y}}\simeq\BB{R}\shm{}{X}{\BB{R}f_{*}\ms{F}}{\omega_{X}}\).
The associated second quadrant cohomological spectral sequence gives an exact sequence:
\begin{equation}\label{eq.Fdual}
\begin{aligned}
0\ra{} &\sxt{1}{X}{\mr{R}^{1}\hspace{-0.2em}f_{*}\ms{F}}{\omega_{X}}\ra f_{*}\shm{}{Y}{\ms{F}}{\omega_{Y}}\ra\shm{}{X}{f_{*}\ms{F}}{\omega_{X}}\\
\ra{} & \sxt{2}{X}{\mr{R}^{1}\hspace{-0.2em}f_{*}\ms{F}}{\omega_{X}}\ra \mr{R}^{1}\hspace{-0.2em}f_{*}\shm{}{Y}{\ms{F}}{\omega_{Y}}\ra\sxt{1}{X}{f_{*}\ms{F}}{\omega_{X}}\ra \dots
\end{aligned}
\end{equation}
Since \(\mr{R}^{q}\hspace{-0.2em}f_{*}\shm{}{Y}{\ms{F}}{\omega_{Y}}=0\) for \(q>0\), \eqref{eq.Fdual} gives 
\begin{equation}
\sxt{q}{X}{f_{*}\ms{F}}{\omega_{X}}\cong \sxt{q+2}{X}{\mr{R}^{1}\hspace{-0.2em}f_{*}\ms{F}}{\omega_{X}}\qquad (q>0) 
\end{equation}
and the latter is zero by \cite[3.5.11]{bru/her:98}, 
i.e.\ \(f_{*}\ms{F}\) is maximal Cohen-Macaulay. Any map \(\Q_{Y}^{\oplus n}\ra \ms{F}\) factors as 
\begin{equation}\label{eq.glob}
\Q_{Y}^{\oplus n}\cong f^{\tri}f_{*}\Q_{Y}^{\oplus n}\lra f^{\tri}f_{*}\ms{F}\xra{\hspace{0.3em}\rho\hspace{0.3em}} \ms{F}
\end{equation}
hence if the former is surjective so is \(\rho\). But since \(f^{\tri}f_{*}\ms{F}\) is torsion free, \(\rho\) is an isomorphism.
\end{proof}
\begin{rem}
The argument in (ii) works for any normal surface singularity. See also \cite[2.74]{kol:13}.
\end{rem}
\begin{lem}\label{lem.chern}
Suppose \(f\co Y\ra X\) is a partial resolution of a rational surface singularity and \(\ms{F}\) is a locally free \(\Q_{Y}\)-module of rank \(r\) generated by global sections\textup{.} 
A generic choice of \(r\) global sections gives a short exact sequence of coherent \(\Q_{Y}\)-modules
\begin{equation*}
\alpha\co\hspace{0.6em}0\ra \Q_{Y}^{\oplus r}\xra{(s_{1},\dots,s_{r})} \ms{F}\lra\Q_{D}\ra 0
\end{equation*}
where \(D\) is an effective\textup{,} affine\textup{,} smooth divisor intersecting \(E(f)_{\tn{red}}\) transversally\textup{.} 

Moreover\textup{,} the \(r-1\) sections \(s_{2},\dots,s_{r}\) give a short exact sequence
\begin{equation*} 
\beta\co\hspace{0.6em}0\ra \Q_{Y}^{\oplus r-1}\xra{(s_{2},\dots,s_{r})} \ms{F}\xra{\,\,w\,\,}\textstyle\bigwedge^{\hspace{-0.12em}r}\hspace{-0.2em}\ms{F}\ra 0
\end{equation*}
where \(w(m)=m\wedge s_{2}\wedge\dots\wedge  s_{r}\) and
\(\bigwedge^{\hspace{-0.12em}r}\hspace{-0.2em}\ms{F}\cong \Q_{Y}(D)\)\textup{.}
\end{lem}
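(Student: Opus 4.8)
The plan is to obtain both sequences from a single flag of general global sections of \(\ms{F}\), chosen one at a time. The key preliminary observation is that, since \(X=\Spec A\) with \(A\) local, the image in \(\Spec A\) of the zero scheme of any global section of a coherent \(\Q_{Y}\)-module is a set of closed points, hence is contained in \(\{\fr{m}\}\); so any such zero scheme that has dimension \(0\) already lies on the exceptional curve \(E:=E(f)_{\tn{red}}\). On the one-dimensional \(E\) (whose components are \(\cong\BB{P}^{1}\) by Proposition \ref{prop.pic}(i)) a base-point-free subsystem of sections of a locally free sheaf of rank \(m\geq 2\) has a nowhere-vanishing general member, because the universal zero locus has dimension strictly less than that of the parameter space; the same count shows a general \((m-1)\)-tuple of such sections is everywhere linearly independent. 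Since \(\ms{F}\) is globally generated, the sections of any quotient of \(\ms{F}\) used below are images of \(\cH^{0}(Y,\ms{F})\), so these genericity statements can be realised by honest global sections of \(\ms{F}\), whose zero loci are thereby pushed off \(E\) and are therefore empty.

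Using this inductively I would construct, for \(k=1,\dots,r-1\), sections \(s_{2},\dots,s_{k+1}\in\cH^{0}(Y,\ms{F})\) such that the induced \(\Q_{Y}^{\oplus k}\xra{(s_{2},\dots,s_{k+1})}\ms{F}\) is a subbundle with locally free cokernel \(\ms{F}_{k+1}:=\ms{F}/\langle s_{2},\dots,s_{k+1}\rangle\) of rank \(r-k\): given \(\ms{F}_{k}\) of rank \(r-k+1\geq 2\), a general member of the base-point-free subsystem \(\im\{\cH^{0}(Y,\ms{F})\ra\cH^{0}(Y,\ms{F}_{k})\}\) is a nowhere-vanishing section \(\bar s_{k+1}\) of \(\ms{F}_{k}\) (its zero scheme being supported on \(E\), where the count gives negative expected dimension), and any \(s_{k+1}\) lifting it works. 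After \(r-1\) steps \(\mc{L}:=\ms{F}/\langle s_{2},\dots,s_{r}\rangle\) is invertible; taking \(\textstyle\bigwedge^{r}\) of \(0\ra\Q_{Y}^{\oplus r-1}\ra\ms{F}\ra\mc{L}\ra 0\) identifies \(\mc{L}\cong\textstyle\bigwedge^{r}\ms{F}\). Completing \(s_{2},\dots,s_{r}\) to a local basis of \(\ms{F}\) shows that \(w(m)=m\wedge s_{2}\wedge\dots\wedge s_{r}\) is, under this identification, the quotient map \(\ms{F}\thr\mc{L}\), so it is surjective with kernel \(\langle s_{2},\dots,s_{r}\rangle\); this is the sequence \(\beta\).

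For \(\alpha\) I would take one further general section \(s_{1}\). Its image \(\bar s_{1}\in\cH^{0}(Y,\mc{L})\) is a general section of the globally generated \(\mc{L}\), so by a Bertini argument (performed on the regular locus of \(Y\), which omits only finitely many points, and relative to the curve \(E\)) its zero scheme \(D\) is a reduced smooth divisor, disjoint from \(\Sing Y\), meeting \(E\) transversally, with \(\mc{L}\cong\Q_{Y}(D)\). Composing \(\Q_{Y}^{\oplus r}=\Q_{Y}\op\Q_{Y}^{\oplus r-1}\xra{(s_{1},\dots,s_{r})}\ms{F}\) with \(\ms{F}\thr\mc{L}\) kills the second summand and sends the generator of the first to \(\bar s_{1}\), so \(\coker(s_{1},\dots,s_{r})\cong\coker(\Q_{Y}\xra{\bar s_{1}}\mc{L})\cong\mc{L}_{|D}\); the generic rank of \((s_{1},\dots,s_{r})\) being \(r\), the map is injective. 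Finally \(\mc{L}_{|D}\cong\Q_{D}\): every connected component \(D_{j}\) of the smooth curve \(D\) is affine and semilocal, since \(f(D_{j})\) is a closed subscheme \(\Spec(A/\fr{p})\) of \(\Spec A\) --- hence local --- and \(D_{j}\), being \(\not\sbeq E\supseteq\varSigma(f)\), is finite and birational over it, so equals its normalisation; a semilocal Dedekind domain is a principal ideal domain, whence \(\Pic(D)=0\). Since the conditions imposed on \((s_{1},\dots,s_{r})\) are open, a generic \(r\)-tuple works, which gives the first assertion.

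I expect the crux to be this last identification: that the cokernel in \(\alpha\) is \emph{literally} \(\Q_{D}\) rather than merely some invertible sheaf on \(D\). It is not formal --- it uses that \(D\) is affine, indeed semilocal, a property inherited from \(X\) being a local singularity, which is exactly what forces \(\Pic(D)=0\). A secondary technical point is the Bertini step: arranging \(D\) to be simultaneously smooth, disjoint from \(\Sing Y\), and transversal to the exceptional curve over an arbitrary algebraically closed field.
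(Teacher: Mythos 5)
Your proof is correct in outline but takes a genuinely different route from the paper's. The paper obtains \(\alpha\) wholesale from Artin--Verdier \cite[1.2]{art/ver:85} and then deduces the ``moreover'' sequence \(\beta\) from \(\alpha\) by an extension-theoretic argument: since \(D\) is affine, \(\xt{1}{Y}{\Q_{D}}{\Q_{Y}^{\oplus r}}\cong\Q_{D}^{\oplus r}\), the class of \(\alpha\) must generate, so some component \(\alpha_{i}\) does; the pushout of \(\alpha\) along the \(i\)-th projection is a rank-one locally free extension of \(\Q_{D}\) by \(\Q_{Y}\), which is compared with \(0\ra\Q_{Y}\ra\Q_{Y}(D)\ra\Q_{D}\ra 0\) via \(\Aut(\Q_{D})\cong\Q_{D}^{\times}\), yielding \(0\ra\Q_{Y}^{\oplus r-1}\ra\ms{F}\ra\Q_{Y}(D)\ra 0\). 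You go the other way: you construct \(\beta\) directly from a flag of sections that are nowhere vanishing on the successive locally free quotients, and then get \(\alpha\) by adjoining one further section, identifying the cokernel with \(\ms{L}\ot\Q_{D}\) and trivialising it because \(\Gamma(\Q_{D})\) is semilocal. That last point is a genuine simplification: finiteness of each component of \(D\) over its local image in \(\Spec A\) already gives \(\Pic(D)=0\) (invertible modules over semilocal rings are free), so the identification of the cokernel with \(\Q_{D}\) does not need smoothness of \(D\) at all; smoothness and transversality are separate assertions of the lemma. What the paper's route buys is that all generic-section geometry is concentrated in the single citation to \cite[1.2]{art/ver:85}; what yours buys is a self-contained derivation of \(\beta\) that bypasses the \(\Ext\)-pushout comparison.

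Two caveats. First, your ``preliminary observation'' is false as stated: the image in \(\Spec A\) of the zero scheme of a section is not in general a set of closed points. What you actually need is simpler and true: any nonempty closed subset of \(Y\) has closed image in the local scheme \(\Spec A\) (as \(f\) is proper), hence its image contains the closed point and it meets \(E(f)\). So once your incidence count on the proper curve \(E(f)_{\tn{red}}\) (which is valid over any algebraically closed field) shows that a general section is nowhere zero on \(E(f)_{\tn{red}}\), its zero scheme is empty, with no case distinction on dimension; please replace the observation by this. Second, the Bertini step -- \(D\) smooth, avoiding \(\Sing Y\), meeting \(E(f)_{\tn{red}}\) transversally -- is exactly the content the paper outsources to \cite[1.2]{art/ver:85}. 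In characteristic zero your argument goes through (after descending to a finite type model, since \(Y\) is only algebraic), but base-point-freeness alone does not give Bertini smoothness in positive characteristic, and the lemma is stated for an arbitrary algebraically closed field; at this point you either need to restrict the characteristic or to supply the finer generic-section analysis of \cite{art/ver:85,wun:88} rather than a bare appeal to Bertini. With the first point repaired and the second supplied, your proof is complete.
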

\begin{proof} 
By Proposition \ref{prop.pic} the prime components of \(E(f)_{\tn{red}}\) are smooth. Then \(\alpha\) follows as in \cite[1.2]{art/ver:85}.
Pushout of $\Q_{Y}^{\oplus r}\ra \ms{F}$ along the first  projection $\Q_{Y}^{\oplus r}\ra \Q_Y$ gives a s.e.s. $0\ra \Q_{Y}^{\oplus r-1}\hspace{-0.2em}\ra \ms{F}\xra{\,p\,}\ms{E}\ra 0$ where $\ms{E}$ is an invertible sheaf by \cite[3.5.1]{vdber:04}. Since $\im(s_{2},\dots,s_{r})\sbeq\ker w$ there is an induced map $i\co \ms{E}\ra \bigwedge^{\hspace{-0.12em}r}\hspace{-0.2em}\ms{F}$ with $w=ip$. 
The map \(w\) is surjective since \(p\) splits locally. Then $i$ is an isomorphism. Applying $\shm{}{Y}{-}{\Q_Y}$ to the induced s.e.s. $0\ra \Q_{Y}\ra \ms{E}\ra\Q_D\ra 0$ gives the s.e.s. $0\ra \ms{E}^{\vee}\ra\Q_Y\ra\Q_D\ra 0$ which implies that $\ms{E}\cong \Q_Y(D)$.
\end{proof}
For a locally free sheaf \(\ms{F}\) of rank \(r\) we use the notation \(\ch_{1}(\ms{F})=\bigwedge^{\hspace{-0.12em}r}\hspace{-0.2em}\ms{F}\). Note that Wunram in \cite[A2]{wun:88} gave two non-isomorphic indecomposable reflexive modules of rank $3$ on $\BB{I}_7$ with equal Chern classes.
\subsection{Base change and cohomology}
We will need a base change result for $\Ext$ which is not covered by \cite[7.7.5]{EGAIII2}.
Let \(f\co Y\ra X=\Spec R\) and \(g\co X\ra S=\Spec A\) be maps of schemes and \(\ms{E}\) and \(\ms{F}\) coherent \(\Q_{Y}\)-modules such that \(Y\), \(\ms{E}\) and \(\ms{F}\) are \(S\)-flat. Assume that \(g\) is local (i.e.\ given by a local map of local \(k\)-algebras \(A\ra R\)) and $f$ is proper. Put \(\pi=gf\).  
For any quasi-coherent $\Q_Y$-module $\ms{G}$ and any $n$, $\sxt{n}{Y}{\ms{E}}{\ms{G}}$ is a quasi-coherent $\Q_Y$-module, moreover, $\pi_*\sxt{n}{Y}{\ms{E}}{\ms{G}}$ is quasi-coherent since $\pi$ is proper; \cite[\href{https://stacks.math.columbia.edu/tag/01XJ}{Lemma 01XJ}]{SP}. Also note that $\xt{n}{Y}{\ms{E}}{\ms{G}}$ is naturally an $R$-module which is finitely generated if $\ms{G}$ is coherent by the local-to-global spectral sequence $\cE^{p,q}_2=\cH^q(\sxt{p}{Y}{\ms{E}}{\ms{G}})\Ra \xt{n}{Y}{\ms{E}}{\ms{G}}$ and properness (\cite[3.2.1]{EGAIII1}). The natural isomorphism of functors $f_{\hspace{-0.08em}*}\shm{}{Y}{\ms{E}}{-}\cong\hm{}{Y}{\ms{E}}{-}^{\tilde{}}$ extends to an isomorphism of the right derived universal $\delta$-functors: 
\begin{equation}
\left\{\sxt{n}{f}{\ms{E}}{-}\cong\xt{n}{Y}{\ms{E}}{-}^{\tilde{}}\,\right\}_{n\in \BB{Z}}\co \cat{QCoh}(Y)\lra \cat{QCoh}(X)
\end{equation}
which restricts to functors of coherent sheaves $\cat{Coh}(Y)\ra \cat{Coh}(X)$.

For every integer $n$ we define a functor of quasi-coherent sheaves 
\begin{equation}
F^n\co \cat{QCoh}(S)\lra\cat{QCoh}(X)\quad \tn{by}\quad F^{n}(I)=\xt{n}{Y}{\ms{E}}{{\ms{F}}\ot \pi^{*}\hspace{-0.1em}I}^{\tilde{}}\,.
\end{equation}
The functor given by $I\mapsto \ms{F}\ot \pi^{*}I$ is exact since $\ms{F}$ is $S$-flat and $\{F^n\}_{n\in \BB{Z}}$ is a cohomological $\delta$-functor. Moreover, $F^n(I)$ is a coherent $\Q_X$-module if $I$ is a coherent $\Q_S$-module, and $F^n$ commutes with filtered direct limits.  
Hence the conditions in \cite[5.1-2]{ogu/ber:72} are satisfied and the conclusions apply to the exchange maps 
\begin{equation}
e^n_I\co F^n(\Q_S)\ot_{\Q_{\hspace{-0.08em}X}} g^{*}\hspace{-0.1em}I\lra F^n(I)
\end{equation}
which are defined essentially by applying $F^n$ to the multiplication maps $\cdot u\co \Q_S\ra I$ for $u\in I$, see the beginning of Section 4 in \cite{ogu/ber:72} or \cite[7.2.2]{EGAIII2}. 

We first extend the exchange map to ordinary fibre products by a local scheme map \(p\co T=\Spec B\ra S\). Put $X':=X{\times}_ST$ and $Y':=Y{\times}_ST$. Let $\mr{pr}_{\hspace{-0.12em}X}\co X'\ra X$, $q\co Y'\ra Y$, $g'\co X'\ra T$, $f'\co Y'\ra X'$ and $\pi'=g'{\ccirc} f'$ denote the projections. Suppose $\ms{G}$ is a quasi-coherent $\Q_{Y'}$-module. Applying $\BB{R}f_*$ to the natural, functorial isomorphism in \cite[II 5.10]{har:66} gives
\begin{equation}\label{eq.dad}
\BB{R}(\mr{pr}_X)_*\BB{R}\shm{}{f'}{\BB{L}q^{*}\ms{E}}{\ms{G}}
\simeq
\BB{R}\shm{}{f}{\ms{E}}{\BB{R}q_{*}\ms{G}}\,.
\end{equation}
Note that $\BB{L}q^{*}\ms{E}\simeq q^{*}\ms{E}$ since $\ms{E}$ is $S$-flat. Moreover, $q$ and $\mr{pr}_X$ are affine, so \eqref{eq.dad} gives isomorphisms 
\begin{equation}
\eta\co \xt{n}{Y}{\ms{E}}{q_{*}\ms{G}}^{\tilde{}}\cong (\mr{pr}_{\hspace{-0.12em}X}\hspace{-0.18em})_{*}\xt{n}{Y'}{q^{*}\ms{E}}{\ms{G}}^{\tilde{}}\,.
\end{equation}
Suppose now that \(I\) is a quasi-coherent \(\Q_{T}\)-module and let $e_I^n$ denote the exchange map $e_I^n\co \xt{n}{Y'}{q^*\ms{E}}{q^*\ms{F}}^{\tilde{}}\,\ot_{\Q_{X'}}(g')^*I\ra \xt{n}{Y'}{q^*\ms{E}}{q^*\ms{F}\ot_{\Q_{Y'}}(\pi')^*\hspace{-0.12em}I}^{\tilde{}}$. 
We define the (ordinary) base change map $b^n_I$ by the following commutative diagram
\begin{equation}
\xymatrix@C+15pt@R-8pt@H-6pt{
\mr{pr}_{\hspace{-0.12em}X}^*\xt{n}{Y}{\ms{E}}{\ms{F}}^{\tilde{}}\,\ot_{\Q_{X'}}(g')^*\hspace{-0.12em}I\ar[r]^(0.48){b^n_I} \ar[d]_(0.45){a}
& \xt{n}{Y'}{q^*\ms{E}}{q^*\ms{F}\ot_{\Q_{Y'}}(\pi')^*\hspace{-0.12em}I}^{\tilde{}}
\\
\mr{pr}_{\hspace{-0.12em}X}^*\xt{n}{Y}{\ms{E}}{q_*q^*\ms{F}}^{\tilde{}}\,\ot_{\Q_{X'}}(g')^*\hspace{-0.12em}I\ar[r]^(0.51){\eta^{\mr{ad}}\ot\id} 
& \xt{n}{Y'}{q^*\ms{E}}{q^*\ms{F}}^{\tilde{}}\,\ot_{\Q_{X'}}(g')^*\hspace{-0.12em}I \ar[u]_{e^n_I}
}
\end{equation}
where $a$ is induced by the canonical map $\ms{F}\ra q_*q^*\ms{F}$.

To fit our application we assume \(R\) is henselian. Let \(g_{T}\co X_{T}=\Spec (R\ot_{\hspace{-0.08em}A}B)^{\tn{h}}\ra T\) denote the projection where the \(\tn{h}\) denotes henselisation in the canonical \(k\)-point. Let \(f_{T}\co Y_{T}\ra X_{T}\) denote the (ordinary) pullback of \(f\) to \(X_{T}\) and let $p_X\co X_T\ra X$ and $p_Y\co Y_T\ra Y$ denote the induced projections. Put \(\pi_{T}=g_{T}f_{T}\), \(\ms{F}_{T}=p_Y^{*}\ms{F}\), and so on. Let $h\co X_T\ra X'$ denote the henselisation map and $h_Y\co Y_T\ra Y'$ the  pullback of $h$. Flat base change by $h$ gives a canonical isomorphism (e.g. by Lazard's theorem \cite[\href{https://stacks.math.columbia.edu/tag/058G}{Theorem 058G}]{SP}, \cite[\href{https://stacks.math.columbia.edu/tag/07TB}{Lemma 07TB}]{SP} and the local to global spectral sequence):
\begin{equation}\label{eq.hbc}
\begin{aligned} 
h^*\xt{n}{Y'}{q^*\hspace{-0.2em}\ms{E}}{q^*\hspace{-0.2em}\ms{F}\ot_{\Q_{Y'}}(\pi')^*I}^{\tilde{}}
& \cong\xt{n}{Y_T}{\ms{E}_T}{\ms{F}_T\ot_{\Q_{Y_T}} \pi_T^*I}^{\tilde{}}
\end{aligned}
\end{equation}
There is also an isomorphism of $\Q_{X_T}$-modules 
\begin{equation}
s\co\xt{n}{Y}{\ms{E}}{\ms{F}}_{T}^{\tilde{}}\hspace{0.06em}\ot_{\Q_{X_T}} g_T^*I\xra{\;\;\simeq\;\;}
h^*\hspace{-0.18em}\left[\mr{pr}_X^*\xt{n}{Y}{\ms{E}}{\ms{F}}^{\tilde{}}\,\ot_{\Q_{X'}}(g')^*I\right].
\end{equation}
Define the \(\Q_{X_{T}}\)-linear (henselian)
\emph{base change map} 
\begin{equation}\label{eq.c}
c^{n}_{I}\co \xt{n}{Y}{\ms{E}}{\ms{F}}_{T}^{\tilde{}}\hspace{0.06em}\ot_{\Q_{X_T}} g_T^*I 
\lra 
\xt{n}{Y_{T}}{\ms{E}_{T}}{{\ms{F}_{T}}\hspace{0.12em}\ot_{\Q_{Y_T}}\hspace{0.12em}\pi_{T}^{*}I}^{\tilde{}}
\end{equation}
as the composition of $h^*(b^n_{I})\ccirc s$ with \eqref{eq.hbc}.
Put \(X_{0}=X{\times}_{S}\Spec k\), $Y_0=Y{\times}_XX_0$, let \(\ms{E}_{0}\) denote the pullback of \(\ms{E}\) to \(Y_{0}\), and so on.
\begin{prop}\label{prop.extbc}
Assume the base change map
\begin{equation*}
c^{n}_{k}\co\xt{n}{Y}{\ms{E}}{\ms{F}}^{\tilde{}}_0\lra \xt{n}{Y_{0}}{\ms{E}_{0}}{\ms{F}_{0}}^{\tilde{}}
\end{equation*}
is surjective\textup{.} Then\textup{:}
\begin{enumerate}[leftmargin=2.4em, label=\textup{(\roman*)}]
\item For all local maps \(T\ra S\) and quasi-coherent \(\Q_{T}\)-modules \(I\)\textup{,} the base change map \(c_{I}^{n}\) is an isomorphism\textup{.}
\item The following statements are equivalent\textup{:}
\begin{enumerate}[leftmargin=1.8em, label=\textup{(\alph*)}]
\item \(c_{k}^{n-1}\) is surjective\textup{.}
\item The \(\Q_{X}\)-module \(\xt{n}{Y}{\ms{E}}{\ms{F}}^{\tilde{}}\) is \(S\)-flat\textup{.}
\end{enumerate}
\end{enumerate}
\end{prop}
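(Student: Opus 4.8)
The plan is to reduce both statements to Grothendieck's classical cohomology‑and‑base‑change theorem (see \cite{SP}), applied not to a single $S$-flat sheaf but to the object $\BB{R}f_{*}\BB{R}\sHom_{Y}(\ms{E},\ms{F})$ of the derived category of $\Q_{X}$-modules. (We may assume $f$ is projective, as it is in all our applications; the general proper case needs only notational changes, using the Grothendieck complex of a proper morphism in place of the explicit resolution below.) The first step is to produce a good representing complex. Since $\ms{E}$ is coherent and $S$-flat, choose a resolution $\ms{P}_{\bullet}\ra\ms{E}$ by finite direct sums of very negative twists $\Q_{Y}(-d_{p})$ with $d_{0}\ll d_{1}\ll\dotsb$; $S$-flatness of $\ms{E}$ guarantees that this stays a resolution after any base change $S'\ra S$. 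Then $\sHom_{Y}(\ms{P}_{\bullet},\ms{F})$ is a bounded‑below complex of $S$-flat quasi-coherent $\Q_{Y}$-modules representing $\BB{R}\sHom_{Y}(\ms{E},\ms{F})$ whose formation commutes with base change, and for a fixed degree $n$ the twists can be taken large enough that, in the relevant range, $\mr{R}^{q}\hspace{-0.2em}f_{*}\sHom_{Y}(\ms{P}_{p},\ms{F})=0$ for $q>0$ and $\ms{K}^{p}:=f_{*}\sHom_{Y}(\ms{P}_{p},\ms{F})$ is a coherent $S$-flat $\Q_{X}$-module whose formation commutes with base change (Serre vanishing, together with the standard fact that $f_{*}\ms{F}(d)$ commutes with base change for $d\gg 0$). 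The hyper‑direct‑image spectral sequence then collapses to the $q=0$ row and yields a bounded‑below complex $\ms{K}^{\bullet}$ of coherent $S$-flat $\Q_{X}$-modules together with functorial isomorphisms $\cH^{n}(\ms{K}^{\bullet}\ot_{A}M)\cong\xt{n}{Y_{M}}{\ms{E}_{M}}{\ms{F}_{M}}$ for all $A$-modules $M$. As henselisation is flat and hence commutes with $\BB{R}f_{*}$, with $\BB{R}\sHom$, and with forming cohomology, the same complex computes the groups occurring in \eqref{eq.c} after tensoring, so that under these identifications $c^{n}_{I}$ becomes the canonical comparison map $\cH^{n}(\ms{K}^{\bullet})\ot I\ra\cH^{n}(\ms{K}^{\bullet}\ot I)$.

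With $\ms{K}^{\bullet}$ in hand, both assertions are Grothendieck's linear‑algebra lemma, now carried out relatively over $X$ (equivalently over the Noetherian ring $R=\Gamma(X,\Q_{X})$, with flatness taken over $A$). For (i): the hypothesis says that $c^{n}_{k}\co\cH^{n}(\ms{K}^{\bullet})\ot_{A}k\ra\cH^{n}(\ms{K}^{\bullet}\ot_{A}k)$ is surjective; truncating $\ms{K}^{\bullet}$ near degree $n$ and applying Nakayama's lemma over the local ring $A$ shows that the image of the relevant map of $S$-flat coherent sheaves is an $S$-flat direct summand, so the half‑exact functor $M\mapsto\cH^{n}(\ms{K}^{\bullet}\ot_{A}M)$ is right exact at the closed point, whence $c^{n}_{M}$ is an isomorphism for every $A$-module $M$ --- in particular for any quasi-coherent $\Q_{T}$-module $I$ on a local $T\ra S$. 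For (ii): still assuming $c^{n}_{k}$ surjective, the same truncation shows that $c^{n-1}_{k}$ is surjective if and only if the image of $\ms{K}^{n-1}\ra\ms{K}^{n}$ is an $S$-flat direct summand of the (now locally split) kernel $\Ker(\ms{K}^{n}\ra\ms{K}^{n+1})$, which holds if and only if the quotient $\xt{n}{Y}{\ms{E}}{\ms{F}}=\cH^{n}(\ms{K}^{\bullet})$ is $S$-flat; this is exactly the equivalence of (a) and (b).

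The main obstacle is the first paragraph: establishing cohomology and base change for the derived sheaf‑Hom $\BB{R}f_{*}\BB{R}\sHom_{Y}(\ms{E},\ms{F})$ rather than for an honest $S$-flat sheaf. Two points require care. First, one must resolve $\ms{E}$ --- not $\ms{F}$ --- by locally free sheaves compatibly with base change, which forces the use of an $f$-ample line bundle and of increasingly negative twists, so that the truncated $f_{*}$-complex sees only the $q=0$ row; this is where projectivity of $f$ (or a Chow‑lemma reduction) enters. Second, $\xt{n}{Y}{\ms{E}}{\ms{F}}$ is coherent over $\Q_{X}$ but in general not finite over $A$, since $X\ra S$ need not be proper; hence Grothendieck's lemma must be applied in its relative form over $X$, using finite generation over $R$ and flatness over $A$ rather than finite freeness over $A$. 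Once the complex $\ms{K}^{\bullet}$ of $S$-flat coherent $\Q_{X}$-modules has been produced, the remaining arguments are purely formal and identical to the classical ones.
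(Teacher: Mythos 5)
Your route is genuinely different from the paper's. The paper constructs no complex at all: it observes that \(F^{n}(I)=\xt{n}{Y}{\ms{E}}{\ms{F}\ot\pi^{*}I}\) is a half-exact \(\delta\)-functor from coherent \(\Q_{S}\)-modules to coherent \(\Q_{X}\)-modules (coherence uses properness of \(f\), \cite[3.2.1]{EGAIII1} and the local-to-global spectral sequence), that it commutes with filtered direct limits, and then quotes Ogus--Bergman \cite[5.1-2]{ogu/ber:72}, whose exchange and flatness criteria are exactly statements (i) and (ii) in this ``finite over \(R\), flat over \(A\)'' situation. Your plan -- produce an explicit Grothendieck-type complex \(\ms{K}^{\bullet}\) of \(S\)-flat coherent \(\Q_{X}\)-modules computing \(\xt{*}{Y_{M}}{\ms{E}_{M}}{\ms{F}_{M}\ot\pi_{M}^{*}(-)}\) universally, by resolving \(\ms{E}\) by sums of negative twists and pushing forward after twisting enough to kill higher direct images -- is a sound alternative when \(f\) is projective, and the technical points you isolate (resolving the first argument, collapse of the hyperdirect-image spectral sequence in the relevant range, flatness and base-change compatibility of \(f_{*}\ms{F}(d)\) for \(d\gg 0\)) are correct and standard. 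What the functorial argument buys is that none of this machinery, and no projectivity, is needed.

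Two places are weaker than you indicate. First, the passage from projective to proper is not ``notational'': your complex exists because \(\ms{E}\) admits a resolution by negative twists of an \(f\)-ample invertible sheaf; for a proper non-projective \(f\) there is in general no locally free resolution of \(\ms{E}\) compatible with base change (the resolution property can fail), the Grothendieck complex of a proper morphism handles \(\BB{R}f_{*}\) of a single flat sheaf rather than \(\BB{R}f_{*}\BB{R}\sHom(\ms{E},-)\), and a Chow-lemma reduction for Ext base change is not routine. Since the proposition is invoked for arbitrary proper deformations in \(\df{}{Y_{0}/X_{0}}\), not only for maps given to us as blowing-ups, this is a genuine loss of generality in your proof, although it is repaired by the paper's argument. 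Second, the mechanism ``Nakayama over \(A\) gives an \(S\)-flat direct summand / locally split kernel'' is the classical argument for complexes of finite free \(A\)-modules and does not apply here: the \(\ms{K}^{p}\) are finite over \(R\) and flat over \(A\), but not projective over \(A\), so nothing splits. The correct relative form runs through the local criterion of flatness: with \(W=\coker(\ms{K}^{n-1}\ra\ms{K}^{n})\) and \(C=\coker(W\ra\ms{K}^{n+1})\) one has \(\coker(c^{n}_{M})\cong\Tor^{A}_{1}(C,M)\), so surjectivity of \(c^{n}_{k}\) is equivalent to \(\Tor^{A}_{1}(C,k)=0\), hence (as \(C\) is finite over the local \(A\)-algebra \(R\)) to \(A\)-flatness of \(C\); this gives (i) for all \(M\), and the same computation one degree lower, combined with the sequence \(0\ra\cH^{n}(\ms{K}^{\bullet})\ra W\ra\ms{K}^{n+1}\ra C\ra 0\), gives the equivalence in (ii). This Tor argument is precisely what the Ogus--Bergman results cited by the paper encapsulate, so once you replace the splitting language by it, your projective-case proof is complete.
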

\begin{proof}
We first establish a compatibility of $e^n_{p_*I}$ with $(\mr{pr}_{\hspace{-0.12em}X})_*(e^n_I)$. There is a natural isomorphism $\tau\co \ms{F}\ot_{\Q_Y}\pi^*p_*I\cong q_*(q^*\ms{F}\ot_{\Q_{Y'}}(\pi')^*\hspace{-0.12em}I)$ with adjoint $\tau^{\tn{ad}}$. Note that the canonical map $\ms{F}\ot\pi^*p_*I\ra q_*q^*(\ms{F}\ot\pi^*p_*I)$ composed with 
\begin{equation}
q_*\tau^{\tn{ad}}\co q_*q^*(\ms{F}\ot\pi^*p_*I)\lra q_*(q^*\ms{F}\ot(\pi')^*I)
\end{equation}
equals $\tau$. Let $u$ be an element in $I$ and let $\cdot u$ denote the map $\Q_S\ra p_*I$. To simplify the notation we also write $\cdot u$ for some of the induced maps like ${\id}\ot\pi^*(\cdot u)\co \ms{F}\ra \ms{F}\ot \pi^*p_*I$.
There is a diagram of $\Q_X$-linear maps:
\begin{equation}\label{eq.comp}
\xymatrix@C-9pt@R-8pt@H-6pt{
\xt{n}{Y}{\ms{E}}{\ms{F}}\ar[d]^(0.45){(\cdot u)_*}\ar[r]^(0.43){a} &
\xt{n}{Y}{\ms{E}}{q_*q^*\ms{F}}\ar[d]^(0.45){(q_*q^*(\cdot u))_*}\ar[r]^(0.48){\eta} &
\xt{n}{Y'}{q^*\ms{E}}{q^*\ms{F}}\ar[d]^(0.45){(q^*(\cdot u))_*}
\\
\xt{n}{Y}{\ms{E}}{\ms{F}\ot \pi^*p_*I}\ar[dr]_(0.45){\tau_*}\ar[r]^(0.43){a} &
\xt{n}{Y}{\ms{E}}{q_*q^*(\ms{F}\ot\pi^*p_*I)}\ar[d]^(0.44){(q_*\tau^\tn{ad})_*}\ar[r]^(0.48){\eta} &
\xt{n}{Y'}{q^*\ms{E}}{q^*(\ms{F}\ot\pi^*p_*I)}\ar[d]^(0.44){(\tau^\tn{ad})_*}
\\
&
\xt{n}{Y}{\ms{E}}{q_*(q^*\ms{F}\ot(\pi')^*I)}\ar[r]^(0.49){\eta} &
\xt{n}{Y'}{q^*\ms{E}}{q^*\ms{F}\ot(\pi')^*I}
}
\end{equation}
Since $\eta$ is functorial the diagram commutes. The composition $\tau^\tn{ad}\ccirc q^*(\cdot u)$ is the multiplication map $\cdot u\co q^*\ms{F}\ra q^*\ms{F}\ot (\pi')^*I$. 
There is a natural isomorphism 
\begin{equation}
\gamma\co (\mr{pr}_{\hspace{-0.12em}X})_*\hspace{-0.2em}\left[\mr{pr}_{\hspace{-0.12em}X}^*\xt{n}{Y}{\ms{E}}{\ms{F}}\ot_{\Q_{X'}}(g')^*\hspace{-0.12em}I\right]\cong \xt{n}{Y}{\ms{E}}{\ms{F}}\ot_{\Q_{X}}g^*\hspace{-0.12em}p_*I.
\end{equation}
With the compatibility in \eqref{eq.comp} one shows that $(\mr{pr}_{\hspace{-0.12em}X}\hspace{-0.12em})_*b^n_I
=(\mr{pr}_{\hspace{-0.12em}X}\hspace{-0.12em})_*[e^n_I{\ccirc}(\eta^{\tn{ad}}\ot \id){\ccirc} a]
$ equals $\eta\ccirc\tau_*\ccirc e^n_{p_*I}\ccirc\gamma$ where $\gamma$, $\tau_*\co \xt{n}{Y}{\ms{E}}{\ms{F}\ot\pi^*p_*I}\ra \xt{n}{Y}{\ms{E}}{q_*(q^*\ms{F}\ot(\pi')^*I}$ and $\eta$ are isomorphisms.  
In the condition $I$ is $\Q_{\Spec k}$ (denoted by $k$), $p$ is the closed embedding $T=\Spec k\ra S$, $g^*p_*k$ equals $\Q_{X_0}$ and $\xt{n}{Y}{\ms{E}}{\ms{F}}_{T}\hspace{0.06em}\ot_{\Q_{X_T}} g_T^*k$ is isomorphic to  $\xt{n}{Y}{\ms{E}}{\ms{F}}_0$. Since $X$ is henselian so is $X_0$ and $c_k^n=b_k^n$. Hence the assumption is equivalent to $e^n_{p_*k}$ being surjective. Finally, $p_*I$ is a quasi-coherent $\Q_S$-module \cite[\href{https://stacks.math.columbia.edu/tag/01XJ}{Lemma 01XJ}]{SP}. By \cite[5.1.2']{ogu/ber:72}, $e^n_{p\hspace{-0.05em}_*\hspace{-0.1em}I}$ is an isomorphism and then so is $b_I^n$ and $c^n_I$.
 
For (ii), $c_k^{n-1}$ is surjective if and only if $e_{p_*k}^{n-1}$ is surjective if and only if $F^n(\Q_S)$ is $S$-flat by \cite[5.2]{ogu/ber:72}. But $F^n(\Q_S)=\xt{n}{Y}{\ms{E}}{\ms{F}}$.
\end{proof}
The expression \emph{$\xt{n}{Y}{\ms{E}}{\ms{F}}$ commutes with base change} (or similar) means that the conclusion in Proposition \ref{prop.extbc} (i) holds.
\begin{ex}
Since \(\xt{n}{Y}{\Q_{Y}}{\ms{F}}\cong \mr{H}^{n}(Y,\ms{F})\), Proposition \ref{prop.extbc} gives a variant of global cohomology and base change without simultaneous properness and flatness (so apparently not covered by \cite[7.7.5]{EGAIII2}). For artinian base, see Wahl's \cite[0.4]{wah:76}.
\end{ex}
\begin{cor}\label{cor.extbc}
Assume $\xt{n+1}{Y_{0}}{\ms{E}_{0}}{\ms{F}_{0}}=0$\textup{.} Then $\xt{n}{Y}{\ms{E}}{\ms{F}}$ commutes with base change\textup{.} If furthermore $c^{n-1}_k$ is surjective, then $\xt{n}{Y}{\ms{E}}{\ms{F}}^{\tilde{}}$ is $S$-flat and hence a deformation of $\xt{n}{Y_{0}}{\ms{E}_{0}}{\ms{F}_{0}}^{\tilde{}}$\,\textup{.}
\end{cor}
\begin{proof}
Since $\xt{n+1}{Y_{0}}{\ms{E}_{0}}{\ms{F}_{0}}=0$, $c^{n+1}_k$ is surjective and by Proposition \ref{prop.extbc} (i) an isomorphism. Since $\xt{n+1}{Y}{\ms{E}}{\ms{F}}^{\tilde{}}$ is coherent, Nakayama's lemma implies that $\xt{n+1}{Y}{\ms{E}}{\ms{F}}^{\tilde{}}=0$ and in particular is $S$-flat. By Proposition \ref{prop.extbc} (ii), $c^n_k$ is surjective and by Proposition \ref{prop.extbc} (i), $\xt{n}{Y}{\ms{E}}{\ms{F}}
$ commutes with base change. If in addition $c^{n-1}_k$ is surjective, then $\xt{n}{Y}{\ms{E}}{\ms{F}}^{\tilde{}}$ is $S$-flat by Proposition \ref{prop.extbc} (ii).
\end{proof}
\section{Deformations of pairs with partial resolutions}\label{sec.def}
A pair \((X,\ms{F})\) is a scheme \(X\) and a coherent \(\Q_{X}\)-module \(\ms{F}\). A map of pairs \((X_{2},\ms{F}_{2})\ra (X_{1},\ms{F}_{1})\) is a scheme map \(p\co X_{2}\ra X_{1}\) and a map of \(\Q_{X_{2}}\)-modules \(\alpha\co p^{*}\ms{F}_{1}\ra \ms{F}_{2}\). 
One obtains a category of pairs. 
If \(X\ra S\) is a scheme map then the pair \((X,\ms{F})\) is flat over \(S\) if \(X\) and \(\ms{F}\) both are \(S\)-flat.
Let \(\cat{H}_{k}\) be the category of affine schemes \(S=\Spec A\) above \(\Spec k\) where \(A\) is a (noetherian) local henselian \(k\)-algebra. 
Fix a singularity \(X_{0}=\Spec B_{0}\) and a coherent \(\Q_{X_{0}}\)-module \(M_{0}\). There is a fibred category \(\cdf{}{(X_{0},M_{0})}/\cat{H}_{k}\) of deformations of the pair; extensions
\begin{equation}\label{eq.def}
(X_{0},M_{0})\lra (X,M)
\end{equation}
flat over \(\Spec k\ra S\) in \(\cat{H}_{k}\) where \(X=\Spec B\) is assumed to be algebraic over \(S\), i.e.\ \(B\) is given as the henselisation of a finite type \(A\)-algebra in a closed point.
A morphism in \(\cdf{}{(X_{0},M_{0})}\) above a map \(S'\ra S\) in \(\cat{H}_{k}\) is a map of pairs \((p,\alpha)\co(X',M')\ra(X,M)\) above \((X_{0},M_{0})\), such that the map of schemes is cartesian in the category of henselian local schemes:
\begin{equation}\label{eq.hsquare}
\xymatrix@C+6pt@R-8pt@H-6pt{
X'\ar[d]\ar[r]^{p} & X\ar[d] \\
S'\ar@{}[ur]|(0.53){\hspace{0.12em}\Box^{\tn{h}}}\ar[r] & S
}
\end{equation}
and \(\alpha\co p^{*}M\ra M'\) is an isomorphism. Given a deformation \eqref{eq.def} and a map \(S'\ra S\) in \(\cat{H}_{k}\) there exists a base change as in \eqref{eq.hsquare} and so the cartesian property holds. Identifying isomorphic objects defines a deformation functor \(\df{}{(X_{0},M_{0})}\co \cat{H}_{k}\ra \Sets\).
 
We need conditions on \((X_{0},M_{0})\) and a birational map \(Y_{0}\ra X_{0}\) that imply the conditions in Corollary \ref{cor.blowup} for all deformations. 
\begin{lem}\label{lem.U}
Assume \(X_{0}\) is integral and \(M_{0}\) is torsion free. There is an \(M_{0}\)-regular element \(0\neq t_{0}\in\Gamma(\Q_{X_{0}})\) with \(U_{0}:=D(t_{0})\) such that \(M_{0}{}_{|U_{0}}\) is locally free\textup{.} Let \((X, M)\) be a deformation of \((X_{0}, M_{0})\)\textup{.} 
\begin{enumerate}[leftmargin=2.4em, label=\textup{(\roman*)}]
\item For any lifting \(t\in\Gamma(\Q_{X})\) of \(t_{0}\)\textup{,} the open subscheme \(U=D(t)\) is dense in \(X\) and \(M_{|U}\) is locally free\textup{.} In particular\textup{,} for any \(p\) as in \eqref{eq.hsquare}, \(p^{-1}(U)\) is dense in \(X'\)\textup{.}
\end{enumerate}
Moreover\textup{,} let \(f\co Y\ra X\) be a proper scheme map with \(Y\) \(S\)-flat such that the central fibre \(f_{0}\co Y_{0}\ra X_{0}\) is an isomorphism above \(U_{0}\) and \(t_{0}\) defines a Cartier divisor on \(Y_{0}\)\textup{.} 
\begin{enumerate}[resume*]
\item The map \(f\) is an isomorphism above \(U\) and \(f^{-1}(U)\) is dense in \(Y\)\textup{.}
\end{enumerate}  
\end{lem}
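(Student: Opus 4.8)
The plan is to prove the three assertions in order: the existence of $t_0$ is a spreading-out statement over the domain $B_0$, while parts (i) and (ii) both reduce to the local criterion for flatness, to the fibrewise criterion for freeness, and --- for the isomorphy in (ii) --- to the cohomology-and-base-change result of Proposition \ref{prop.extbc}. For the first point: since $M_0$ is torsion free over the domain $B_0$, a full-rank free submodule $F\sbeq M_0$ has torsion cokernel (shrink $F$ if $M_0$ happens to be free), so that $0\neq\ann(M_0/F)\sbeq\fr{m}_{B_0}$; any $0\neq t_0$ in this ideal has $M_0{}_{|D(t_0)}=F{}_{|D(t_0)}$ free and, being a nonzerodivisor on $B_0$, is $M_0$-regular. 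Set $U_0=D(t_0)$.

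For part (i), let $t\in\Gamma(\Q_X)=B$ reduce to $t_0$ modulo $\fr{m}_AB$, so $t\in\fr{m}_B$. Since $B$ is $A$-flat ($X$ being $S$-flat) and $t$ reduces to the nonzerodivisor $t_0$ on $B_0=B\ot_Ak$, the local criterion for flatness \cite{SP} gives that $t$ is a nonzerodivisor on $B$, that $M$ is $t$-torsion free, and that $B/tB$ and $M/tM$ are $S$-flat; in particular $t$ lies in no minimal prime of $B$, so $U=D(t)$ is dense in $X$. As $M$ is $S$-flat and $M_0{}_{|U_0}=(M{}_{|U})\ot_{\Q_S}k$ is locally free, the fibrewise criterion for a finitely presented module shows $M{}_{|U}$ is locally free (the passage to all of $U$, and not merely to a neighbourhood of $U_0$, is the delicate point, addressed below). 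Finally, for $p\co X'\ra X$ as in \eqref{eq.hsquare} the pair $(X',M')$ is again a deformation of $(X_0,M_0)$ with $X'_0=X_0$, and the image $t'$ of $t$ in $\Gamma(\Q_{X'})$ reduces to $t_0$; running the previous argument for $(X',M')$ and $t'$ shows $p^{-1}(U)=D(t')$ is dense in $X'$.

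For part (ii), let $t_Y\in\Gamma(\Q_Y)$ be the image of $t$ under $\Gamma(\Q_X)\ra\Gamma(\Q_Y)$; its reduction modulo $\fr{m}_A$ is the image of $t_0$ in $\Gamma(\Q_{Y_0})$, a nonzerodivisor because $t_0$ defines a Cartier divisor on $Y_0$. Since $Y$ is $S$-flat, the local criterion for flatness (stalkwise on $Y$) again makes $t_Y$ a nonzerodivisor on $\Q_Y$, so $f^{-1}(U)=D(t_Y)$ lies in no minimal prime of $Y$ and is dense. For the isomorphy statement I would study $g:=f{}_{|f^{-1}(U)}\co f^{-1}(U)\ra U$: it is proper with $S$-flat source and target, and its closed fibre $g_0\co f_0^{-1}(U_0)\ra U_0$ is an isomorphism, so $g$ is quasi-finite along the closed fibre and hence, by properness and Zariski's main theorem, finite over some open $W\sbeq U$ with $U_0\sbeq W$. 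Applying Proposition \ref{prop.extbc} to the proper $g$ with $\ms{E}=\ms{F}=\Q$ (both $S$-flat), and using $\mr{R}^{q}(g_0)_*\Q=0$ for $q>0$ together with the isomorphism $\Q_{U_0}\xra{\sim}(g_0)_*\Q$, one obtains that $g_*\Q$ is $S$-flat over $W$ and that $\Q_W\ra g_*\Q$ is an isomorphism on the closed fibre, hence an isomorphism over a neighbourhood of $U_0$; there $g$ is an isomorphism. Running this fibrewise over all of $S$ --- legitimate in the applications, where $f$ is a partial resolution of a rational surface singularity and so $\mr{R}^{>0}$ of the structure sheaf vanishes on every fibre --- yields that $g$, hence $f$, is an isomorphism above all of $U$.

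The routine part --- producing $t_0$ and transporting the ``nonzerodivisor'', ``flat'' and ``locally free'' conclusions from the closed fibre via the local criterion --- is quick. The step I expect to be the real obstacle is the passage from the closed fibre to \emph{all} of $U$ in the two positive assertions ``$M{}_{|U}$ is locally free'' and ``$f$ is an isomorphism above $U$'': because $S$ need not be Artinian, a plain Nakayama argument does not control the behaviour over the non-closed points of $S$, and it is precisely there that one must bring in Proposition \ref{prop.extbc} and the rationality of the fibres of $f$.
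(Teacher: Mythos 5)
Your preparatory steps (the choice of $t_{0}$, regularity of $t$ and $M$-regularity via the local flatness criterion, density of $U$, of $p^{-1}(U)$ and of $f^{-1}(U)$) agree with the paper. The genuine gap is exactly the point you flag and then leave open: both positive assertions, ``$M_{|U}$ is locally free'' and ``$f$ is an isomorphism above $U$'', are only obtained by your argument near the central fibre, because the fibrewise criteria and Proposition \ref{prop.extbc} compare only with the fibre over the closed point of $S$. This is not a removable inconvenience of bookkeeping: when $S$ is not artinian, the closed points of $U=D(t)$ are the primes $\fr{p}$ with $t\notin\fr{p}$ and $\dim B/\fr{p}=1$, and such points need not lie in the central fibre at all, so no openness/specialisation argument propagates the conclusion from a neighbourhood of $U_{0}$ to all of $U$. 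Your proposed repair for (ii) --- ``running this fibrewise over all of $S$'', justified by rationality of the fibres of $f$ --- is not available: the lemma assumes nothing about the fibres of $f$ over non-closed points of $S$ (rationality, or even that they are partial resolutions, is not a hypothesis and is not known at this stage of the paper), Proposition \ref{prop.extbc} gives no handle on those fibres, and even granting the extra hypotheses no actual argument is supplied; for (i) no repair is offered at all.

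For comparison, the paper does not argue pointwise in (i): it chooses a surjection $\alpha\co B_{t}^{\oplus n}\ra M_{t}$ and splits it globally, the obstruction being the single module $\xt{1}{B_{t}}{M_{t}}{\ker\alpha}$, whose vanishing is deduced from Proposition \ref{prop.extbc} (the Ogus--Bergman half-exact-functor mechanism applied in the $A$-module variable) together with the vanishing of the corresponding group over $B_{0,t_{0}}$, where $M_{0}{}_{|U_{0}}$ is locally free. The Nakayama-type conclusion is thus drawn over the local henselian ring $A$, which has a unique closed point, rather than over $B_{t}$ (where $\fr{m}_{A}B_{t}$ is not contained in the Jacobson radical, so a naive Nakayama would fail); once $M_{t}$ is projective, local freeness holds on all of $U$ at once. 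For (ii) the paper argues as you do near the central fibre (fibrewise flatness, then {\'e}taleness on fibres), but then finishes by ``{\'e}tale and proper implies finite'' and Nakayama tested at the closed points of $U$, not by any statement about the fibres of $f$ over $S$. The missing idea in your proposal is precisely this conversion of the ``all of $U$'' claims into the vanishing of one obstruction module (respectively into a statement checkable at closed points after finiteness) that Proposition \ref{prop.extbc} controls through the closed fibre of $S$.
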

\begin{proof}
By \cite[Chap. II, \S 5.1, Prop. 2]{bou:98} there exists a \(t_{0}\) such that \(M_{0}{}_{|U_{0}}\) is locally free. Put \(B=\Gamma(\Q_{X})\). By \cite[19.2.4]{EGAIV4}, \(t\) is \(B\)-regular and \(M\)-regular and in particular \(U\) is dense in \(X\). A choice of \(n\) generators gives a surjection \(\alpha\co B_{t}^{\oplus n}\ra M_{t}\). Since $M_t\ot k\cong (M_0)_{t_0}$ is free, $\xt{1}{U_0}{M_t\ot k}{\ker\alpha\ot k}=0$. Then \(\xt{1}{B_{t}}{M_{t}}{\ker\alpha}=0\) by Proposition \ref{prop.extbc} (i) and Nakayama's lemma. Hence $\alpha$ splits. Since the image \(t'\in\Gamma(\Q_{X'})\) of \(t\) lifts \(t_{0}\), \(p^{-1}(U)=D(t')\) is dense as above.

For (ii) note that \(t\) as global section of \(Y\) defines a Cartier divisor with complement \(f^{-1}(U)\) which hence is dense in \(Y\). Consider a closed point \(x\) in \(U\), i.e. \(x\in U_{0}\), with \(y\in f_{0}^{-1}(U_{0})\) the unique preimage of \(x\). Then \(f\) is flat at \(y\) by \cite[11.3.10]{EGAIV3} and {\'e}tale by \cite[17.6.3 e]{EGAIV4}. {\'E}tale and proper implies finite and Nakayama's lemma implies \(f\) is an isomorphism above \(U\).   
\end{proof} 
\begin{defn}\label{def.fib}
Fix a pair \((X_{0},M_{0})\) with \(X_{0}\) an integral and Cohen-Macaulay singularity and \(M_{0}\) torsion free. Let \((f_{0},\alpha_{0})\co (Y_{0},\ms{M}_{0})\ra (X_{0},M_{0})\) be a map of pairs such that \(f_{0}\) is a partial resolution with \(\dim E(f_{0})\leq 1\), \(\ms{M}_{0}\) is a coherent and locally free
\(\Q_{Y_{0}}\)-module, and the adjoint of \(\alpha_{0}\) is an isomorphism \(\alpha_{0}^{\tn{ad}}\co M_{0}\cong (f_{0})_{*}\ms{M}_{0}\). 
We also denote \((f_{0},\alpha_{0})\) by \((Y_{0}/X_{0},\ms{M}_{0}/M_{0})\).

Let \(\cdf{}{(Y_{0}/X_{0},\ms{M}_{0}/M_{0})}\) be the category where the objects are extensions of maps of pairs 
\begin{equation}\tag{{\(\ast\)}}
\xymatrix@C+6pt@R-3pt@H-6pt{
(Y_{0},\ms{M}_{0})\ar[r]\ar[d]_(0.42){(f_{0},\alpha_{0})} & (Y,\ms{M})\ar[d]^(0.42){(f,\alpha)} & 
\\
(X_{0},M_{0})\ar[r] & (X,M)
}
\end{equation}
over some \(\Spec k\ra S\) in \(\cat{H}_{k}\) with the following properties\textup{:}
\begin{enumerate}[leftmargin=2.4em, label=\textup{(\roman*)}]
\item \((Y,\ms{M})\) is flat over \(S\) and \((Y_{0},\ms{M}_{0})\cong(Y,\ms{M})\times_{X}X_{0}\)
\item \((X_{0},M_{0})\ra (X,M)\) is an object over \(\Spec k\ra S\) in \(\cdf{}{(X_{0},M_{0})}\)
\item \(f\) is proper\textup{,} \(\mr{R}^{1}\hspace{-0.15em}f_{*}\Q_{Y}\) and \(\mr{R}^{1}\hspace{-0.15em}f_{*}\ms{M}\) are \(S\)-flat
\end{enumerate}
We call \((f,\alpha)\) a deformation of \((f_{0},\alpha_{0})\).

A map \((f',\alpha')\ra(f,\alpha)\) in \(\cdf{}{(Y_{0}/X_{0},\ms{M}_{0}/M_{0})}\) over a map \(g\co S'\ra S\) in \(\cat{H}_{k}\) is a commutative diagram of deformations of \((f_{0},\alpha_{0})\)
\begin{equation}\tag{{\(\ast\ast\)}}
\xymatrix@C+6pt@R-3pt@H-6pt{
(Y',\ms{M}')\ar[r]^{(q,\beta)}\ar[d]_(0.42){(f',\alpha')} & (Y,\ms{M})\ar[d]^(0.42){(f,\alpha)} & 
\\
(X',M')\ar[r]^{(p,\gamma)} & (X,M)
}
\end{equation}
\end{defn}
\begin{prop}\label{prop.fib}
The forgetful functor \(\cdf{}{(Y_{0}/X_{0},\ms{M}_{0}/M_{0})}\ra \cat{H}_{k}\) is a fibred category\textup{.}
\end{prop}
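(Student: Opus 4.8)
The plan is to verify the defining property of a fibred category directly. Given an object $(f,\alpha)$ of $\cdf{}{(Y_{0}/X_{0},\ms{M}_{0}/M_{0})}$ over some $S$ in $\cat{H}_{k}$ and a morphism $g\co S'\ra S$ in $\cat{H}_{k}$, I would construct a lift $(f',\alpha')\ra(f,\alpha)$ over $g$ and show it is cartesian; since composites of cartesian morphisms are automatically cartesian, this suffices.

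To build the lift I would first use that $\cdf{}{(X_{0},M_{0})}$ is already a fibred category — the henselian base change of \eqref{eq.hsquare} always exists — to pull the bottom row of $(\ast)$ back along $g$, obtaining a cartesian morphism $(p,\gamma)\co(X',M')\ra(X,M)$ in $\cdf{}{(X_{0},M_{0})}$. Then I would set $Y'=Y\times_{X}X'$ with the projections $q\co Y'\ra Y$ and $f'\co Y'\ra X'$, take $\ms{M}'=q^{*}\ms{M}$, and define $\alpha'$ as $f'^{*}M'=q^{*}f^{*}M\xra{q^{*}\alpha}\ms{M}'$. The next step is to check that this data, together with the evident maps out of $(Y_{0},\ms{M}_{0})$, satisfies (i)--(iii) of Definition \ref{def.fib}. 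Flatness of $(Y',\ms{M}')$ over $S'$ is preserved under base change, and since $g$ is local the closed fibre is $(Y',\ms{M}')\times_{X'}X_{0}=(Y,\ms{M})\times_{X}X_{0}=(Y_{0},\ms{M}_{0})$, compatibly with $(f_{0},\alpha_{0})$; this gives (i). Condition (ii) is exactly the assertion that $(p,\gamma)$ lies in $\cdf{}{(X_{0},M_{0})}$, and properness of $f'$ in (iii) is immediate.

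The main obstacle is the remaining part of (iii): that $\mr{R}^{1}f'_{*}\Q_{Y'}$ and $\mr{R}^{1}f'_{*}\ms{M}'$ are $S'$-flat, and that the adjunction $M\cong f_{*}\ms{M}$ base-changes to an isomorphism $M'\cong f'_{*}\ms{M}'$. Here I would apply Proposition \ref{prop.extbc} with $\ms{E}=\Q_{Y}$ and $\ms{F}\in\{\Q_{Y},\ms{M}\}$. The observation that makes this work is that the closed fibre of $f$ is $E(f_{0})$, of dimension $\leq 1$; since $X$ is local, upper semicontinuity of fibre dimension over a proper map forces $\dim f^{-1}(x)\leq 1$ for every $x\in X$, so $\mr{R}^{q}f_{*}\ms{F}=0$ for $q\geq 2$. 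Thus $c^{2}_{k}$ is trivially surjective, so Proposition \ref{prop.extbc}\,(ii) (with $n=2$, using that $\mr{R}^{2}f_{*}\ms{F}=0$ is $S$-flat) gives that $c^{1}_{k}$ is surjective; the hypothesised $S$-flatness of $\mr{R}^{1}f_{*}\ms{F}$ then gives, again by Proposition \ref{prop.extbc}\,(ii) with $n=1$, that $c^{0}_{k}$ is surjective. By Proposition \ref{prop.extbc}\,(i) the formation of $f_{*}\ms{F}$ and of $\mr{R}^{1}f_{*}\ms{F}$ therefore commutes with arbitrary local base change; applying this along $g$ identifies $f'_{*}\ms{F}$ and $\mr{R}^{1}f'_{*}\ms{F}$ with $(f_{*}\ms{F})_{X'}$ and $(\mr{R}^{1}f_{*}\ms{F})_{X'}$. (Alternatively, one runs the same bootstrap directly for $f'$, whose closed fibre is again $E(f_{0})$.) Combined with the $S$-flatness of $\mr{R}^{1}f_{*}\Q_{Y}$ and $\mr{R}^{1}f_{*}\ms{M}$ this yields the required $S'$-flatness, and base-changing $M\cong f_{*}\ms{M}$ gives $M'\cong f'_{*}\ms{M}'$; so $(f',\alpha')$ is an object and $(q,\id)$ together with $(p,\gamma)$ is a morphism $(f',\alpha')\ra(f,\alpha)$ over $g$.

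Finally, to see this lift is cartesian, I would take an object $(f'',\alpha'')/S''$ with a morphism to $(f,\alpha)$ over $h\co S''\ra S$ factoring as $h=gh'$, and produce the unique morphism to $(f',\alpha')$ over $h'$: its $X$-part is a morphism $(X'',M'')\ra(X,M)$ in $\cdf{}{(X_{0},M_{0})}$ over $h$, which by cartesianness of $(X',M')\ra(X,M)$ there factors uniquely through $(X',M')$ over $h'$; the universal property of $Y'=Y\times_{X}X'$ then yields a unique $Y''\ra Y'$ over $X'$ compatible with $Y''\ra Y$, and since $\ms{M}'=q^{*}\ms{M}$ is a pullback the module maps factor uniquely as well. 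Uniqueness at each stage gives uniqueness overall. Beyond this bookkeeping, the one point needing care throughout is to take all the relevant fibre products in the category of henselian local schemes (as in \eqref{eq.hsquare}) — this is why $X'$ rather than the naive $X\times_{S}S'$ appears, and why $Y'$ is still proper over a henselian local base, so that Proposition \ref{prop.extbc} applies to $f'$.
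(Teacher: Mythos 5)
Your construction is exactly the paper's: take the henselian base change \(X'=X\times^{\tn{h}}_{S}S'\), the ordinary pullback \(Y'=Y\times_{X}X'\) with \(\ms{M}'=q^{*}\ms{M}\) and the composite \(\alpha'\), and then use Proposition \ref{prop.extbc} (via the vanishing of \(\mr{R}^{2}\) on fibres of dimension \(\leq 1\)) to see that \(\mr{R}^{1}\) commutes with base change, so that \(\mr{R}^{1}f'_{*}\ms{M}'\cong p^{*}\mr{R}^{1}f_{*}\ms{M}\) and \(\mr{R}^{1}f'_{*}\Q_{Y'}\) are \(S'\)-flat. The extra details you supply (semicontinuity of fibre dimension, the explicit cartesianness check, and the base change of \(M\cong f_{*}\ms{M}\), which Definition \ref{def.fib} does not actually require of objects) are correct fillings-in of the same argument, so the proposal matches the paper's proof.
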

\begin{proof}
Suppose \((Y/X,\ms{M}/M)\) is an object in the fibre category \(\cdf{}{(Y_{0}/X_{0},\ms{M}_{0}/M_{0})}(S)\) as in diagram (\(\ast\)) and \(g\co S'\ra S\) in \(\cat{H}_{k}\). Then a diagram (\(\ast\ast\)) has to be produced which is cartesian over \(g\) in our category.  Put \(X'=X{\times}_{S}^{\tn{h}}S'\), \(f'\co Y'\ra X'\) equal to pullback of \(f\) along the projection \(p\), and sheaves \(\ms{M}'=q^{*}\ms{M}\), \(M'=p^{*}M\). The map \(\alpha'\) is given by the composition \((f')^{*}(p^{*}M)\cong q^{*}(f^{*}M)\xra{q^{*}\alpha}q^{*}\ms{M}\). 
This gives a map of deformations (\(\ast\ast\)) with (i), (ii) and \(f'\) proper. Since \(\mr{R}^{2}(f_{0})_{*}(-)=0\), \(\mr{R}^{1}\hspace{-0.15em}f_{*}\ms{M}\) commutes with base change by Corollary \ref{cor.extbc}. It follows that \(\mr{R}^{1}(f')_{*}\ms{M}'\cong p^{*}\mr{R}^{1}\hspace{-0.15em}f_{*}\ms{M}\) is \(S'\)-flat. Similarly for \(\mr{R}^{1}(f')_{*}\Q_{Y'}\), so (iii) holds.
\end{proof}
\begin{lem}\label{lem.fib}
Let \((f,\alpha)\co (Y,\ms{M})\ra (X,M)\) be an object in \(\cdf{}{(Y_{0}/X_{0},\ms{M}_{0}/M_{0})}(S)\)\textup{.}
\begin{enumerate}[leftmargin=2.4em, label=\textup{(\roman*)}]
\item The pair \((\Spec f_{*}\Q_{Y},f_{*}\ms{M})\) is an object in \(\cdf{}{(X_{0},M_{0})}(S)\) isomorphic to \((X,M)\)\textup{.}
\end{enumerate}
Furthermore\textup{,} assume that \(\ms{M}_{0}\) is generated by its global sections\textup{.}
\begin{enumerate}[resume*]
\item The sheaf \(\ms{M}\) is generated by its global sections\textup{.}
\item The map \(\bar{\alpha}\co f^{\tri}M\ra \ms{M}\) induced by \(\alpha\) is an isomorphism\textup{.}
\end{enumerate}
\end{lem}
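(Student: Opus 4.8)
The plan is to run a cohomology-and-base-change bootstrap with Proposition~\ref{prop.extbc} in order to upgrade the two flatness hypotheses of Definition~\ref{def.fib}(iii) into full base-change control of $f_{*}$, and then to read off (i)--(iii) by routine flatness and Nakayama arguments. Apply Proposition~\ref{prop.extbc} with $\ms{E}=\Q_{Y}$ and $\ms{F}$ equal to $\ms{M}$ or to $\Q_{Y}$, so that $\xt{n}{Y}{\Q_{Y}}{\ms{F}}=\mr{R}^{n}\hspace{-0.1em}f_{*}\ms{F}$. Since $f_{0}$ is proper with $\dim E(f_{0})\leq 1$ and $X_{0}$ is local, every fibre of $f_{0}$ has dimension $\leq 1$, hence $\mr{R}^{2}\hspace{-0.1em}(f_{0})_{*}(-)=0$ and $c^{2}_{k}$ is vacuously surjective; Proposition~\ref{prop.extbc}(i) then makes $c^{2}_{k}$ an isomorphism, so $\mr{R}^{2}\hspace{-0.1em}f_{*}\ms{F}=0$ by Nakayama on the local scheme $X$ (using $\fr{m}_{S}\Q_{X}\sbeq\fr{m}_{X}$). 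Thus $\mr{R}^{2}\hspace{-0.1em}f_{*}\ms{F}$ is $S$-flat, so Proposition~\ref{prop.extbc}(ii) shows $c^{1}_{k}$ is surjective; since $\mr{R}^{1}\hspace{-0.1em}f_{*}\ms{F}$ is $S$-flat by hypothesis, Proposition~\ref{prop.extbc}(ii) applied once more shows $c^{0}_{k}$ is surjective. Now Proposition~\ref{prop.extbc}(i) shows $c^{0}_{I}$ is an isomorphism for every local $T\ra S$ and every $I$, and Proposition~\ref{prop.extbc}(ii) with $n=0$, where $c^{-1}_{k}$ is trivially surjective, shows $f_{*}\ms{F}$ is $S$-flat. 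Hence $f_{*}\Q_{Y}$ and $f_{*}\ms{M}$ are $S$-flat, commute with arbitrary local base change, and restrict over $\Spec k$ to $(f_{0})_{*}\Q_{Y_{0}}=\Q_{X_{0}}$ and to $(f_{0})_{*}\ms{M}_{0}\cong M_{0}$.

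For (i): the canonical maps $\Q_{X}\ra f_{*}\Q_{Y}$ and $\alpha^{\tn{ad}}\co M\ra f_{*}\ms{M}$ are $\Q_{X}$-linear maps of $S$-flat modules which reduce modulo $\fr{m}_{S}$ to the canonical isomorphism $\Q_{X_{0}}=(f_{0})_{*}\Q_{Y_{0}}$ and to $\alpha_{0}^{\tn{ad}}$. Comparing cokernels, which vanish by Nakayama on $X$, and then kernels, which are annihilated by $\otimes_{\Q_{S}}k$ because $\Tor_{1}^{\Q_{S}}(f_{*}\Q_{Y},k)=0=\Tor_{1}^{\Q_{S}}(f_{*}\ms{M},k)$ by $S$-flatness, we see both maps are isomorphisms. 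This identifies $(\Spec f_{*}\Q_{Y},f_{*}\ms{M})$ with $(X,M)$ over $(X_{0},M_{0})$, and in particular exhibits it as an object of $\cdf{}{(X_{0},M_{0})}(S)$.

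Assume now that $\ms{M}_{0}$ is generated by its global sections. For (ii): via the base-change isomorphism $(f_{*}\ms{M})\otimes_{\Q_{S}}k\cong(f_{0})_{*}\ms{M}_{0}$, the counit $\ev\co f^{*}f_{*}\ms{M}\ra\ms{M}$ restricts over $\Spec k$ to the counit $f_{0}^{*}(f_{0})_{*}\ms{M}_{0}\ra\ms{M}_{0}$, which is surjective since $\ms{M}_{0}$ is globally generated; so $\coker\ev$ is coherent on $Y$ with $(\coker\ev)|_{Y_{0}}=0$, and since $f$ is proper and $X$ is local, the support of $\coker\ev$, were it nonempty, would meet the closed fibre of $f$, which lies in $Y_{0}$. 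Hence $\ev$ is surjective and $\ms{M}$ is globally generated. For (iii): by (i) the map $\alpha=\ev\circ f^{*}(\alpha^{\tn{ad}})$ is surjective. Choose $t_{0}$ as in Lemma~\ref{lem.U}, arranged so that moreover $f_{0}$ is an isomorphism over $U_{0}=D(t_{0})$ and $t_{0}$ cuts out a Cartier divisor on $Y_{0}$; lift $t_{0}$ to $t\in\Gamma(\Q_{X})$ and put $U=D(t)$, $Z=V(t)$. By Lemma~\ref{lem.U}, $f$ is an isomorphism over $U$, $f^{-1}(U)$ is dense in $Y$, and $t$ cuts out a Cartier divisor on $Y$; so $t$ is a nonzerodivisor on $\Q_{Y}$, and since $\ms{M}$ is $S$-flat with $t_{0}$ a nonzerodivisor on the locally free $\ms{M}_{0}$, also on $\ms{M}$. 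Over $f^{-1}(U)\cong U$ both $f^{*}M$ and $\ms{M}$ restrict to locally free sheaves of one and the same rank (the generic rank of $M\cong f_{*}\ms{M}$), so the surjection $\alpha$ restricts to an isomorphism there; hence $\Ker\alpha$ is supported on $f^{-1}(Z)=V(t)$ and $\Ker\alpha\sbeq\ms{H}^{0}_{f^{-1}Z}(f^{*}M)$, while conversely $\alpha\bigl(\ms{H}^{0}_{f^{-1}Z}(f^{*}M)\bigr)\sbeq\ms{H}^{0}_{f^{-1}Z}(\ms{M})=0$. Therefore $\Ker\alpha=\ms{H}^{0}_{f^{-1}Z}(f^{*}M)$, and the induced $\bar{\alpha}\co f^{\tri}M=f^{*}M/\Ker\alpha\ra\ms{M}$ is an isomorphism.

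The main obstacle is the bootstrap of the first paragraph: the real content of the lemma is that the modest flatness of $\mr{R}^{1}\hspace{-0.1em}f_{*}(-)$ already forces $f_{*}$ to be flat and to commute with base change. One must also be attentive to the fact that $X$ and $Y$ may be non-reduced, so that a statement like ``$\Ker\alpha$ is supported on the nowhere-dense set $f^{-1}(Z)$'' has to be converted into a statement about the nonzerodivisor $t$ rather than treated by a naive reducedness argument; granting the first paragraph, the remainder is routine.
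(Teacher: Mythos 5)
Your proof is correct, and its overall skeleton is the same as the paper's: the paper's one-line appeal to Proposition~\ref{prop.extbc} for part (i) is exactly the bootstrap you spell out in your first paragraph (vanishing of $\mr{R}^{2}$ on the closed fibre, then flatness of $\mr{R}^{1}\hspace{-0.1em}f_{*}$ forcing surjectivity of $c^{0}_{k}$, hence flatness of $f_{*}$ and base-change compatibility), followed by the Nakayama comparison of $\Q_{X}\ra f_{*}\Q_{Y}$ and $\alpha^{\tn{ad}}$; your part (ii) is likewise the paper's lifting of generating sections, phrased via the counit. The one genuine divergence is the injectivity step in (iii): the paper deduces it by restricting $\bar{\alpha}$ to the closed fibre, using $S$-flatness of $\ms{M}$ for exactness and Lemma~\ref{lem.U} together with Corollary~\ref{cor.blowup} to identify $(f^{\tri}M)_{|Y_{0}}$ with $f_{0}^{\tri}M_{0}$, whereas you work directly on $Y$, identifying $\Ker\alpha$ with $\ms{H}^{0}_{f^{-1}Z}(f^{*}M)$ because $t$ is $\ms{M}$-regular (again via flatness over $S$ and the regularity of $t_{0}$ on the locally free $\ms{M}_{0}$). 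Your variant is self-contained and avoids Corollary~\ref{cor.blowup}, at the cost of having to argue the $t$-regularity and the vanishing $\ms{H}^{0}_{f^{-1}Z}(\ms{M})=0$ by hand; the paper's route reuses machinery it needs anyway for later base-change arguments. One small point to make explicit: the identification $(f_{0})_{*}\Q_{Y_{0}}\cong\Q_{X_{0}}$, which you treat as canonical, rests on $f_{0}$ being proper birational onto a normal $X_{0}$ (the paper cites \cite[4.3.12]{EGAIII1} for this); this is the same implicit normality assumption the paper's own proof makes, so it is not a gap, but it deserves a citation.
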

\begin{proof}
(i) Note that \(f_{0}\) induces an isomorphism \(\Spec ((f_{0})_{*}\Q_{Y_{0}})\ra X_{0}\) since \(X_{0}\) is assumed to be normal and \(f_{0}\) birational; \cite[4.3.12]{EGAIII1}. Since $\mr{R}^2(f_0)_*(-)=0$, by Corollary \ref{cor.extbc}, $\mr{R}^{1}\hspace{-0.15em}f_{*}\ms{M}$ commutes with base change, and is $S$-flat by assumption. By Proposition \ref{prop.extbc} (ii) with $n=1$, $f_{*}\ms{M}$ commutes with base change, and is $S$-flat by Proposition \ref{prop.extbc} (ii) with $n=0$. In particular this holds for $\ms{M}=\Q_Y$. Hence the pair \((\Spec f_{*}\Q_{Y},f_{*}\ms{M})\) is a deformation of \((X_{0},M_{0})\), isomorphic to \((X,M)\) through the maps \(\Spec f^{\sharp}\) and \(\alpha^{\tn{ad}}\). 

(ii-iii) Since \(f_{*}\ms{M}\) is a deformation of \((f_{0})_{*}\ms{M}_{0}\), global sections generating \(\ms{M}_{0}\) lift to global sections generating \(\ms{M}\). Hence the map \(\bar{\alpha}\) is surjective; cf.\ \eqref{eq.glob}. But \(\bar{\alpha}\) is injective too since the strict transform here commutes with base change by Lemma \ref{lem.U} and Corollary \ref{cor.blowup},
and \(\ms{M}\) is \(S\)-flat.
\end{proof}
\begin{ex}\label{ex.fib}
Let \(X_{0}\) be a rational surface singularity, \(M_{0}\) a reflexive module and \(\ms{M}_{0}=f_{0}^{\tri}M_{0}\) (the topical case). Then \(\mr{R}^{1}\hspace{-0.15em}f_{*}\Q_{Y}=0\) follows from \(\mr{R}^{1}\hspace{-0.08em}(f_{0})_{*}\Q_{Y_{0}}=0\) by Proposition \ref{prop.extbc} (i) and Nakayama's lemma.
Since \(\ms{M}_{0}\) is generated by its global sections, \(\mr{R}^{1}\hspace{-0.08em}(f_{0})_{*}\ms{M}_{0}=0\) which implies \(\mr{R}^{1}\hspace{-0.15em}f_{*}\ms{M}=0\) again by Proposition \ref{prop.extbc} (i) and Nakayama's lemma. 
\end{ex}
\begin{cor}
Identifying isomorphic objects in the fibres of  \(\cdf{}{(Y_{0}/X_{0},\ms{M}_{0}/M_{0})}/\cat{H}_{k}\) defines a deformation functor \(\df{}{(Y_{0}/X_{0},\ms{M}_{0}/M_{0})}\co \cat{H}_{k}\ra \Sets\)\textup{.} 
There are correspondingly defined fibred categories and deformation functors \(\df{}{(X_{0},M_{0})}\)\textup{,} \(\df{}{Y_{0}/X_{0}}\) and \(\df{}{X_{0}}\)\textup{.} Moreover\textup{,} there is a commutative diagram of forgetful maps\textup{:} 
\begin{equation*}
\xymatrix@C+6pt@R-6pt@H-6pt{
\hspace{0em}\df{}{(Y_{0}/X_{0},\ms{M}_{0}/M_{0})} \ar[r]\ar@<-2.8em>[d] & \df{}{Y_{0}/X_{0}}\hspace{-1.7em}\ar[d]
\\
\hspace{-3em}\df{}{(X_{0},M_{0})} \ar[r] & \df{}{X_{0}}\hspace{-0.5em}
}
\end{equation*}
\end{cor}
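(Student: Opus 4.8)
The plan is to obtain the corollary as a formal consequence of Proposition \ref{prop.fib}; the only input that goes beyond pure category theory is the stability of the higher direct images under base change, and that is already built into that proposition. First I would make the three ``correspondingly defined'' categories explicit. Define $\cdf{}{Y_{0}/X_{0}}$ by copying Definition \ref{def.fib} and deleting every occurrence of the modules $\ms{M}_{0},\ms{M},M_{0},M$ and of the maps $\alpha_{0},\alpha$: an object over $S\in\cat{H}_{k}$ consists of a proper map $f\co Y\ra X$ with $Y$ an $S$-flat scheme, an identification $Y_{0}\cong Y\times_{X}X_{0}$ of the central fibre, an $S$-flat deformation $X$ of $X_{0}$, and $S$-flatness of $\mr{R}^{1}\hspace{-0.15em}f_{*}\Q_{Y}$; morphisms are the squares $(\ast\ast)$ of Definition \ref{def.fib} with the sheaves removed. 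The category $\cdf{}{(X_{0},M_{0})}$ is already in place, and $\cdf{}{X_{0}}$ is $\cdf{}{(X_{0},M_{0})}$ with $M$ forgotten, i.e.\ the usual fibred category of deformations of the singularity $X_{0}$. Rerunning the proof of Proposition \ref{prop.fib} with the module data deleted shows $\cdf{}{Y_{0}/X_{0}}\ra\cat{H}_{k}$ is a fibred category, and for $\cdf{}{(X_{0},M_{0})}$ and $\cdf{}{X_{0}}$ this is standard, using only the existence of henselian base change recorded after \eqref{eq.hsquare}.

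Next I would pass from fibred categories to functors. For any fibred category $\mc{F}\ra\cat{H}_{k}$ with essentially small fibres, sending $S$ to the set of isomorphism classes of $\mc{F}(S)$ together with the pullback maps -- well defined up to unique isomorphism, hence strictly functorial on isomorphism classes -- is a functor $\cat{H}_{k}\ra\Sets$, which is what we mean by the associated deformation functor (exactly as $\df{}{(X_{0},M_{0})}$ was obtained from $\cdf{}{(X_{0},M_{0})}$). Essential smallness is clear here, since in every case an object is pinned down by finitely many finite-type data over a henselian local $k$-algebra. Applied to the four fibred categories this yields $\df{}{(Y_{0}/X_{0},\ms{M}_{0}/M_{0})}$, $\df{}{Y_{0}/X_{0}}$, $\df{}{(X_{0},M_{0})}$ and $\df{}{X_{0}}$.

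Finally I would check that the four evident forgetful functors are morphisms of fibred categories over $\cat{H}_{k}$: namely $\cdf{}{(Y_{0}/X_{0},\ms{M}_{0}/M_{0})}\ra\cdf{}{Y_{0}/X_{0}}$ (drop the modules), $\cdf{}{(Y_{0}/X_{0},\ms{M}_{0}/M_{0})}\ra\cdf{}{(X_{0},M_{0})}$ (drop the partial resolution, retaining $(X,M)$, which lies in $\cdf{}{(X_{0},M_{0})}(S)$ by Definition \ref{def.fib}(ii)), and then $\cdf{}{Y_{0}/X_{0}}\ra\cdf{}{X_{0}}$ and $\cdf{}{(X_{0},M_{0})}\ra\cdf{}{X_{0}}$. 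Each commutes with the projections to $\cat{H}_{k}$ by inspection, and each sends cartesian arrows to cartesian arrows because in the construction of base change in the proof of Proposition \ref{prop.fib} the pieces $X$, $Y$, $M$, $\ms{M}$ are base changed separately, so forgetting any subset of them commutes with base change. Hence each forgetful functor induces a natural transformation of the associated $\Sets$-valued functors, and the square commutes on the nose, both composites down to $\df{}{X_{0}}$ being induced by ``remember only the deformation $X_{0}\ra X$ of the singularity''.

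I do not expect a genuine obstacle: the only substantive point -- preservation of the $S$-flatness of $\mr{R}^{1}\hspace{-0.15em}f_{*}\Q_{Y}$ and $\mr{R}^{1}\hspace{-0.15em}f_{*}\ms{M}$ under base change, via Proposition \ref{prop.extbc} -- is already inside Proposition \ref{prop.fib}, and what remains is bookkeeping. The one thing to be careful with is that forgetting data must land in the intended target category, e.g.\ that after erasing $M$ and $\ms{M}$ the map $f\co Y\ra X$ still satisfies the defining conditions of $\cdf{}{Y_{0}/X_{0}}$ and the remaining $X$ is still an object of $\cdf{}{X_{0}}(S)$; but every one of these conditions is a sub-condition of one already imposed in Definition \ref{def.fib}, so no new verification is needed.
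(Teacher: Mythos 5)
Your proposal is correct and matches the paper's (implicit) treatment: the corollary is stated without proof precisely because it is the formal bookkeeping you carry out — the "correspondingly defined" categories are Definition \ref{def.fib} with the appropriate data deleted, Proposition \ref{prop.fib} (and its module-free rerun) gives the fibred-category structure, and passing to isomorphism classes plus the evident cartesian-preserving forgetful functors yields the commutative square.
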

We write `local family' to indicate membership in any of the fibred categories.
\begin{defn}\label{def.smooth}
Suppose \(F\) and \(G\) are set-valued contravariant functors of \(\cat{H}_{k}\) with \(|F(\Spec k)|=|G(\Spec k)|=1\). Then a natural transformation \(\phi\co F\ra G\) is \emph{smooth} if for all closed embeddings \(S\ra R\) in \(\cat{H}_{k}\), the natural map \(F(R)\ra F(S)\times_{G(S)}G(R)\) is surjective.
\end{defn}
In particular \(\phi\) is surjective. With this definition versality of a pair \((R,\xi)\), \(\xi\in F(R)\) is the same as smoothness of the corresponding Yoneda map \(h_{R}\ra F\) and \(R\) algebraic over \(k\).
\section{Normality and McKay-Wunram correspondence of blowing-up}\label{sec.MW}
We prove normality of the blowing-up of a rational surface singularity in a reflexive module and a McKay-Wunram correspondence with such blowing-ups.
The following statement about deformations is a key ingredient in the proofs of the main results.
\begin{lem}\label{lem.MJ}
Suppose \(f_{0}\co Y_{0}\ra X_{0}\) is a partial resolution of a rational surface singularity and \(M_{0}\) a rank \(r\) reflexive \(\Q_{X_{0}}\)-module\textup{.} Assume that \(\ms{M}_{0}=f_{0}^{\tri}M_{0}\) is locally free on \(Y_{0}\)\textup{.} Let \((f\co Y\ra X,\ms{M}/M)\) be a deformation in \(\df{}{(Y_{0}/X_{0},\ms{M}_{0}/M_{0})}(S)\)\textup{.} 
Let \(0\ra\Q_{X}^{\oplus r-1}\ra M\ra J\ra 0\) be the short exact sequence of \(\Q_{X}\)-modules defined by a lifting of \(r-1\) elements in \(M_{0}\) with the property in \textup{Lemma \ref{lem.chern}}\textup{.} 
\begin{enumerate}[leftmargin=2.4em, label=\textup{(\roman*)}]
\item There are natural
isomorphisms of \(\Q_{X}\)-modules\textup{:} 
\begin{equation*}
\llb M\rrb \cong f_{*}\textstyle\bigwedge^{\hspace{-0.12em}r}\hspace{-0.2em}\ms{M}\cong J
\end{equation*}
\item Put \(\ms{F}_{0}=\bigwedge^{\hspace{-0.12em}r}\hspace{-0.2em}\ms{M}_{0}\)\textup{.} Then the map 
\begin{equation*}
\textstyle
\eta\co\df{}{(Y_{0}/X_{0},\hspace{0.1em}\ms{M}_{0}/M_{0})}
\lra
\df{}{(Y_{0}/X_{0},\hspace{0.18em}
\ms{F}_{0}/f_{0}{}_{*}\ms{F}_{0})}
\end{equation*}
given by \((f\co Y\ra X,\ms{M}/M)\mapsto (f\co Y\ra X,\bigwedge^{\hspace{-0.12em}r}\hspace{-0.3em}\ms{M}/f_{*}\bigwedge^{\hspace{-0.12em}r}\hspace{-0.3em}\ms{M})\) is well defined and
smooth\textup{.}
\end{enumerate}
\end{lem}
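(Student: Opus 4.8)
The plan is to prove (i) first, since (ii) will follow from (i) together with the smoothness machinery already in place. For (i), the key point is that everything commutes with base change. By Lemma \ref{lem.chern} applied to the central fibre, a suitable lift of $r-1$ global sections $s_2,\dots,s_r$ of $\ms M_0$ gives a short exact sequence $0\to\Q_{Y_0}^{\oplus r-1}\to\ms M_0\xra{w}\bigwedge^r\ms M_0\to0$ with $\bigwedge^r\ms M_0\cong\Q_{Y_0}(D)$ for an affine smooth divisor $D$ meeting $E(f_0)_{\tn{red}}$ transversally. First I would lift these sections to global sections of $\ms M$ over $Y$ (possible since $\ms M$ is $S$-flat with prescribed central fibre, using Lemma \ref{lem.fib} and the fact that $\ms M$ is globally generated), obtaining a complex $\Q_Y^{\oplus r-1}\to\ms M\xra{w}\bigwedge^r\ms M$; one checks it is short exact on the central fibre, hence short exact and $S$-flat by the usual Nakayama/flatness argument (the cokernel $\bigwedge^r\ms M$ is $S$-flat because it is locally free as $\ms M$ is). Pushing forward along $f$ and using $\mr R^1f_*\Q_Y^{\oplus r-1}=0$ (which holds by Example \ref{ex.fib} / Proposition \ref{prop.extbc}, since $\mr R^1(f_0)_*\Q_{Y_0}=0$) gives a short exact sequence $0\to\Q_X^{\oplus r-1}\to f_*\ms M\to f_*\bigwedge^r\ms M\to0$. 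Now identify $f_*\ms M\cong M$ by Lemma \ref{lem.fib}(i), and check that under this identification the lifted sections of $\ms M$ correspond to lifted sections of $M$, so the sequence is exactly $0\to\Q_X^{\oplus r-1}\to M\to J\to0$, giving $J\cong f_*\bigwedge^r\ms M$.

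For the remaining isomorphism $\llb M\rrb\cong f_*\bigwedge^r\ms M$, I would use the description in \eqref{eq.OZ}: $\llb M\rrb$ is the image of $\bigwedge^r M\to\bigwedge^r M\ot_{\Q_X}K(X)\cong K(X)$. On the dense open $U=D(t)$ from Lemma \ref{lem.U}, where $M$ is locally free and $f$ is an isomorphism, the map $w$ identifies $\bigwedge^r\ms M$ with an invertible subsheaf of $K(X)$, and since $D$ lies over the transversal locus one sees $f_*\bigwedge^r\ms M$ agrees with $\llb M\rrb$ after restriction to $U$; then both sides being reflexive (or at least torsion-free with the same $U$-restriction and $M\to f_*f^\tri M$ an isomorphism by Proposition \ref{prop.lCM}(i) in the central case, extended to families via base change) forces equality globally. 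The cleanest route is probably: $J$ is, by construction, the image of $M$ in $\Q_X(D)$-type sheaf, and comparing with the Plücker description \eqref{eq.OZ} directly shows $J\cong\llb M\rrb$; this is essentially the content of \cite[3.3]{vil:06} read in the family. The main obstacle here is bookkeeping: making sure the "lifting of sections" used to define $J$ in the hypothesis is compatible with the one used to split off $\Q_Y^{\oplus r-1}$ upstairs, and that $\bigwedge^r\ms M$ really is $S$-flat and commutes with $f_*$ along the family — all of which reduces to Proposition \ref{prop.extbc} and Corollary \ref{cor.blowup} but needs care.

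For (ii), the map $\eta$ is well defined once (i) gives that $(Y/X,\bigwedge^r\ms M/f_*\bigwedge^r\ms M)$ satisfies the conditions of Definition \ref{def.fib}: $f$ proper is inherited; $\mr R^1f_*\Q_Y$ is $S$-flat by hypothesis; $\mr R^1f_*\bigwedge^r\ms M$ is $S$-flat because $\bigwedge^r\ms M\cong\Q_Y(D)$ is globally generated on the central fibre (intersection number $\ge0$ by Proposition \ref{prop.pic}(ii), since $D$ meets the $E_i$ transversally or trivially), so $\mr R^1(f_0)_*\ms F_0=0$ and Proposition \ref{prop.extbc} propagates this; the adjunction isomorphism $f_*\bigwedge^r\ms M\cong(f_0)_*\ms F_0$ on the central fibre is immediate. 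For smoothness (Definition \ref{def.smooth}), given a closed embedding $S\hra R$ in $\cat H_k$ and a family over $S$ together with a lift of its $\eta$-image to $R$, I would construct a lift of the original family: the obstruction to lifting $(Y/X,\ms M/M)$ over $S\hra R$, compared with the obstruction to lifting $(Y/X,\bigwedge^r\ms M/\cdots)$, differ by the obstruction to lifting the chosen trivial summand $\Q_Y^{\oplus r-1}\hra\ms M$ and the sequence $\beta$ of Lemma \ref{lem.chern}; since $\Q_Y^{\oplus r-1}$ and the extension class of $\beta$ deform freely (the relevant $\Ext$ and $\mr H^1$ groups controlling the splitting vanish or are unobstructed, again by Proposition \ref{prop.extbc} and the affineness/transversality of $D$), one can always complete the lift. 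The hard part, and the real content of (ii), is this last comparison of obstruction theories — showing that passing from $\ms M$ to its top exterior power only throws away unobstructed directions — so I would isolate it as a lemma about the exact sequence $\beta$ being "smoothly liftable" and feed it into the standard smoothness criterion.
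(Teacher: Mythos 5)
Your treatment of (i) follows the paper's own route: lift the \(r-1\) sections, obtain the short exact sequence \(0\to\Q_Y^{\oplus r-1}\to\ms M\xra{w}\bigwedge^{\hspace{-0.12em}r}\hspace{-0.2em}\ms M\to0\) (flatness and invertibility of the cokernel checked on the central fibre via Lemma \ref{lem.chern}), push forward using \(\mr{R}^1\hspace{-0.15em}f_*\Q_Y=0\) and Lemma \ref{lem.fib} to get \(J\cong f_*\bigwedge^{\hspace{-0.12em}r}\hspace{-0.2em}\ms M\). One caution: your fallback claim that two torsion-free sheaves with the same restriction to a dense open must agree is false as stated (one can sit properly inside the other). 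What actually closes this step, and what the paper does, is that \(f_*w\) factors through the natural map \(e\co\bigwedge^{\hspace{-0.12em}r}\hspace{-0.1em}f_*\ms M\to f_*\bigwedge^{\hspace{-0.12em}r}\hspace{-0.2em}\ms M\), so \(e\) is surjective; it is generically injective by Lemma \ref{lem.U}, and since the target is torsion free \(e\) factors through the image \(\llb M\rrb\subset K(X)\) and induces the isomorphism. This is your ``cleaner route'' and is the one to keep.

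The genuine gap is the smoothness half of (ii), which you explicitly defer (``isolate it as a lemma'') and whose justification you mis-locate: no comparison of obstruction theories is needed, affineness or transversality of \(D\) plays no role here, and \(\Q_Y^{\oplus r-1}\) is not a summand of \(\ms M\) (the sequence is only locally split, so there is no ``splitting'' to control). The paper's argument is a single direct construction. Given a closed embedding \(S\to R\) and \((Y'/X',\ms L/f'_*\ms L)\) over \(R\) restricting to \((Y/X,\bigwedge^{\hspace{-0.12em}r}\hspace{-0.2em}\ms M/f_*\bigwedge^{\hspace{-0.12em}r}\hspace{-0.2em}\ms M)\), one lifts the extension \(\xi\co 0\to\Q_Y^{\oplus r-1}\to\ms M\to\bigwedge^{\hspace{-0.12em}r}\hspace{-0.2em}\ms M\to0\) to an extension \(\xi'\co 0\to\Q_{Y'}^{\oplus r-1}\to\ms E\to\ms L\to0\). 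This is possible because \(\xt{2}{Y_0}{\ms F_0}{\Q_{Y_0}^{\oplus r-1}}=0\) (the fibres of \(f_0\) are at most one-dimensional over the affine \(X_0\)), so Proposition \ref{prop.extbc} gives surjectivity of the base-change map \(\xt{1}{Y'}{\ms L}{\Q_{Y'}^{\oplus r-1}}\to\xt{1}{Y}{\bigwedge^{\hspace{-0.12em}r}\hspace{-0.2em}\ms M}{\Q_Y^{\oplus r-1}}\). Then \(\ms E\) is locally free and \(R\)-flat, \(f'_*\ms E\) is a deformation of \(M\) by Proposition \ref{prop.extbc} again, and \(\bigwedge^{\hspace{-0.12em}r}\hspace{-0.1em}\ms E\cong\ms L\) as in (i), so \((Y'/X',\ms E/f'_*\ms E)\) is the required lift of \((Y/X,\ms M/M)\). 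Without identifying this \(\Ext^2\) vanishing and the resulting \(\Ext^1\) base-change surjectivity, your plan for (ii) does not close.
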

\begin{proof}
(i) There is a natural map \(e\co\bigwedge^{\hspace{-0.12em}r} \hspace{-0.1em}f_{*}\ms{M}\ra f_{*}\bigwedge^{\hspace{-0.12em}r}\hspace{-0.2em}\ms{M}\). We prove that \(e\) is surjective. Let \(s_{2},\dots,s_{r}\) denote the given global sections in \(\ms{M}\) and let \(\ms{L}\) denote the cokernel of the induced map \(i\co \Q_{Y}^{\oplus r-1}\ra\ms{M}\).
The central fibre \(i_{0}\) of \(i\) is injective and \(\coker i_{0}\) is an invertible sheaf (Lemma \ref{lem.chern}). It follows that \(\ms{L}\) is \(S\)-flat and invertible. 
Since \(\ms{M}\ra\ms{L}\) is locally split, the \(\Q_{Y}\)-linear map \(w\co \ms{M}\ra\bigwedge^{\hspace{-0.12em}r}\hspace{-0.2em}\ms{M}\) defined by \(m\mapsto m\wedge s_{2}\wedge\dots\wedge s_{r}\) is surjective and the induced sequence
\begin{equation}\label{eq.chern}
\xi\co\quad 0\ra \Q_{Y}^{\oplus r-1}\xra{\hspace{0.4em}i\hspace{0.4em}} \ms{M}\xra{\,\,w\,\,}\textstyle\bigwedge^{\hspace{-0.12em}r}\hspace{-0.2em}\ms{M}\ra 0 
\end{equation}
is short exact. Push forward of \(\xi\) by \(f\) gives a short exact sequence 
\begin{equation}\label{eq.dws}
0\ra\Q_{X}^{\oplus r-1}\xra{\hspace{0.2em}f_{*}i\hspace{0.2em}} M\xra{\hspace{0.2em}f_{*}w\hspace{0.2em}} f_{*}\textstyle\bigwedge^{\hspace{-0.12em}r}\hspace{-0.2em}\ms{M}\ra 0
\end{equation}
by Lemma \ref{lem.fib} and Example \ref{ex.fib}.
Note that \(f_{*}w\) factors via \(e\) and \(e\) is thus surjective.
By Lemma \ref{lem.U}, \(e\) is generically injective. Since \(f_{*}\bigwedge^{\hspace{-0.12em}r}\hspace{-0.2em}\ms{M}\) is torsion free, \(e\) induces a map \(\bar{e}\co \llb M\rrb\ra f_{*}\bigwedge^{\hspace{-0.12em}r}\hspace{-0.2em}\ms{M}\cong J\) which is an isomorphism.

(ii) The sequences \eqref{eq.chern} and \eqref{eq.dws} implies that \((f\co Y\ra X,\bigwedge^{\hspace{-0.12em}r}\hspace{-0.3em}\ms{M}/f_{*}\bigwedge^{\hspace{-0.12em}r}\hspace{-0.3em}\ms{M})\) is a deformation. Since base change of \eqref{eq.chern} and \eqref{eq.dws} give sequences with the same properties, the map \(\eta\) is well defined. 
 
Suppose \(i\co S\ra R\) is a closed immersion in \(\cat{H}_{k}\) and \((f'\co Y'\ra X',\ms{L}/f'_{*}\ms{L})\) an element in \(\df{}{(Y_{0}/X_{0},\hspace{0.18em}\ms{F}_{0}/f_{0}{}_{*}\ms{F}_{0})}(R)\) which restricts to \((Y/X,\bigwedge^{\hspace{-0.12em}r}\hspace{-0.2em}\ms{M}/f_{*}\hspace{-0.1em}\bigwedge^{\hspace{-0.12em}r}\hspace{-0.2em}\ms{M})\).  
Since \(\xt{2}{Y_{0}}{\bigwedge^{\hspace{-0.12em}r}\hspace{-0.2em}\ms{M}_{0}}{\Q_{Y_{0}}^{\oplus r-1}}=0\), Corollary \ref{cor.extbc} implies that the base change map 
\begin{equation}
\xt{1}{Y'}{\ms{L}}{\Q_{Y'}^{\oplus r-1}}\lra \xt{1}{Y}{{\textstyle\bigwedge}^{\hspace{-0.12em}r}\hspace{-0.2em}\ms{M}}{\Q_{Y}^{\oplus r-1}}
\end{equation}
is surjective. In particular there is a short exact sequence 
\begin{equation}
\xi'\co\hspace{0.8em} 0\ra\Q_{Y'}^{\oplus r-1}\lra \ms{E}\lra\ms{L}\ra 0 
\end{equation}
of \(\Q_{Y'}\)-modules which pulls back to \(\xi\). Then \(\ms{E}\) is locally free and \(R\)-flat. Moreover, \(f'_{*}\ms{E}\) is a deformation of \(M\) by Corollary \ref{cor.extbc}.  Then \((Y'/X',\ms{E}/f'_{*}\ms{E})\) is a deformation of \((Y/X,\ms{M}/M)\) and as in (i) we have \(\bigwedge^{\hspace{-0.12em}r}\hspace{-0.1em}\ms{E}\cong\ms{L}\) above \(\textstyle\bigwedge^{\hspace{-0.12em}r}\hspace{-0.2em}\ms{M}\). 
\end{proof}
\begin{prop}[Normality]\label{prop.normal}
Let \(X\) be a surface with only rational singularities and \(\pi\co Y\ra X\) the blowing-up of \(X\) in a reflexive \(\Q_{X}\)-module \(M\)\textup{.} Then \(Y\) is normal\textup{.} 
\end{prop}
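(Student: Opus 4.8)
The plan is to reduce to a single rational surface singularity, and there to identify $Y$ with the blowing‑up of a fractional ideal whose powers can be computed on the minimal resolution.

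First I would pass to the local case. Normality is local on $Y$ and on $X$ and is unaffected by henselisation, and $M$, being reflexive, is free over the two‑dimensional regular local rings at the smooth points of $X$, so $\pi$ is an isomorphism outside finitely many points; hence one may assume $X=\Spec A$ with $A$ a henselian local rational surface singularity, the regular case being trivial. Let $\pi_{0}\co\tilde X\ra X$ be the minimal resolution, $\ms{M}=\pi_{0}^{\tri}M$, and $r=\rk M$. By Proposition \ref{prop.lCM}(i) the sheaf $\ms{M}$ is locally free, generated by global sections, and $M\cong(\pi_{0})_{*}\ms{M}$, so $(\pi_{0},\ms{M})$ defines an object of $\df{}{(\tilde X/X,\ms{M}/M)}(\Spec k)$. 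Lemma \ref{lem.chern} applied to $\ms{M}$ gives an effective, affine, smooth divisor $D$ on $\tilde X$ meeting $E(\pi_{0})_{\tn{red}}$ transversally with $\textstyle\bigwedge^{\hspace{-0.12em}r}\hspace{-0.2em}\ms{M}\cong\Q_{\tilde X}(D)$, while Lemma \ref{lem.MJ}(i) applied to the trivial deformation $(\pi_{0},\ms{M})$ gives $\llb M\rrb\cong(\pi_{0})_{*}\bigwedge^{\hspace{-0.12em}r}\hspace{-0.2em}\ms{M}$. Writing $J=(\pi_{0})_{*}\Q_{\tilde X}(D)\sbeq K(X)$ for the resulting fractional ideal of $A$, so $J\cong\llb M\rrb$, and recalling that blowing‑up a fractional ideal depends only on its isomorphism class, \eqref{eq.OZ} identifies $Y=\Bl_{M}(X)$ with $\Proj_{X}\bigl(\bigoplus_{n\geq 0}J^{\,n}\bigr)$, where $J^{\,n}=\im\bigl(J^{\otimes n}\ra K(X)\bigr)$ is the product of $n$ copies of $J$ in $K(X)$; since $D$ is effective, $A\sbeq J$, so $J^{\,n-1}\sbeq J^{\,n}\sbeq(\pi_{0})_{*}\Q_{\tilde X}(nD)$.

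The heart of the matter, and the main obstacle, is the identity $J^{\,n}=(\pi_{0})_{*}\Q_{\tilde X}(nD)$ for all $n$, i.e.\ that this section ring is generated in degree one; this is where rationality of $X$ is used essentially. I would prove it by induction on $n$. As $\bigwedge^{\hspace{-0.12em}r}\hspace{-0.2em}\ms{M}$ is globally generated, $\Q_{\tilde X}(nD).E_{i}=n\,\ch_{1}(\ms{M}).E_{i}\geq 0$ for each exceptional curve $E_{i}$, so $\mr{R}^{1}(\pi_{0})_{*}\Q_{\tilde X}(nD)=0$ by Proposition \ref{prop.pic}(ii)(b). Pushing forward $0\ra\Q_{\tilde X}((n-1)D)\ra\Q_{\tilde X}(nD)\ra(\Q_{\tilde X}(nD))_{|D}\ra 0$ (multiplication by the tautological section of $\Q_{\tilde X}(D)$ vanishing on $D$) and using this vanishing yields surjections $(\pi_{0})_{*}\Q_{\tilde X}(kD)\thr(\pi_{0})_{*}(\Q_{\tilde X}(kD))_{|D}$ for $k\leq n$. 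Because $D$ is affine and smooth, global sections of line bundles on $D$ are the corresponding modules and multiplication of sections is surjective onto sections of the tensor product; combining this with the surjections for $k=1$ and $k=n-1$ and with the inductive hypothesis $J^{\,n-1}=(\pi_{0})_{*}\Q_{\tilde X}((n-1)D)$ shows that $J\cdot J^{\,n-1}$ surjects onto $(\pi_{0})_{*}(\Q_{\tilde X}(nD))_{|D}$; since also $J\cdot J^{\,n-1}\speq J^{\,n-1}=(\pi_{0})_{*}\Q_{\tilde X}((n-1)D)$, it equals all of $(\pi_{0})_{*}\Q_{\tilde X}(nD)$. The bookkeeping (compatibility of restriction to $D$ with multiplication of rational functions) is routine; the real content is the cohomology vanishing, without which a globally generated line bundle on $\tilde X$ need not have its section ring generated in degree one.

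It remains to read off normality. Each $J^{\,n}=(\pi_{0})_{*}\Q_{\tilde X}(nD)=\bigcap_{\Gamma}\{g\in K(X):\mr{ord}_{\Gamma}g\geq -n\,\mr{ord}_{\Gamma}D\}$, the intersection over prime divisors $\Gamma$ of $\tilde X$, is an intersection of valuation ideals, hence integrally closed in $K(X)$. Therefore the Noetherian Rees algebra $\bigoplus_{n\geq 0}J^{\,n}=A[J\,t]$, which is generated in degree one with all $J^{\,n}$ integrally closed, coincides with its integral closure in $K(X)(t)$ (whose $n$‑th graded part is the integral closure of $J^{\,n}$), so it is a normal domain; and $\Proj_{X}$ of a normal graded domain is normal, being covered by spectra of degree‑zero parts of its homogeneous localisations. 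Hence $Y$ is normal. (One may in fact identify $Y$ with the partial resolution of $X$ obtained from $\tilde X$ by contracting exactly the curves $E_{i}$ with $\ch_{1}(\ms{M}).E_{i}=0$, cf.\ Proposition \ref{prop.modific}(iii); this gives normality directly and is the blowing‑up form of the McKay--Wunram correspondence, Theorem \ref{thm.MW}.)
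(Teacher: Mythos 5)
Your proof is correct, and its skeleton is the same as the paper's: reduce to a single rational surface singularity, pass to the minimal resolution $\pi\co\tilde{X}\ra X$ where $\ms{M}=\pi^{\tri}M$ is locally free (Proposition \ref{prop.lCM}), identify $\llb M\rrb\cong\pi_{*}\bigwedge^{r}\hspace{-0.2em}\ms{M}\cong\pi_{*}\Q_{\tilde{X}}(D)$ via Lemma \ref{lem.MJ} (together with Lemma \ref{lem.chern}), and deduce normality of $Y\cong\Bl_{\llb M\rrb}(X)$ from integral-closure properties of this fractional ideal on a rational singularity. The difference is in how the last step is handled: the paper finishes in two lines by citing Lipman, namely \cite[5.3]{lip:69} for the completeness (integral closedness) of the pushforward of an invertible sheaf, and \cite[8.1]{lip:69} for normality of the blowing-up of a complete ideal on a rational surface singularity. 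You re-derive these inputs instead: integral closedness of each $J^{n}$ from the valuative description of $\pi_{*}\Q_{\tilde{X}}(nD)$, and, as the main step, degree-one generation $J^{n}=\pi_{*}\Q_{\tilde{X}}(nD)$ by induction using $\mr{R}^{1}\pi_{*}\Q_{\tilde{X}}(kD)=0$ from Proposition \ref{prop.pic} together with surjective multiplication of sections on the affine divisor $D$ -- which is essentially the vanishing-theorem mechanism behind Lipman's theorem that products of complete ideals are complete on rational singularities; normality of the Rees algebra $A[Jt]$ and of its $\Proj$ then follow from standard facts (gradedness of the normalization, applied after clearing denominators to replace the fractional ideal $J$ by an honest ideal, and integral closedness of the degree-zero parts of homogeneous localizations). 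Your route buys a self-contained argument that makes explicit exactly where rationality enters (the case $k=0$, i.e. $\mr{R}^{1}\pi_{*}\Q_{\tilde{X}}=0$, needed for the surjection $\pi_{*}\Q_{\tilde{X}}(D)\ra\pi_{*}(\Q_{\tilde{X}}(D)\ot\Q_{D})$); the paper's citations buy brevity and a cleaner separation of concerns, since Lipman's results apply to arbitrary complete ideals and not only to those of the form $\pi_{*}\Q_{\tilde{X}}(D)$.
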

\begin{proof}
We may assume \(X\) is a rational surface singularity.  
The strict transform \(\ms{F}=\pi^{\tri}M\) along the minimal resolution \(\pi\co \tilde{X}\ra X\) is locally free by Proposition \ref{prop.lCM}. By Lemma \ref{lem.MJ}, \(\llb M\rrb\cong \pi_{*}(\bigwedge^{\hspace{-0.12em}r}\hspace{-0.2em}\ms{F})
\). Since \(\bigwedge^{\hspace{-0.12em}r}\hspace{-0.2em}\ms{F}\) is an invertible sheaf, \(\llb M\rrb\) is an integrally closed fractional ideal by \cite[5.3]{lip:69}. By \cite[8.1]{lip:69} the blowing-up \(\Bl_{\llb M\rrb}(X)\cong Y\) is normal.
\end{proof}
The following class of reflexive modules was introduced in \cite{wun:88}.
Let \(\pi\co \tilde{X}\ra X\) be the minimal resolution of a rational surface singularity, \(M\) a (non-trivial) reflexive \(\Q_{X}\)-module and \(\ms{F}=\pi^{\tri}M\) the strict transform. Put \(\ms{F}^{\omega}=\shm{}{\tilde{X}}{\ms{F}}{\omega_{\tilde{X}}}\) and \(\ms{F}^{\vee}=\shm{}{\tilde{X}}{\ms{F}}{\Q_{\tilde{X}}}\). While in general \(\mr{R}^{1}\pi_{*}\ms{F}^{\omega}=0\) (Proposition \ref{prop.lCM}), we say that \(M\) is \emph{Wunram} if the stronger condition \(\mr{R}^{1}\pi_{*}\ms{F}^{\vee}=0\) holds. Note that for RDPs all reflexive are Wunram since $\omega_{\tilde{X}}\cong \Q_{\tilde{X}}$. Wunram constructed the indecomposabel non-projective Wunram modules as follows. Let $D_i$ be an effective prime divisor transversal to the prime component $E_i$ in the fundamental cycle $E(\pi)$ as in Proposition \ref{prop.pic} (iii). Choose a minimal number of $r_i$ generating global sections in $\Q_{D_i}$. Let $\ms{G}$ be the kernel of the induced map $\Q_{\tilde{X}}^{\oplus r_i}\ra \Q_{D_i}$. Then $\ms{G}$ is locally free of rank $r_i$. Put $\ms{F}_i=\ms{G}^{\vee}$ and $M_i=\pi_*\ms{F}_i$. One obtains sequences $\alpha$ and $\beta$ as in Lemma \ref{lem.chern}. Applying $\hm{}{\tilde{X}}{-}{\Q_{\tilde{X}}}$ to $\alpha$ gives a short exact sequence on $X$ by choice and $\mr{R}^{1}\pi_{*}\ms{F}_i^{\vee}=0$. Then $M_i$ is reflexive and $\pi^{\tri}M_i\cong \ms{F}_i$; cf. Proposition \ref{prop.lCM}. Moreover, $r_i=\dim_k\cH^0(\Q_{D_i})=\mr{c}_{1}(\ms{F}_i).E(\pi)$ which equals the multiplicity of $E_i$ in the fundamental cycle $E(\pi)$. This is Wunram's direct generalisation of the (geometric) McKay correspondence (cf.\ \cite{gon-spr/ver:83}, \cite[1.11]{art/ver:85}); see \cite[1.2]{wun:88} which also contains a `multiplication formula'. Note that Wunram's result is stated in the analytic category, but his proof of \cite[1.2]{wun:88} holds in all characteristics (with henselian local rings). Iyama and Wemyss generalised Wunram modules to all normal surface singularities with several characterisations in \cite[2.6-7]{iya/wem:10}. Van den Bergh gave a higher dimensional generalisation (of the sheaves) in \cite[3.5.1-4]{vdber:04}.

We prove a blowing-up version of the McKay-Wunram correspondence.
\begin{thm}\label{thm.MW}
Let \(\pi\co\tilde{X}\ra X\) be the minimal resolution of a rational surface singularity\textup{.} 
\begin{enumerate}[leftmargin=2.4em, label=\textup{(\roman*)}]
\item Blowing \(X\) up in a reflexive \(\Q_{X}\)-module \(M\) gives a partial resolution \(f\co Y=\Bl_{M}(X)\ra X\) dominated by the minimal resolution\textup{.} The partial resolution is obtained by contracting the prime components \(\{E_{i}\,\vert\,\ch_{1}(\pi^{\tri}M).E_{i}=0\}\) of the exceptional divisor \(E(\pi)\) in \(\tilde{X}\)\textup{.} 
\item Every partial resolution of \(X\) dominated by the minimal resolution is given by blowing up \(X\) in a Wunram \(\Q_{X}\)-module \(M\) and two Wunram modules give isomorphic partial resolutions if and only if they have the same non-free indecomposable summands\textup{.} 
\item The association \(M'\mapsto \ch_{1}(f^{\tri}M')\) gives a one-to-one correspondence between stable isomorphism classes of Wunram modules with the same non-free indecomposable summands as \(M\)\textup{,} and isomorphism classes of ample invertible sheaves on \(Y\)\textup{.}
\item If \(M=M_{i}\) is an indecomposable Wunram module then \(E(f)_{\tn{red}}\cong \BB{P}^{1}\) is the image of \(E_{i}\) under the contraction map \(\tilde{X}\ra Y\)\textup{.} The rank of \(M_{i}\) equals the \textup{(}generic\textup{)} multiplicity of the unreduced exceptional fiber \(E(f)\) in Y\textup{.} 
\end{enumerate}
\end{thm}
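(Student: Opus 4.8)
The plan is to extract all four statements from Lemma \ref{lem.MJ}, the McKay--Wunram correspondence \cite[1.2]{wun:88}, and the structural Propositions \ref{prop.modific}, \ref{prop.pic} and \ref{prop.lCM}; part (i) carries the substance and (ii)--(iv) are bookkeeping on top of it.

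For (i) we may assume $X$ is a rational surface singularity. Put $\ms{F}=\pi^{\tri}M$, locally free by Proposition \ref{prop.lCM}(i), and $\mathcal{L}=\ch_{1}(\ms{F})$. By Lemma \ref{lem.chern}, $\mathcal{L}\cong\Q_{\tilde X}(D)$ for an effective divisor meeting $E(\pi)_{\tn{red}}$ transversally, so $\mathcal{L}$ is generated by its global sections with $\mathcal{L}.E_{i}=D.E_{i}\geq 0$, and $\mathcal{L}.E_{i}=0$ exactly when $D\cap E_{i}=\emptyset$. Applying Lemma \ref{lem.MJ}(i) to the trivial deformation with $f_{0}=\pi$ gives (up to scaling) $\llb M\rrb=\pi_{*}\mathcal{L}\sbeq K(X)$, whence $\llb M\rrb\Q_{\tilde X}=\mathcal{L}$ because $\mathcal{L}$ is generated by global sections; moreover $\Bl_{M}(X)=\Bl_{\llb M\rrb}(X)$ by \eqref{eq.rk1}--\eqref{eq.OZ}. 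This blowing-up is a partial resolution: it is projective birational, it is normal by Proposition \ref{prop.normal} (Lipman \cite[8.1]{lip:69}), and like every normal partial resolution it is dominated by $\tilde X$ (resolve it and invoke minimality, Proposition \ref{prop.modific}(i)). So the domination $q\co\tilde X\ra\Bl_{M}(X)$ contracts a subset $\{E_{i}\mid i\notin J'\}$, and it remains to see $J'=\{i\mid\mathcal{L}.E_{i}\neq 0\}$. On $\Bl_{M}(X)$ the sheaf $\mathcal{A}=\llb M\rrb\Q$ is invertible with $\mathcal{A}^{-1}$ relatively ample (the tautological sheaf of $\Proj$ of the Rees algebra); pulling back, $q^{*}\mathcal{A}=\mathcal{L}$, so $\mathcal{L}$ has degree $0$ on every curve $E_{i}$ contracted by $q$, giving $J'\supseteq\{i\mid\mathcal{L}.E_{i}\neq0\}$. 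Conversely, if $\mathcal{L}.E_{i}=0$ but $q(E_{i})$ were a curve $C'$ (which is then contracted by $\Bl_{M}(X)\ra X$), we would get $0=\mathcal{L}.E_{i}=(\deg q|_{E_{i}})\cdot(\mathcal{A}.C')$ forcing $\mathcal{A}.C'=0$, contradicting relative ampleness of $\mathcal{A}^{-1}$. Hence $\Bl_{M}(X)$ is the contraction of $\{E_{i}\mid\ch_{1}(\ms{F}).E_{i}=0\}$.

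For (ii) and (iii), recall from McKay--Wunram that the non-free indecomposable Wunram modules are the $\{M_{i}\}_{i\in I}$ with $\ch_{1}(\pi^{\tri}M_{i}).E_{j}=\delta_{ij}$ and $\rk M_{i}=\mult_{E_{i}}E(\pi)$; a direct summand of a Wunram module is Wunram ($\mr{R}^{1}\pi_{*}$ of a dual commutes with $\op$), so any Wunram $M$ is $\bigoplus_{i}M_{i}^{a_{i}}\op\Q^{b}$ and then $\ch_{1}(\pi^{\tri}M).E_{j}=a_{j}$. Thus the set $J=\{i\mid\ch_{1}(\pi^{\tri}M).E_{i}\neq 0\}$ of (i) is exactly the index set of the non-free indecomposable summands of $M$. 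For (ii): a partial resolution $f\co Y\ra X$ dominated by $\tilde X$ comes with a unique $h\co\tilde X\ra Y$ (determined over the smooth locus); every curve $h$ contracts is contracted by $\pi$, hence is some $E_{j}$, so $h$ contracts $\{E_{j}\mid j\notin J\}$ for some $J$ and $Y\cong\Bl_{\bigoplus_{i\in J}M_{i}}(X)$ by (i) and Proposition \ref{prop.modific}(iii); distinct $J$ give contractions non-isomorphic over $X$ since the (unique) domination map recovers $J$. For (iii): fix $Y=\Bl_{M}(X)$ with exceptional curves $\bar E_{j}=h(E_{j})$, $j\in J$, and take $M'$ Wunram with the same non-free summand indices. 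The strict transform $f^{\tri}M'$ is locally free: reduce to the $M_{i}$ ($i\in J$), where $\Bl_{M_{i}}(X)$ is dominated by $Y$ and carries a locally free strict transform of $M_{i}$ by Proposition \ref{prop.blowup}, and pull back along $Y\ra\Bl_{M_{i}}(X)$ (the strict transform of a locally free sheaf along a birational map is its pullback). The intersection numbers $\ch_{1}(f^{\tri}M').\bar E_{j}$ equal $\ch_{1}(\pi^{\tri}M').E_{j}=a_{j}\geq 1$, so $\ch_{1}(f^{\tri}M')$ is ample by Proposition \ref{prop.pic}(ii)(c); conversely by Proposition \ref{prop.pic}(iii) the ample invertible sheaves on $Y$ are precisely $\bigotimes_{j\in J}\Q(D_{j})^{c_{j}}$ with all $c_{j}\geq 1$, and $M'\mapsto\ch_{1}(f^{\tri}M')$ realises the bijection $(a_{j})\leftrightarrow(c_{j})$ (adding free summands changes neither side).

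For (iv), $M=M_{i}$ gives $J=\{i\}$, so $Y=\Bl_{M_{i}}(X)$ is the contraction of every $E_{j}$ with $j\neq i$; hence $E(f)_{\tn{red}}=h(E_{i})$, and $h|_{E_{i}}\co E_{i}\ra h(E_{i})$ is finite and birational onto the normal curve $E(f)_{\tn{red}}\cong\BB{P}^{1}$ (Proposition \ref{prop.pic}(i) applied to $f$), hence an isomorphism, so $E(f)_{\tn{red}}\cong\BB{P}^{1}$ is the image of $E_{i}$. For the multiplicity: $E(f)=V(\fr{m}\Q_{Y})$ has no embedded components (Proposition \ref{prop.pic}(i)), so $E(f)$ is a multiple of $E(f)_{\tn{red}}$; evaluating at the generic point $\eta$ of $E(f)_{\tn{red}}$, where $h$ is an isomorphism onto a neighbourhood, and using $\fr{m}\Q_{\tilde X}=\Q_{\tilde X}(-E(\pi))$, the generic multiplicity equals the length of $\Q_{\tilde X,\eta'}/\fr{m}\Q_{\tilde X,\eta'}$, namely $\mult_{E_{i}}E(\pi)=\rk M_{i}$.

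\textbf{Expected obstacle.} The delicate step is inside part (i): realising $\Bl_{M}(X)=\Bl_{\llb M\rrb}(X)$ as an honest partial resolution and pinning down exactly which exceptional curves it collapses, which forces one to pass through Lipman's theory of complete ideals on rational surface singularities \cite{lip:69} (for the normality in Proposition \ref{prop.normal}) and then run a careful intersection-theoretic comparison using relative ampleness of the tautological sheaf of the Rees $\Proj$; the sign bookkeeping for $\llb M\rrb$ as a fractional ideal needs attention. A secondary technical point is the local freeness of the strict transforms appearing in (iii) over the still-singular intermediate models, which we finesse by reducing to the blow-ups $\Bl_{M_{i}}(X)$.
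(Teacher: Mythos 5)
The one step that fails as written is the domination of \(Y=\Bl_{M}(X)\) by the minimal resolution. You justify it by the principle that \emph{every} normal partial resolution is dominated by \(\tilde{X}\), ``resolve it and invoke minimality'': but minimality (Proposition \ref{prop.modific}(i)) produces a map \emph{into} \(\tilde{X}\) from a resolution, not a map from \(\tilde{X}\) onto \(Y\), and the general principle is false --- blowing up a closed point of \(\tilde{X}\) gives a normal partial resolution of \(X\) that is not dominated by \(\tilde{X}\). For the blow-up in \(M\) the domination is true, but it needs the argument the paper gives: \(\pi^{\tri}M\) is locally free by Proposition \ref{prop.lCM}(i), so the universal property of Proposition \ref{prop.blowup} applied to \(f'=\pi\) yields a unique \(X\)-map \(q\co\tilde{X}\ra\Bl_{M}(X)\); equivalently, inside your own setup, \(\llb M\rrb\Q_{\tilde{X}}=\ch_{1}(\pi^{\tri}M)\) is invertible, so \(\pi\) factors through \(\Proj\) of the Rees algebra of \(\llb M\rrb\). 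With that repaired your identification of the contracted curves goes through; note only that it is \(\ms{A}=\llb M\rrb\Q_{Y}\), the \(\Q(1)\) of the Rees \(\Proj\), that is \(f\)-ample (not \(\ms{A}^{-1}\)) --- your contradiction only uses \(\ms{A}.C'\neq 0\), so this sign slip is harmless, but it should be fixed.

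Apart from this, your route for (i) is genuinely different from the paper's. The paper never passes through relative ampleness of the Rees \(\Q(1)\): it contracts one curve \(\bar{E}_{j}\) at a time, base-changes to the henselisation at the image point, and uses Lemma \ref{lem.chern}, Corollary \ref{cor.blowup} and Proposition \ref{prop.pic}(ii)(a) to show that the pushed-down strict transform stays locally free exactly when \(\ch_{1}\) has degree zero on \(\bar{E}_{j}\); combined with universality of the blowing-up this pins down which curves \(\Bl_{M}(X)\) contracts. Your argument instead identifies \(\llb M\rrb\) via Lemma \ref{lem.MJ}(i) (as in Proposition \ref{prop.normal}) and runs an intersection-theoretic comparison on \(\tilde{X}\); both are fine, yours being perhaps more classical, the paper's avoiding any ampleness bookkeeping. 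Your treatment of (ii)--(iv) matches the paper's level of detail (the paper simply cites (i), Wunram's \cite[1.2]{wun:88} and Proposition \ref{prop.pic}); in (iv) the paper computes the generic multiplicity as \(\ch_{1}(\pi^{\tri}M_{i}).E(\pi)\) and quotes Wunram, while you localise at the generic point of \(E(f)_{\tn{red}}\) and use \(\fr{m}\Q_{\tilde{X}}=\Q_{\tilde{X}}(-E(\pi))\) (Artin's theorem on the fundamental cycle, which deserves an explicit citation), which is also correct.
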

\begin{proof}
(i) Proposition \ref{prop.normal} gives normality of \(Y\). By Proposition \ref{prop.lCM} and the universality of the blowing-up in Proposition \ref{prop.blowup}, \(\pi\) factors through \(f\).
Suppose \(g\co Y'\ra X\) is a partial resolution dominated by the minimal resolution such that \(\ms{M}=g^{\tri}M\) is locally free. Let \(\bar{E}_{j}\subset Y'\) denote the image of some exceptional component \(E_{j}\subset \tilde{X}\). Let \(h\co Y'\ra Y^{\dprime}\) be the contraction of \(\bar{E}_{j}\) with \(y=h(\bar{E}_{j})\). Put \(Y^{\dprime}_{y}=\Spec\Q_{Y^{\dprime}\hspace{-0.2em},\hspace{0.1em}y}^{\tn{h}}\) and let \(p\co V\ra Y^{\dprime}_{y}\) be the base change of \(h\) along the natural \(l_{y}\co Y^{\dprime}_{y}\ra Y^{\dprime}\). Then \(p\) is a resolution of a rational surface singularity. Let \(q\co V\ra Y'\) denote the projection and \(g'\co Y^{\dprime}\ra X\) the natural map with \(g=g'h\). Then \((g')^{\tri}M\cong h_{*}\ms{M}\) is locally free \(\lRa\) \(p^{\tri}l_{y}^{*}(g')^{\tri}M\cong\Q_{V}^{\oplus \rk M}\) by Lemma \ref{lem.chern}. Since \(h^{\tri}h_{*}\ms{M}\cong\ms{M}\) by Proposition \ref{prop.lCM}, Corollary \ref{cor.blowup} gives the isomorphism \(q^{*}\ms{M}\cong p^{\tri}l_{y}^{*}h_{*}\ms{M}\). It follows by Lemma \ref{lem.chern} and Proposition \ref{prop.pic} (iia) that \(h_{*}\ms{M}\) is locally free \(\lRa\) \(\ch_{1}(q^{*}\hspace{-0.2em}\ms{M}).q^{-1}(\bar{E}_{j})=0\). Finally \(\ch_{1}(q^{*}\hspace{-0.2em}\ms{M}).q^{-1}(\bar{E}_{j})=\ch_{1}(\ms{M}).\bar{E}_{j}\).

(ii-iii) are direct consequences of (i), Wunram's \cite[1.2]{wun:88} and Proposition \ref{prop.pic}.

(iv) See Proposition \ref{prop.pic}. The generic multiplicity of \(E(f)\) equals
\(\ch_{1}(\pi^{\tri}M_{i}).E(\pi)\) which by Wunram's \cite[1.2]{wun:88} equals \(\rk M_{i}\).
\end{proof}
Since all reflexive modules are Wunram if \(X\) is an RDP we retain Curto and Morrisons theorem \cite[2.2]{cur/mor:13}, however with the strengthening that the blowing-up of \(X\) in a reflexive module is normal. For a very different construction of minimal (and partial) resolutions of rational singularities employing the Wunram modules, see \cite[5.4.2]{kar:17}.
\begin{ex}\label{ex.omega}
Let \(f\co \tilde{X}^{\tn{c}}\ra X\) be the partial resolution obtained by contracting the \((-2)\)-curves in \(\tilde{X}\). Then \(f\) is called the \emph{RDP-resolution} of \(X\). In particular, \(\tilde{X}^{\tn{c}}\) has only RDP-singularities and is the canonical model of \(X\). By rationality \(\pi_{*}\omega_{\tilde{X}}\cong \omega_{X}\); \cite[4.12]{bad:01}. By Proposition \ref{prop.lCM}, \(\pi^{\tri}\omega_{X}\cong\omega_{\tilde{X}}\). For any \(E_{i}\), adjunction gives \(\omega_{\tilde{X}}.E_{i} = -2-E_{i}^{2}\), hence Theorem \ref{thm.MW} implies that the RDP-resolution is given by blowing up \(X\) in \(\omega_{X}\). If \(i\co U\ra X\) denotes the regular locus, let \(\omega_{X}^{n}\) denote the image of the natural map \(\omega_{X}^{\otimes n}\ra i_{*}(\omega_{U}^{\ot n})\).  Then \(\tilde{X}^{\tn{c}}\cong\Bl_{\omega_{X}}(X)\cong\Proj(\bigoplus_{n\geq 0}\omega_{X}^{n})\) which is the scheme-theoretic closed image of \(U\) in \(\BB{P}(\omega_{X})\); cf. \eqref{eq.rk1}.
\end{ex}
\section{The main theorem}\label{sec.square} 
\begin{thm}\label{thm.square}
Let \(f\co Y\ra X\) be the blowing-up of a rational surface singularity in a reflexive \(\Q_{X}\)-module \(M\)\textup{.} Let \(\ms{M}\) denote the strict transform  \(f^{\tri}\hspace{-0.09em}M\)\textup{.} The forgetful maps give a commutative diagram of deformation functors
\begin{equation*}\label{eq.mainsq1}
\xymatrix@C+12pt@R-6pt@H-6pt{
\hspace{-0.9em}\df{}{(Y/X,\ms{M}/M)} \ar[r]^(0.6){\beta}\ar@<-2.5em>[d]_(0.45){\alpha} & \df{}{Y/X}\hspace{-1.6em}\ar@<0.12em>[d]
\\
\hspace{-3em}\df{}{(X,M)} \ar[r] & \df{}{X}\hspace{-0.5em}
}
\end{equation*}
with the following properties\textup{:}
\begin{enumerate}[leftmargin=2.4em, label=\textup{(\roman*)}]
\item \(\alpha\) is injective
\item \(\beta\) is smooth\textup{,} in particular surjective
\item \(\beta\) is an isomorphism if \(\ms{M}\) is rigid\textup{.} In particular\textup{,} \(\beta\) is an isomorphism if \(M\) is a Wunram module or if \(\rk M=1\)\textup{.}
\end{enumerate}
\end{thm}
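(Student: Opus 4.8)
The plan is to establish the three claims largely by transporting everything through the functor \(\eta\) of Lemma \ref{lem.MJ}, which reduces the study of \(\df{}{(Y/X,\ms{M}/M)}\) to the study of deformations of the invertible sheaf \(\ms{F}_{0}=\bigwedge^{r}\ms{M}_{0}\) together with its pushforward. First I would set up the commutative diagram, using Lemma \ref{lem.fib} to identify \(\df{}{(Y/X,\ms{M}/M)}(S)\) with the data of a deformation \((f\co Y\ra X)\) of \(f_{0}\) together with a deformation \(\ms{M}\) of \(\ms{M}_{0}\) with \(\bar\alpha\co f^{\tri}M\cong\ms{M}\) automatic; this makes \(\alpha\) literally the map remembering \((X,M)\) and forgetting \((Y,\ms{M})\), while \(\beta\) remembers \(f\co Y\ra X\) and forgets \(\ms{M}\). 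For rank-one \(M\), the blowing-up is just \(\Bl_{\llb M\rrb}(X)\) (see \eqref{eq.OZ}), \(\ms{M}=\bigwedge^{1}\ms{M}\) is invertible, and \(\ms{M}\) is rigid on the fibres since \(\xt{1}{Y_{0}}{\ms{M}_{0}}{\ms{M}_{0}}\cong\cH^{1}(Y_{0},\Q_{Y_{0}})=0\) by rationality; this base case feeds into the general Wunram case via \(\eta\).

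For (i), injectivity of \(\alpha\): given two deformations \((Y,\ms{M})\) and \((Y',\ms{M}')\) over \(S\) mapping to the same \((X,M)\in\df{}{(X,M)}(S)\), I would recover both \(Y\) and \(Y'\) from \((X,M)\) intrinsically. By Lemma \ref{lem.MJ}(i), for a lifting of the \(r-1\) generic sections we get \(\llb M\rrb\cong f_{*}\bigwedge^{r}\ms{M}\cong J\), and this fractional ideal \(\llb M\rrb\) depends only on \((X,M)\), not on the chosen deformation of \(Y\). Since \(Y=\Bl_{\ms{M}}(X)\) central fibre is \(\Bl_{\llb M\rrb}\) (as in the proof of Proposition \ref{prop.normal}, via \eqref{eq.rk1}), and the blowing-up of the fractional ideal commutes with the base changes in question by Corollary \ref{cor.blowup}, one gets \(Y\cong\Bl_{\llb M\rrb}(X)\cong Y'\) compatibly, and then \(\ms{M}\cong f^{\tri}M\cong\ms{M}'\) by Lemma \ref{lem.fib}(iii). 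The subtle point here — and I expect this to be the main obstacle — is checking that the isomorphism \(\llb M\rrb\cong J\) is genuinely canonical enough that the blow-up it defines is independent of all auxiliary choices (the section lifts, the presentation), and that the resulting scheme is \emph{flat} over \(S\) with the right central fibre, so that it actually defines the \emph{same} element of \(\df{}{Y/X}(S)\); this is exactly where the cohomology-and-base-change input of Proposition \ref{prop.extbc}, applied as in Lemma \ref{lem.MJ}, does the work of making \(\llb M\rrb\) compute the flat blow-up.

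For (ii), smoothness of \(\beta\): given a small extension \(S\hookrightarrow R\) in \(\cat{H}_{k}\), an element of \(\df{}{Y/X}(R)\) lifting a given \((Y,\ms{M})\in\df{}{(Y/X,\ms{M}/M)}(S)\) amounts to a deformation \(Y_{R}\ra X_{R}\) of the partial resolution over \(R\); I must lift \(\ms{M}\) to a locally free \(\ms{M}_{R}\) on \(Y_{R}\) compatibly and then check the axioms of Definition \ref{def.fib} persist. The obstruction to lifting the locally free sheaf lies in \(\xt{2}{Y_{0}}{\ms{M}_{0}}{\ms{M}_{0}}\otimes(\text{kernel ideal})\), which vanishes because \(\ms{M}_{0}\) is locally free of rank \(r\) on the surface \(Y_{0}\) and the sheafy \(\ShExt\) of a locally free sheaf vanishes in positive degrees, while \(\mr{R}^{q}(f_{0})_{*}=0\) for \(q\geq 2\) since fibres are at most curves — so the local-to-global spectral sequence kills \(\Ext^{2}\). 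Thus a lift \(\ms{M}_{R}\) exists; it is automatically \(R\)-flat and locally free since these are open conditions, \(f_{*}\ms{M}_{R}\) deforms \(M\) by Proposition \ref{prop.extbc}, and one uses Lemma \ref{lem.fib} to promote it to an honest object of \(\cdf{}{(Y_{0}/X_{0},\ms{M}_{0}/M_{0})}(R)\); conditions (iii) of Definition \ref{def.fib} hold by Example \ref{ex.fib} and Proposition \ref{prop.extbc}. Surjectivity of \(\beta\) follows by taking \(S=\Spec k\). For (iii), when \(\ms{M}_{0}\) is rigid, i.e.\ \(\xt{1}{Y_{0}}{\ms{M}_{0}}{\ms{M}_{0}}=0\), the lift \(\ms{M}_{R}\) above is moreover \emph{unique} up to isomorphism, so \(\beta\) is not merely smooth but bijective on each \(\df{}{Y/X}(R)\); hence an isomorphism of functors. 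Finally, a Wunram module (and any rank-one reflexive) has rigid strict transform: \(\xt{1}{\tilde X}{\ms{F}}{\ms{F}}\) fits, via the local-to-global sequence and \(\mr{R}^{1}\pi_{*}\ShEnd(\ms{F})\), with \(\cH^{1}\) of bundles whose vanishing is exactly the Wunram condition \(\mr{R}^{1}\pi_{*}\ms{F}^{\vee}=0\) twisted appropriately — and on an RDP every reflexive module is Wunram, while for \(\rk M=1\) the argument of the base case above applies directly. I expect the bookkeeping in (iii) — pinning down precisely which cohomology group controls rigidity of \(\ms{M}\) on the (possibly singular) partial resolution \(Y\) versus on the minimal resolution \(\tilde X\), and checking the two agree under the contraction \(\tilde X\ra Y\) — to require the most care.
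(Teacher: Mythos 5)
Your overall architecture is close to the paper's for (i) (recover the deformation by blowing up \((X,M)\), using Corollary \ref{cor.blowup}) and you correctly announce the determinant-route of Lemma \ref{lem.MJ}, but the two places you defer are exactly where the proof lives, and your proposed substitutes would not close them. The most serious issue is in (ii)--(iii): you prove smoothness of \(\beta\) by an infinitesimal obstruction calculus (``obstruction in \(\xt{2}{Y_{0}}{\ms{M}_{0}}{\ms{M}_{0}}\otimes I\)'', lifts classified/identified by \(\Ext^{1}\)), which only makes sense for square-zero extensions. Definition \ref{def.smooth}, however, demands surjectivity of \(F(R)\ra F(S)\times_{G(S)}G(R)\) for \emph{every} closed embedding \(S\ra R\) in \(\cat{H}_{k}\), e.g. \(\Spec k\hra\Spec k[t]^{\tn{h}}\), which is precisely the case used later for flops (Theorem \ref{thm.flop}); there the ideal is not nilpotent and no obstruction class exists. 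The paper's proof is built to avoid this: \(\beta\) factors through \(\eta\) of Lemma \ref{lem.MJ}(ii), whose smoothness is proved by lifting the extension \(0\ra\Q_{Y}^{\oplus r-1}\ra\ms{M}\ra\bigwedge^{r}\ms{M}\ra 0\) using surjectivity of the \(\Ext^{1}\) base-change map from Proposition \ref{prop.extbc} (valid over arbitrary noetherian henselian bases, needing only \(\xt{2}{Y_{0}}{\bigwedge^{r}\ms{M}_{0}}{\Q_{Y_{0}}^{\oplus r-1}}=0\)), combined with Lemma \ref{lem.piciso}, where the invertible sheaf is lifted by lifting a local equation of a transversal Cartier divisor --- again with no nilpotence assumption. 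The same generality problem hits your uniqueness claim in (iii): the paper's Lemma \ref{lem.inj} gets injectivity over arbitrary \(S\) by lifting \(\id_{\ms{M}_{0}}\) to a homomorphism \(\ms{M}\ra\ms{M}'\) via Proposition \ref{prop.extbc}, not by the infinitesimal classification of lifts. So as written, your argument establishes (ii)--(iii) only over Artinian extensions, which is strictly weaker than the theorem as stated and insufficient for its applications.

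For (i), the step you flag as ``the main obstacle'' is indeed the whole content, and your proposed fix is not the right tool: flatness of \(\Bl_{\llb M\rrb}(X)\) over \(S\), its central fibre, and its identification with the given \(Y\) are not consequences of cohomology-and-base-change, and no canonicity of the isomorphism \(\llb M\rrb\cong J\) is needed. The paper's Proposition \ref{prop.blowingup} argues differently: since \(\ms{M}\cong f^{\tri}M\) is locally free (Lemma \ref{lem.fib}(iii)), the universal property of the flattening blow-up (Proposition \ref{prop.blowup}) gives a unique \(X\)-map \(g\co Y\ra \Bl_{M}(X)\); Corollary \ref{cor.blowup} identifies the central fibre of \(\Bl_{M}(X)\) with \(Y_{0}\), so \(g\) is an isomorphism on central fibres, and the fibrewise flatness criterion plus properness (as in the proof of Lemma \ref{lem.U}(ii)) force \(g\) to be an isomorphism --- flatness of the blow-up is \emph{transported} from \(Y\), never proved a priori, and the detour through \(\llb M\rrb\) is unnecessary. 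Finally, the Wunram case of (iii) is left as ``bookkeeping'': the needed statement is \(\xt{1}{Y_{0}}{\ms{M}_{0}}{\ms{M}_{0}}=0\) on the (singular) partial resolution, and the paper derives it by \(\ms{M}^{\vee}\cong g_{*}(\ms{F}^{\vee})\), the Leray sequence giving \(\mr{R}^{1}\hspace{-0.12em}f_{*}\ms{M}^{\vee}=0\) from \(\mr{R}^{1}\pi_{*}\ms{F}^{\vee}=0\), and then the surjection \(\cH^{1}(Y,\shm{}{Y}{\ms{M}}{\Q_{Y}^{\oplus n}})\ra\cH^{1}(Y,\snd{}{Y}{\ms{M}})\) coming from global generation; this argument (or your alternative via \(\snd{}{Y}{\ms{M}}\cong g_{*}\snd{}{\tilde{Y}}{\ms{F}}\) and Leray) has to be written out, since rigidity on the minimal resolution is not the statement the injectivity lemma consumes.
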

The proof of Theorem \ref{thm.square} is divided into several steps. The following result implies (i) and the stronger statement will be needed in the application to flops.
\begin{prop}\label{prop.blowingup}
Let \(f_{0}\co Y_{0}\ra X_{0}\) be the blowing-up of a rational surface singularity in a reflexive \(\Q_{X_{0}}\)-module \(M_{0}\)\textup{.} Let \(\ms{M}_{0}\) denote the strict transform  \(f^{\tri}M_{0}\)\textup{.} Put 
\begin{equation*}
\df{\dprime}{(X_{0},M_{0})}=\im\{\alpha\co \df{}{(Y_{0}/X_{0},\ms{M}_{0}/M_{0})}\ra \df{}{(X_{0},M_{0})}\}
\end{equation*}
Then blowing-up gives a map \(\gamma \co\df{\dprime}{(X_{0},M_{0})}\ra\df{}{(Y_{0}/X_{0},\ms{M}_{0}/M_{0})}\) such that the composition
\begin{equation*}
\df{}{(Y_{0}/X_{0},\ms{M}_{0}/M_{0})}\xra{\hspace{0.3em}\alpha\hspace{0.3em}}\df{\dprime}{(X_{0},M_{0})}\xra{\hspace{0.3em}\gamma\hspace{0.3em}}\df{}{(Y_{0}/X_{0},\ms{M}_{0}/M_{0})}
\end{equation*}
is the identity\textup{.}
\end{prop}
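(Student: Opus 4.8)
The plan is to define $\gamma$ by blowing up and to reduce the whole statement to showing that blowing up recovers the given family. Send a deformation $(X,M)$ of $(X_{0},M_{0})$ lying in $\df{\dprime}{(X_{0},M_{0})}(S)$ to $\gamma(X,M)=\bigl(f_{M}\colon\Bl_{M}(X)\to X,\ f_{M}^{\tri}M/M\bigr)$, with the evident action on morphisms; compatibility with base change is Corollary \ref{cor.blowup}(ii). Then the proposition follows from the claim: \emph{for every deformation $(f\colon Y\to X,\ms{M}/M)$ of $(f_{0},\alpha_{0})$ in $\df{}{(Y_{0}/X_{0},\ms{M}_{0}/M_{0})}(S)$ there is a canonical $X$-isomorphism $\Bl_{M}(X)\cong Y$ taking $f_{M}^{\tri}M$ to $\ms{M}$.} Granting it, $\gamma(X,M)\cong (Y/X,\ms{M}/M)\in\df{}{(Y_{0}/X_{0},\ms{M}_{0}/M_{0})}(S)$, so $\gamma$ is well defined; then $\gamma\alpha$ carries $(Y/X,\ms{M}/M)$ to $\gamma(X,M)\cong(Y/X,\ms{M}/M)$, i.e.\ $\gamma\alpha=\id$; and as a by-product $\alpha$ is injective (Theorem \ref{thm.square}(i)), two deformations of $(f_{0},\alpha_{0})$ over the same $(X,M)$ being both isomorphic to $\Bl_{M}(X)$.

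For the claim, first collect the inputs. By Lemma \ref{lem.fib}(iii) the natural map $f^{\tri}M\to\ms{M}$ is an isomorphism (here $\ms{M}_{0}=f_{0}^{\tri}M_{0}$ is globally generated by Proposition \ref{prop.lCM}(i)), and $\ms{M}$ is locally free, so $f^{\tri}M$ is locally free of rank $r$; moreover $\llb M\rrb\cong f_{*}\bigwedge^{r}\ms{M}$ by Lemma \ref{lem.MJ}(i). Pick a lift $t\in\Gamma(\Q_{X})$ of the element $t_{0}$ of Lemma \ref{lem.U}: since $Y_{0}=\Bl_{M_{0}}(X_{0})$ is a partial resolution (Theorem \ref{thm.MW}(i)) it is an isomorphism over $U_{0}=D(t_{0})$, and $t_{0}$ is a non-zero-divisor on the normal --- hence integral --- surface $Y_{0}$, so Lemma \ref{lem.U}(ii) gives that $f$ is an isomorphism over $U=D(t)$ and $f^{-1}(U)$ is dense in $Y$. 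Thus $f\colon Y\to X$ satisfies conditions (i)--(ii) of Proposition \ref{prop.blowup} for $\ms{F}=M$, and its universal property produces a unique $X$-morphism $u\colon Y\to\Bl_{M}(X)$; by Corollary \ref{cor.blowup}(i) one has $u^{*}(f_{M}^{\tri}M)\cong f^{\tri}M\cong\ms{M}$, and taking $\bigwedge^{r}$, $u^{*}\bigl(\bigwedge^{r}f_{M}^{\tri}M\bigr)\cong\ch_{1}(\ms{M})$.

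Now show $u$ is an isomorphism. On the closed fibre the exceptional curves of $Y_{0}=\Bl_{M_{0}}(X_{0})$ are exactly the $E_{i}$ with $\ch_{1}(\pi^{\tri}M_{0}).E_{i}\neq 0$ (Theorem \ref{thm.MW}(i)), so $\ch_{1}(\ms{M}_{0})$ meets each exceptional curve of $f_{0}$ positively and is $f_{0}$-ample by Proposition \ref{prop.pic}(ii); as $X$ is local, openness of the relative-ampleness locus makes $\ch_{1}(\ms{M})$ $f$-ample on $Y$. Hence for each $x\in X$ the fibre $u_{x}\colon Y_{x}\to\Bl_{M}(X)_{x}$ pulls a line bundle back to the ample $\ch_{1}(\ms{M})|_{Y_{x}}$, so contracts nothing of positive dimension; being proper with finite fibres, $u$ is finite --- in particular surjective, its image being closed and containing the dense $f^{-1}(U)$. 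Corollary \ref{cor.blowup}(ii) then applies to the square over $\Spec k\hookrightarrow S$ (the density of $U_{0}$ in $\Bl_{M}(X)\times_{S}\Spec k$ follows since $u_{0}$ is finite surjective and $U_{0}$ is dense in the integral $Y_{0}$) and gives $\Bl_{M}(X)\times_{S}\Spec k\cong\Bl_{M_{0}}(X_{0})=Y_{0}$, under which $u_{0}$ is the classifying morphism of the blow-up, namely the identity. So $u$ is finite, with $S$-flat source, inducing an isomorphism on closed fibres; at each closed point of $\Bl_{M}(X)$ --- all lying over the closed point of $S$ since $\Bl_{M}(X)$ is proper over the local $X$ --- Nakayama shows $\Q_{\Bl_{M}(X)}\to u_{*}\Q_{Y}$ is an isomorphism, so its kernel and cokernel vanish and $u$ is an isomorphism. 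Finally $\ms{M}\cong f^{\tri}M\cong u^{*}f_{M}^{\tri}M$ matches the strict transforms, proving the claim.

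The main obstacle is this last step: upgrading the tautological morphism $u$ to an isomorphism. The total spaces $Y$ and $\Bl_{M}(X)$ need not be normal --- only their closed fibres are rational surface singularities --- so Zariski's main theorem is not directly available; one has to combine the $f$-ampleness of $\ch_{1}(\ms{M})$ (for finiteness of $u$), base change through Corollary \ref{cor.blowup} (to trivialise $u$ over the closed point), and a Nakayama/flatness comparison (to propagate the isomorphism over $S$). A secondary difficulty is checking that the hypotheses of Lemma \ref{lem.U} and Corollary \ref{cor.blowup} --- isomorphism over $U$ and density of the pertinent loci --- really do pass from the closed fibre to the entire family, which is exactly where finiteness and surjectivity of $u$ get used.
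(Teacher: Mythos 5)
Your argument is correct and follows the same skeleton as the paper's proof: blow up the family, i.e.\ set \(\gamma(X,M)=(\Bl_{M}(X)\to X,\,f_{M}^{\tri}M/M)\), and reduce everything to showing that the canonical comparison map \(u\co Y\to\Bl_{M}(X)\) coming from the universal property of Proposition \ref{prop.blowup} (available by Lemma \ref{lem.U} and Lemma \ref{lem.fib}) is an isomorphism matching \(\ms{M}\) with the strict transform. Where you diverge is in how \(u\) is promoted to an isomorphism. The paper first identifies the restriction of \(u\) to the central fibre as an isomorphism and then upgrades it by the local flatness criterion/{\'e}tale/proper-hence-finite/Nakayama argument already set up in the proof of Lemma \ref{lem.U}; you instead get finiteness of \(u\) from \(f\)-ampleness of \(\ch_{1}(\ms{M})\) via openness of the relative-ampleness locus (legitimate here since \(X\) is local, so a neighbourhood of the closed point is everything), get surjectivity from properness plus density of \(f^{-1}(U)\), use that surjectivity to verify the density hypothesis needed to apply Corollary \ref{cor.blowup}(ii) over \(\Spec k\hra S\) and so identify the central fibre of \(\Bl_{M}(X)\) with \(Y_{0}\) and \(u_{0}\) with the identity, and finish by Nakayama using finiteness of \(u\) and \(S\)-flatness of \(Y\). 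Both routes work. Yours is longer but has the merit of making explicit the density hypothesis in Corollary \ref{cor.blowup}(ii) for the central fibre, a point the paper passes over tersely; the paper's route is lighter, since once \(u_{0}\) is known to be an isomorphism the flatness-criterion/{\'e}tale argument yields finiteness for free, so the ampleness detour (which in your write-up is not even needed for surjectivity, only for the final Nakayama step) can be dispensed with. Your closing observation that injectivity of \(\alpha\) drops out as a by-product agrees with how the paper uses this proposition for Theorem \ref{thm.square}(i).
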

\begin{proof}
Let \((f\co Y\ra X,\ms{M}/M)\) be an element in \(\df{}{(Y_{0}/X_{0},\ms{M}_{0}/M_{0})}(S)\). Let \(f'\co Y'\ra X\) denote the blowing-up of \(X\) in \(M\) with \((f')^{*}M\ra (f')^{\tri}M=\ms{M}'\) the quotient map of sheaves. It gives a map of pairs \((Y'/X,\ms{M}'/M)\) which is a deformation of \((Y_{0}/X_{0},\ms{M}_{0}/M_{0})\) by Lemma \ref{lem.U} and Corollary \ref{cor.blowup}.  
By the universal property in Proposition \ref{prop.blowup} there is a unique factorisation \(g\co Y\ra Y'\) of \(f\) with \(g^{*}\ms{M}'\cong \ms{M}\). The restriction of \(g\) to the central fibre is an isomorphism. It follows that \(g\) is an isomomorphism (cf. the proof of Lemma \ref{lem.U}) which implies that \((f,\ms{M}/M)\cong (f',\ms{M}'/M)\) as deformations. Hence \(\gamma\) is well defined with \(\gamma\alpha\simeq\id\).
\end{proof}
\begin{lem}\label{lem.inj}
Let \(f_{0}\co Y_{0}\ra X_{0}\) be a partial resolution of a normal surface singularity and \(\ms{M}_{0}\) a locally free\textup{,} coherent \(\Q_{Y_{0}}\)-module\textup{.} Put \(M_{0}=(f_{0})_{*}\ms{M}_{0}\)\textup{.} Assume \(\xt{1}{Y_{0}}{\ms{M}_{0}}{\ms{M}_{0}}=0\)\textup{.}
Then the forgetful map \(\df{}{(Y_{0}/X_{0},\ms{M}_{0}/M_{0})}\ra\df{}{Y_{0}/X_{0}}\) is injective\textup{.}
\end{lem}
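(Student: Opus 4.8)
The plan is to show that the fibre of the forgetful map over a fixed deformation of \(f_{0}\) is a single point. So let \(\xi=(f,\alpha)\) and \(\xi'=(f',\alpha')\) be objects of \(\cdf{}{(Y_{0}/X_{0},\ms{M}_{0}/M_{0})}(S)\) with the same image in \(\df{}{Y_{0}/X_{0}}(S)\); we must produce an isomorphism \(\xi\cong\xi'\) over \(\id_{S}\). By assumption the underlying deformations of \(f_{0}\) are isomorphic, so after choosing such an isomorphism and transporting \(\ms{M}'\) along it we may assume that \(f=f'\co Y\ra X\) and that we are given two \(S\)-flat coherent \(\Q_{Y}\)-modules \(\ms{M},\ms{M}'\) together with identifications of their restrictions to the closed fibre \(Y_{0}\) with \(\ms{M}_{0}\). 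It then suffices to produce an \(\Q_{Y}\)-linear isomorphism \(\theta\co\ms{M}\ra\ms{M}'\) restricting to \(\id_{\ms{M}_{0}}\) over \(Y_{0}\): applying \(f_{*}\) and the canonical identification \(M\cong f_{*}\ms{M}\) of Lemma \ref{lem.fib}(i) then produces the remaining isomorphism \(M\ra M'\) and the compatibility with \(\alpha,\alpha'\), hence an isomorphism of pairs.

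To construct \(\theta\) I would apply Proposition \ref{prop.extbc} to the proper map \(f\) with \(\ms{E}=\ms{M}\) and \(\ms{F}=\ms{M}'\), both \(S\)-flat, \(X\) being local and henselian. The central fibres of \(\ms{E}\) and \(\ms{F}\) are both \(\ms{M}_{0}\), and \(\xt{1}{Y_{0}}{\ms{M}_{0}}{\ms{M}_{0}}=0\) by hypothesis, so the base change map \(c_{k}^{1}\co\xt{1}{Y}{\ms{M}}{\ms{M}'}\otimes_{\Q_{S}}k\ra\xt{1}{Y_{0}}{\ms{M}_{0}}{\ms{M}_{0}}\) has target \(0\) and is in particular surjective. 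Proposition \ref{prop.extbc}(i) then gives that \(c_{k}^{1}\) is an isomorphism, whence \(\xt{1}{Y}{\ms{M}}{\ms{M}'}\otimes_{\Q_{S}}k=0\), and therefore \(\xt{1}{Y}{\ms{M}}{\ms{M}'}=0\) by Nakayama's lemma, this being a coherent \(\Q_{X}\)-module over the local ring \(X=\Spec B\). In particular it is \(S\)-flat, so by Proposition \ref{prop.extbc}(ii) the map \(c_{k}^{0}\) is surjective; applying \ref{prop.extbc}(i) once more, now with \(n=0\), we get that the restriction map \(\hm{}{Y}{\ms{M}}{\ms{M}'}\otimes_{\Q_{S}}k\ra\hm{}{Y_{0}}{\ms{M}_{0}}{\ms{M}_{0}}\) is an isomorphism. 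Lifting \(\id_{\ms{M}_{0}}\) through the surjection \(\hm{}{Y}{\ms{M}}{\ms{M}'}\thr\hm{}{Y}{\ms{M}}{\ms{M}'}\otimes_{\Q_{S}}k\) yields a homomorphism \(\theta\co\ms{M}\ra\ms{M}'\) with \(\theta\otimes_{\Q_{S}}k=\id_{\ms{M}_{0}}\).

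Finally \(\theta\) is an isomorphism. The sheaves \(\coker\theta\) and \(\Ker\theta\) are coherent, and tensoring the defining exact sequences of \(\theta\) over \(\Q_{S}\) with \(k\) --- using \(S\)-flatness of \(\ms{M}'\) for the kernel --- shows that both restrict to the zero sheaf on \(Y_{0}\), so their supports are closed subsets of \(Y\) disjoint from \(Y_{0}\). But every point of \(Y\) specialises to a point of \(Y_{0}\), because \(f\) is proper and the unique closed point of the local scheme \(X=\Spec B\) lies in \(X_{0}\); hence a closed subset of \(Y\) disjoint from \(Y_{0}\) is empty. Thus \(\coker\theta=\Ker\theta=0\), and \(\theta\) is an isomorphism.

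The main point is the passage via Proposition \ref{prop.extbc} from the fibrewise vanishing \(\xt{1}{Y_{0}}{\ms{M}_{0}}{\ms{M}_{0}}=0\) to the vanishing \(\xt{1}{Y}{\ms{M}}{\ms{M}'}=0\) over the possibly non-artinian base; once that is in hand, the existence of \(\theta\) and its invertibility are formal. The only step that needs a little care --- but is routine --- is upgrading the isomorphism \(\theta\) of modules to an isomorphism of the full datum \((f,\alpha)\co(Y,\ms{M})\ra(X,M)\) in the fibred category, for which one invokes the functoriality of \(f_{*}\) together with the identifications of Lemma \ref{lem.fib}(i).
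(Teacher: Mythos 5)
Your proof is correct and follows essentially the same route as the paper: identify the two deformations of \(f_{0}\), use Proposition \ref{prop.extbc} together with the vanishing of \(\xt{1}{Y_{0}}{\ms{M}_{0}}{\ms{M}_{0}}\) to see that \(\hm{}{Y}{\ms{M}}{\ms{M}'}\) is \(S\)-flat and commutes with base change, lift \(\id_{\ms{M}_{0}}\) to \(\theta\co\ms{M}\ra\ms{M}'\), deduce that \(\theta\) is an isomorphism from \(S\)-flatness of \(\ms{M}'\), and push down by \(f_{*}\) to get \(M\cong M'\). You merely spell out the Nakayama/support bookkeeping that the paper leaves implicit.
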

\begin{proof}
Given elements \((Y/X,\ms{M}/M)\) and \((Y'/X',\ms{M}'/M')\) in \(\df{}{(Y_{0}/X_{0},\ms{M}_{0}/M_{0})}(S)\) such that \((f\co Y\ra X)\cong (f'\co Y'\ra X')\) as deformations of \(f_{0}\). We use this isomorphism to identify \(f'\) with \(f\). Corollary \ref{cor.extbc} gives that the base change map of \(\mr{H}^{0}(\Q_{Y})\)-modules \(\hm{}{Y}{\ms{M}}{\ms{M}'}\ra\nd{}{Y_{0}}{\ms{M}_{0}}\) is a deformation. In particular there is an \(\Q_{Y}\)-linear homomorphism \(\theta\co \ms{M}\ra\ms{M}'\) lifting \(\id_{\ms{M}_{0}}\). It follows that \(\theta\) is an isomorphism of deformations (since \(\ms{M}'\) is \(S\)-flat) which, pushed down, gives an isomorphism \(M\cong M'\).
\end{proof}
\begin{lem}\label{lem.piciso}
Let \(f_{0}\co Y_{0}\ra X_{0}\) be a partial resolution of a rational surface singularity and \(\ms{L}_{0}\) an invertible \(\Q_{Y_{0}}\)-module generated by its global sections\textup{.} 
Put \(M_{0}=(f_{0})_{*}\ms{L}_{0}\)\textup{.} Then the forgetful map 
$
\df{}{(Y_{0}/X_{0},\ms{L}_{0}/M_{0})}\lra\df{}{Y_{0}/X_{0}}
$
is an isomorphism\textup{.}
\end{lem}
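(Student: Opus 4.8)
The map $\phi\co\df{}{(Y_0/X_0,\ms{L}_0/M_0)}\ra\df{}{Y_0/X_0}$ is a natural transformation of pointed functors on $\cat{H}_k$, so by Definition~\ref{def.smooth} it will be an isomorphism as soon as it is shown to be \emph{smooth} (which, applied to $\Spec k\hra R$, already yields surjectivity of $\phi(R)$ for every $R$) and \emph{injective}. Both properties will rest on the vanishing $\mr{H}^1(Y_0,\Q_{Y_0})=\mr{H}^2(Y_0,\Q_{Y_0})=0$, which I would obtain as follows: by Proposition~\ref{prop.modific}(ii) the normal surface $Y_0$ has only rational singularities, so for a resolution $\tilde X\ra Y_0\ra X_0$ the Leray spectral sequence together with rationality of $X_0$ forces $\mr{R}^1(f_0)_*\Q_{Y_0}=0$; since $X_0$ is affine and $f_0$ is proper with fibres of dimension $\leq 1$ one also has $\mr{R}^q(f_0)_*\Q_{Y_0}=0$ for $q\geq 2$, whence $\mr{H}^q(Y_0,\Q_{Y_0})=\mr{H}^0(X_0,\mr{R}^q(f_0)_*\Q_{Y_0})=0$ for $q=1,2$. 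Injectivity is then immediate from Lemma~\ref{lem.inj} applied with $\ms{M}_0=\ms{L}_0$ (note $M_0=(f_0)_*\ms{L}_0$), since $\xt{1}{Y_0}{\ms{L}_0}{\ms{L}_0}\cong\mr{H}^1(Y_0,\Q_{Y_0})=0$.

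For smoothness, let $S\hra R$ be a closed immersion in $\cat{H}_k$, let $(f\co Y\ra X,\ms{L}/f_*\ms{L})$ be an object over $S$ and $(f_R\co Y_R\ra X_R)$ a deformation of $Y_0/X_0$ over $R$ restricting to $f$; the task is to extend $\ms{L}$ to an invertible sheaf $\ms{L}_R$ on $Y_R$. Over a thickening this is the standard deformation theory of invertible sheaves: the obstruction lies in a group of the form $\mr{H}^2(Y_0,\Q_{Y_0})\ot(-)=0$ and the set of extensions is a torsor under $\mr{H}^1(Y_0,\Q_{Y_0})\ot(-)=0$, so the extension exists and is unique. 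For a general henselian $R$ one assembles the unique extensions over the Artinian quotients into a line bundle on the formal completion, algebraises it over $\wh{B}_R$ by Grothendieck's existence theorem, and --- since $Y_R\ra X_R$ is proper and of finite presentation --- descends it to $Y_R$ by Artin approximation; uniqueness at each finite level guarantees compatibility with $\ms{L}$ over $S$. It then remains to verify the conditions of Definition~\ref{def.fib} for $(f_R,\ms{L}_R/(f_R)_*\ms{L}_R)$: $\ms{L}_R$ is $R$-flat because it is invertible on the $R$-flat $Y_R$; since $\ms{L}_0$ is globally generated, $\mr{R}^1(f_0)_*\ms{L}_0=0$ by Proposition~\ref{prop.pic}\,(iib), so applying Proposition~\ref{prop.extbc} (via $\xt{n}{Y_R}{\Q_{Y_R}}{\ms{L}_R}\cong\mr{H}^n(Y_R,\ms{L}_R)$, running the base‑change criteria downward from $n\geq 2$) shows that $(f_R)_*\ms{L}_R$ is $R$-flat with central fibre $(f_0)_*\ms{L}_0=M_0$ and that $\mr{R}^1(f_R)_*\ms{L}_R=0$, while $\mr{R}^1(f_R)_*\Q_{Y_R}=0$ for the same reason; taking $M_R=(f_R)_*\ms{L}_R$ together with the counit $f_R^*M_R\ra\ms{L}_R$ then gives an object of $\cdf{}{(Y_0/X_0,\ms{L}_0/M_0)}(R)$ lifting the given data (that $(X_R,M_R)$ is an object of $\cdf{}{(X_0,M_0)}(R)$ follows as in Lemma~\ref{lem.fib}). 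With both properties in hand, $\phi(R)$ is bijective for every $R$, i.e.\ $\phi$ is an isomorphism.

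The step I expect to be the main obstacle is precisely the passage from infinitesimal to honest extensions of $\ms{L}_0$ over a non-Artinian henselian base: the obstruction-theoretic input only controls thickenings, so one genuinely needs formal GAGA over the completion together with an approximation argument --- equivalently, the assertion that $\mr{H}^1(Y_0,\Q_{Y_0})=\mr{H}^2(Y_0,\Q_{Y_0})=0$ forces $\Pic(Y_R)\ra\Pic(Y_0)$ to be bijective for every deformation $Y_R$ of $Y_0$. The remaining verifications are routine applications of the base-change results of Section~\ref{sec.prel} and of Lemma~\ref{lem.fib}.
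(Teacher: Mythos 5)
Your injectivity step and your bookkeeping of the flatness conditions are fine and essentially what the paper does: \(\xt{1}{Y_{0}}{\ms{L}_{0}}{\ms{L}_{0}}\cong \mr{H}^{1}(Y_{0},\Q_{Y_{0}})=0\) (rationality of the singularities of \(Y_{0}\), Leray, fibre dimension \(\leq 1\), \(X_{0}\) affine) feeds Lemma \ref{lem.inj}, and the verification that \((f_{R})_{*}\ms{L}_{R}\) is flat with the right fibres is a routine use of Proposition \ref{prop.extbc}. The genuine gap is in your surjectivity argument over a general (non-Artinian) base in \(\cat{H}_{k}\). The cohomological lifting with obstruction in \(\mr{H}^{2}\) and torsor under \(\mr{H}^{1}\) only handles nilpotent thickenings, and the bridge you propose --- formal completion, Grothendieck existence, Artin approximation --- does not apply as stated: \(Y_{R}\) is proper over \(X_{R}\) but \emph{not} over \(\Spec R\) (and \(X_{R}\) is a henselian local, non-proper, not even finite-type \(R\)-scheme), so formal GAGA along \(\fr{m}_{R}\) is not available for \(Y_{R}\). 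To make this work you would have to base change to the completed local ring of \(X_{R}\), algebraise there, and then descend back across the completion by a genuine approximation argument; moreover, even granting algebraisation, ``uniqueness at each finite level'' only gives agreement with the given \(\ms{L}\) on the infinitesimal neighbourhoods of \(Y_{0}\) in \(Y_{S}\), and passing from that to \(\ms{L}_{R}|_{Y_{S}}\cong\ms{L}\) for a non-Artinian \(S\) is again a limit statement of exactly the kind you are trying to prove (injectivity of \(\Pic(Y_{S})\ra\limproj\Pic(Y_{S_{n}})\)). So the central step of your proof is unproved, not merely technical.

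The paper avoids all of this by linearising the lifting problem. By Proposition \ref{prop.pic}\,(iii) one writes \(\ms{L}_{0}\cong\Q_{Y_{0}}(D_{0})\) with \(D_{0}\) an effective Cartier divisor meeting \(E(f_{0})_{\tn{red}}\) transversally; near \(\Supp D_{0}\) the divisor is cut out by one non-zero-divisor \(t_{0}\), and \emph{any} lift \(t\) of \(t_{0}\) to \(\Q_{Y}\) is a non-zero-divisor defining an \(S\)-flat Cartier divisor \(D\) deforming \(D_{0}\). Then \(\ms{L}:=\Q_{Y}(D)\) and \(M:=f_{*}\Q_{Y}(D)\) give the required deformation of \((f_{0},\ms{L}_{0}/M_{0})\) over an arbitrary \(S\in\cat{H}_{k}\), with no Artinian induction, no formal geometry and no approximation; note also that only bijectivity of the forgetful map for each \(S\) is needed (injectivity plus surjectivity), not smoothness in the sense of Definition \ref{def.smooth}. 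If you want to salvage your route, you must either restrict the functors to Artinian bases (which the paper does not) or supply the missing algebraisation/descent argument over the completion of \(\Q_{X_{R}}\); replacing it by the divisor construction is both shorter and complete.
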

\begin{proof} 
By Lemma \ref{lem.inj} we only have to show surjectivity. Let \(f\co Y\ra X\) be an element in \(\df{}{Y_{0}/X_{0}}(S)\). Proposition \ref{prop.pic} implies that there is an effective Cartier divisor \(D_{0}\) in \(Y_{0}\) intersecting \(E(f)_{\tn{red}}\) transversally with \(\ms{L}_{0}\cong\Q_{Y_{0}}(D_{0})\). Locally around \(\Supp D_{0}\) there is a non-zero-divisor \(t_{0}\) defining \(D_{0}\). Any local section \(t\) in \(\Q_{Y}\) lifting \(t_{0}\) is a non-zero-divisor and defines an \(S\)-flat divisor \(D\) in \(Y\). Put \(M=f_{*}\Q_{Y}(D)\). Then \((f,\Q_{Y}(D)/M)\) is a deformation of \((f_{0},\ms{L}_{0}/M_{0})\).
\end{proof}
\begin{proof}[Proof of Theorem {\ref{thm.square}}]
Proposition \ref{prop.blowingup} implies injectivity of \(\alpha\). By Lemma \ref{lem.MJ} and Lemma \ref{lem.piciso}, \(\beta\) is smooth, and with Lemma \ref{lem.inj} an isomorphism if \(f^{\tri}M\) is rigid.

For the Wunram case, let \(g\co \tilde{Y}\ra Y\) be the minimal resolution. Put \(\pi=fg\) and
\(\ms{F}=\pi^{\tri}M\). Then \(g^{*}\ms{M}\cong\ms{F}\) and \(g_{*}\ms{F}\cong \ms{M}\) by Proposition \ref{prop.lCM}.
By Theorem \ref{thm.MW}, \(Y\) is normal so \(g_{*}\Q_{\tilde{Y}}\cong \Q_{Y}\). The natural isomorphism \(\ms{M}^{\vee}\cong g_{*}(\ms{F}^{\vee})\) follows. 
The Leray spectral sequence gives a short exact sequence
\begin{equation}
0\ra \mr{R}^{1}\hspace{-0.12em}f_{*}(g_{*}(\ms{F}^{\vee}))\lra \mr{R}^{1}\pi_{*}(\ms{F}^{\vee})\lra f_{*}\mr{R}^{1}g_{*}(\ms{F}^{\vee})\ra 0.
\end{equation}
If \(M\) is Wunram then \({\mr{R}}^{1}\pi_{*}(\ms{F}^{\vee})=0\) and so \(\mr{R}^{1}\hspace{-0.12em}f_{*}(\ms{M}^{\vee})=0\). As \(\ms{M}\) is generated by its global sections there is a surjection \(\Q_{Y}^{\oplus n}\ra\ms{M}\). It induces a surjection 
\begin{equation}
\cH^{1}(Y,\shm{}{\Q_{Y}}{\ms{M}}{\Q_{Y}^{\oplus n}})\lra\cH^{1}(Y,\snd{}{\Q_{Y}}{\ms{M}}).
\end{equation}
Hence \(\ms{M}\) is rigid.
\end{proof}
\begin{rem}
The following result is a corollary of Theorem \ref{thm.square}. The proof shows that \(\df{\dprime}{(X_{0},M_{0})}\) in Proposition \ref{prop.blowingup} is the largest subfunctor of \(\df{}{(X_{0},M_{0})}\) for which blowing up gives a flat family.
\end{rem}
\begin{cor}\label{cor.square}
Let \(f_{0}\co Y_{0}\ra X_{0}\) be the blowing-up of a rational surface singularity in a reflexive \(\Q_{X_{0}}\)-module \(M_{0}\)\textup{.} Put \(\ms{M}_{0}=f_{0}^{\tri}M_{0}\). 
Then the functors \(\df{}{(X_{0},M_{0})}\)\textup{,} \(\df{}{(Y_{0}/X_{0},\hspace{0.1em}\ms{M}_{0}/M_{0})}\) and \(\df{}{Y_{0}/X_{0}}\) all have versal elements\textup{.} 
\end{cor}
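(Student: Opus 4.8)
The plan is to deduce existence of versal elements from a standard obstruction-theory criterion (Schlessinger's conditions, adapted to the henselian/algebraic setting of \cite{art:74b}), once we know the relevant tangent and obstruction spaces are finite-dimensional and the functors admit a hull. First I would recall that $\df{}{(X_{0},M_{0})}$ has a versal element: $X_{0}$ is an algebraic singularity, $M_{0}$ is coherent, and the deformation theory of a pair $(X,M)$ with $X$ affine algebraic is governed by the cotangent complex of $X$ together with $\Ext^{*}_{X}(M,M)$ and the mixed terms; all of these are finite-dimensional modules over the residue field because $X_{0}$ is isolated in the relevant sense (a normal surface singularity) and $M_{0}$ is finitely generated. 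So $\df{}{(X_{0},M_{0})}$ satisfies (H1)--(H3) and has finite-dimensional tangent space, hence has a hull, and by Artin approximation the hull is algebraic. This is essentially standard and I would cite it rather than reprove it.

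Next I would handle $\df{}{Y_{0}/X_{0}}$. Here the key point is that $f_{0}\co Y_{0}\ra X_{0}$ is proper and $Y_{0}$ has only rational surface singularities (Proposition \ref{prop.modific}), so $\mr{R}^{1}(f_{0})_{*}\Q_{Y_{0}}=0$ and higher direct images vanish; deformations of $Y_{0}$ over $X_{0}$ are controlled by the relative cotangent complex, whose cohomology sheaves have support in the exceptional locus and thus finite-dimensional global sections after pushing to the henselian point. Proposition \ref{prop.extbc} (in the cohomology-and-base-change form noted in the Example following it) gives the needed base-change compatibility so that the tangent and obstruction functors are finitely generated and commute with the operations required by Schlessinger. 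Hence $\df{}{Y_{0}/X_{0}}$ has a hull, algebraic by approximation.

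Finally, for $\df{}{(Y_{0}/X_{0},\ms{M}_{0}/M_{0})}$ I would use Theorem \ref{thm.square}: the forgetful map $\beta\co\df{}{(Y_{0}/X_{0},\ms{M}_{0}/M_{0})}\ra\df{}{Y_{0}/X_{0}}$ is smooth. A smooth morphism onto a functor with a versal element lifts versality: concretely, if $(R,\xi)$ is versal for $\df{}{Y_{0}/X_{0}}$, choose $\tilde\xi\in\df{}{(Y_{0}/X_{0},\ms{M}_{0}/M_{0})}(R)$ lifting $\xi$ (possible since $\beta$ is surjective), and then smoothness of $\beta$ together with the relative tangent space being finite-dimensional (again by Proposition \ref{prop.extbc} applied to $\Ext^{1}_{Y_{0}}(\ms{M}_{0},\ms{M}_{0})$, which is finite over $k$ because $\ms{M}_{0}$ is locally free and $f_{0}$ is proper with one-dimensional fibres) shows that a versal element of the source is obtained from $(R,\xi)$ by a smooth (formally smooth, finite relative dimension) extension $R\ra R'$ with $R'$ still algebraic. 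One checks the smoothness-lifting property of $h_{R'}\ra\df{}{(Y_{0}/X_{0},\ms{M}_{0}/M_{0})}$ directly from Definition \ref{def.smooth} and the smoothness of $\beta$.

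The main obstacle I anticipate is the bookkeeping in the last step: verifying that the relative deformation functor of the module over a fixed (but varying) $Y/X$ is \emph{representable by a smooth algebraic extension} rather than merely pro-representable, i.e.\ controlling the algebraicity and finite-dimensionality of the relative obstruction space uniformly. This is where Proposition \ref{prop.extbc} does the real work --- it guarantees that $\Ext^{i}_{Y}(\ms{M},\ms{M})$ and the mixed $\Ext$-groups appearing in the deformation theory of the pair all commute with base change, so that the relative theory is unobstructed in the sense needed and the total functor inherits a hull from that of the base. Everything else is a routine application of the Schlessinger/Artin machinery.
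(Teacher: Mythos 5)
Your strategy is genuinely different from the paper's, and as written it has gaps at exactly the two points where the paper does something more concrete. First, the existence of a versal element for \(\df{}{Y_{0}/X_{0}}\) is not a routine consequence of Schlessinger's conditions plus Proposition \ref{prop.extbc}: since \(Y_{0}\) is not affine, after producing a formal hull you still have to algebraize it, i.e. prove effectivity of the formal deformation of the proper map (by lifting an \(f_{0}\)-ample invertible sheaf and invoking Grothendieck existence, or by Artin's algebraization as in \cite{art:74b}, cf.\ Remark \ref{rem.A}) and then apply Artin approximation to land in \(\cat{H}_{k}\). Proposition \ref{prop.extbc} only gives base change for \(\Ext\)-groups; it does not supply this algebraization, which is the real content your ``standard machinery'' step would have to import. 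Second, your final step lifts versality \emph{backwards} along the smooth map \(\beta\), from the target \(\df{}{Y_{0}/X_{0}}\) to the source \(\df{}{(Y_{0}/X_{0},\ms{M}_{0}/M_{0})}\). Smoothness of \(\beta\) only says that relative liftings exist over closed embeddings; to obtain a versal element of the source you must construct a relatively complete family of module deformations over a formally smooth extension \(R\ra R'\) of the versal base, i.e. build a hull for the fibre functor of deformations of the locally free sheaf on the non-affine family \(Y\ra X\ra R\), again including effectivity (Grothendieck existence for the formal sheaf) and approximation, before the versality induction can be run. That is not something one ``checks directly from Definition \ref{def.smooth}''; note also that the formal direction is the opposite one: smoothness of \(\beta\) pushes versality from the source to the target, which is precisely how the paper uses it.

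For comparison, the paper's proof avoids abstract existence theorems for the non-affine functors altogether. It quotes versality of the affine pair functor \(\df{}{(X_{0},M_{0})}\) from \cite[10.2]{ile:11x}, blows up the versal pair \((X,M)\) (Proposition \ref{prop.blowup}, Lemma \ref{lem.U}, Corollary \ref{cor.blowup}), restricts to the universal flattening stratum \(\bar{R}\sbeq R\) from \cite{SP}, and verifies that the resulting \((\bar{Y}/\bar{X},\bar{\ms{M}}/\bar{M})\) is versal using the injectivity of \(\alpha\) from Theorem \ref{thm.square} (i.e.\ Proposition \ref{prop.blowingup}); versality for \(\df{}{Y_{0}/X_{0}}\) then falls out by composing with the smooth \(\beta\). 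Besides being shorter, this identifies the versal base of the two resolution functors concretely as a closed subscheme of the versal base of \(\df{}{(X_{0},M_{0})}\), information your route would not produce. If you want to salvage your approach, the honest fix is to cite Artin's results (as in Remark \ref{rem.A}) for \(\df{}{Y_{0}/X_{0}}\) and to replace the backwards lifting along \(\beta\) by an actual hull construction for the relative module functor; at that point the paper's argument is both easier and stronger.
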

\begin{proof}
By \cite[10.2]{ile:11x} the functor \(\df{}{(X_{0},M_{0})}\) has a versal element, say \((X,M)\in \df{}{(X_{0},M_{0})}(R)\). Let \(f\co Y=\Bl_{M}(X)\ra X\) denote the blowing-up. By Proposition \ref{prop.blowup}, Lemma \ref{lem.U} and Corollary \ref{cor.blowup} the closed fibre equals \(f_{0}\).
By choosing a finite type representative, \cite[\href{https://stacks.math.columbia.edu/tag/05PI}{Lemma 05PI}]{SP} gives a flattening subscheme \(\bar{R}\sbeq R\) for \(Y\ra R\). Let \((\bar{X},\bar{M})\) denote the induced image in \(\df{}{(X_{0},M_{0})}(\bar{R})\) and \(\bar{f}\co \bar{Y}\ra \bar{X}\) the pullback of \(f\). Put \(\ms{M}=f^{\tri}M\). There is a natural map \(\bar{f}^{*}\bar{M}\ra \ms{M}_{|\bar{Y}}=:\bar{\ms{M}}\) and \((\bar{Y}/\bar{X},\bar{\ms{M}}/\bar{M})\) is an element in \(\df{}{(Y_{0}/X_{0},\ms{M}_{0}/M_{0})}(\bar{R})\). 

To test for versality of \((\bar{Y}/\bar{X},\bar{\ms{M}}/\bar{M})\) apply versality of \((X,M)\) and the universality of \(\bar{Y}\ra \bar{R}\). Versality follows since the forgetful map \(\alpha\) in Theorem \ref{thm.square} is injective. 

Moreover, since \(\beta\) in Theorem \ref{thm.square} is smooth, \(\bar{f}\co\bar{Y}\ra \bar{X}\) is a versal element in \(\df{}{Y_{0}/X_{0}}(\bar{V})\). 
\end{proof}
\begin{cor}[Lipman {\cite{lip:79}}]\label{cor.Lip}
Let \(X_{0}\) be a rational surface singularity and let \(f_{0}\co \tilde{X}_{0}^{\tn{c}}\ra X_{0}\) denote the RDP-resolution\textup{.} Then the forgetful map \(\df{}{\tilde{X}_{0}^{\tn{c}}/X_{0}}\ra \df{}{X_{0}}\) is injective\textup{.}
\end{cor}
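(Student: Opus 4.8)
The plan is to identify $f_{0}$ with the blowing-up of $X_{0}$ in a rank-one reflexive module and then read the statement off from Theorem \ref{thm.square}. By Example \ref{ex.omega} the RDP-resolution is $f_{0}\co\tilde{X}_{0}^{\tn{c}}=\Bl_{\omega_{X_{0}}}(X_{0})\ra X_{0}$, so Theorem \ref{thm.square} applies with the reflexive module $M_{0}=\omega_{X_{0}}$ and $\ms{M}_{0}=f_{0}^{\tri}\omega_{X_{0}}$. Since $\rk\omega_{X_{0}}=1$, part (iii) of that theorem gives that $\beta\co\df{}{(\tilde{X}_{0}^{\tn{c}}/X_{0},\ms{M}_{0}/\omega_{X_{0}})}\ra\df{}{\tilde{X}_{0}^{\tn{c}}/X_{0}}$ is an isomorphism, and part (i) gives that $\alpha\co\df{}{(\tilde{X}_{0}^{\tn{c}}/X_{0},\ms{M}_{0}/\omega_{X_{0}})}\ra\df{}{(X_{0},\omega_{X_{0}})}$ is injective. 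Commutativity of the square in Theorem \ref{thm.square} shows that the forgetful map $\df{}{\tilde{X}_{0}^{\tn{c}}/X_{0}}\ra\df{}{X_{0}}$ factors as $\df{}{\tilde{X}_{0}^{\tn{c}}/X_{0}}\xra{\beta^{-1}}\df{}{(\tilde{X}_{0}^{\tn{c}}/X_{0},\ms{M}_{0}/\omega_{X_{0}})}\xra{\alpha}\df{}{(X_{0},\omega_{X_{0}})}\ra\df{}{X_{0}}$, the last arrow forgetting the module. As $\beta^{-1}$ is an isomorphism and $\alpha$ is injective, it suffices to prove that $\df{}{(X_{0},\omega_{X_{0}})}\ra\df{}{X_{0}}$ is injective.

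For this I would show that the canonical module deforms uniquely along any deformation of $X_{0}$. Let $X\ra S$ be a deformation of $X_{0}$ with $S\in\cat{H}_{k}$. Rational surface singularities are Cohen-Macaulay, so $X\ra S$ is a Cohen-Macaulay morphism with two-dimensional fibres; hence the relative canonical sheaf $\omega_{X/S}$ is $S$-flat and its formation commutes with base change (cf.\ \cite{con:00}), so $(X,\omega_{X/S})$ is a deformation of $(X_{0},\omega_{X_{0}})$ lying over $X$. Now let $(X,M)$ be any element of $\df{}{(X_{0},\omega_{X_{0}})}(S)$ over the same $X$. Since $\omega_{X_{0}}$ is the dualising module of the Cohen-Macaulay ring $\Q_{X_{0},\fr{m}}$ and is maximal Cohen-Macaulay, one has $\sxt{i}{X_{0}}{\omega_{X_{0}}}{\omega_{X_{0}}}=0$ for $i>0$, while $\snd{}{X_{0}}{\omega_{X_{0}}}\cong\Q_{X_{0}}$ because $\omega_{X_{0}}$ is reflexive of rank one on the normal domain $X_{0}$. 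These vanishings force $\shm{}{X}{M}{\omega_{X/S}}$ to be $S$-flat with central fibre $\snd{}{X_{0}}{\omega_{X_{0}}}\cong\Q_{X_{0}}$, i.e.\ it is a module-deformation of $\Q_{X_{0}}$ over $X$. But module-deformations of the structure sheaf are trivial: a lift of a generator of $\Q_{X_{0}}$ restricting to $\id_{\omega_{X_{0}}}$ gives a surjection $\Q_{X}\thr\shm{}{X}{M}{\omega_{X/S}}$ which is an isomorphism by $S$-flatness and Nakayama. The corresponding homomorphism $M\ra\omega_{X/S}$ reduces to $\id_{\omega_{X_{0}}}$ on $X_{0}$, hence is an isomorphism of $\Q_{X}$-modules; thus $M\cong\omega_{X/S}$ as deformations of the pair. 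Two elements of $\df{}{(X_{0},\omega_{X_{0}})}(S)$ mapping to a common $X$ are therefore isomorphic, which is the required injectivity, and the corollary follows by composing injections.

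The only point carrying real content beyond bookkeeping with Theorem \ref{thm.square} is this uniqueness of the canonical-module deformation — concretely, the base-change behaviour of $\shm{}{X}{M}{\omega_{X/S}}$ forced by the $\ShExt$-vanishing, which I expect to be the main (if mild) obstacle; the identification with a rank-one blowing-up and the diagram chase are immediate from results already in place. It is also worth checking that the isomorphism $\beta$ is genuinely compatible with the forgetful maps down to $\df{}{X_{0}}$, but this is precisely the commutativity asserted in Theorem \ref{thm.square}.
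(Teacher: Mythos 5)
Your proposal is correct and follows essentially the same route as the paper: identify \(\tilde{X}_{0}^{\tn{c}}\cong\Bl_{\omega_{X_{0}}}(X_{0})\) via Example \ref{ex.omega}, factor through the square of Theorem \ref{thm.square} using that \(\beta\) is an isomorphism in the rank-one case and \(\alpha\) is injective, and reduce to the uniqueness of the canonical-module deformation. The paper phrases this last step as the map \(X/S\mapsto (X/S,\omega_{X/S})\) being an isomorphism \(\df{}{X_{0}}\ra\df{}{(X_{0},\omega_{X_{0}})}\), proved by exactly the base-change/Ext-vanishing mechanism (Proposition \ref{prop.extbc}, as in Lemma \ref{lem.inj}) that you spell out.
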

\begin{proof}
Note that \(\tilde{X}_{0}^{\tn{c}}\cong \Bl_{\omega_{X_{0}}}(X_{0})\); see Example \ref{ex.omega}. For a deformation \(X/S\) let \(\omega_{X/S}\) denote (the henselisation of) the dualising module. It is \(S\)-flat with canonical modules in the fibres; cf. \cite[Section 3.5]{con:00}. The map \(\df{}{X_{0}}\ra \df{}{(X_{0},\omega_{X_{0}})}\) defined by \(X/S\mapsto (X/S,\omega_{X/S})\) is an isomorphism (use Corollary \ref{cor.extbc} as in the proof of Lemma \ref{lem.inj}). By Theorem \ref{thm.square} the result follows.
\end{proof}
\begin{rem}\label{rem.A}
Let \(X/S\) be the minimal versal element in \(\df{}{X_{0}}\). Consider the functor \(\Res_{X/S}\) of local henselian schemes over \(S\) where \(\Res_{X/S}(S'/S)\) is the set of (isomorphism classes of) proper maps \(Y\ra X_{S'}\) such that \(Y\) is \(S'\)-flat and the closed fibre \(Y_{0}\ra X_{0}\) is the minimal resolution. There is a choice of finite type representative \(X^{\tn{ft}}\hspace{-0.1em}/S^{\tn{ft}}\hspace{-0.1em}\) of \(X/S\) with finite singular locus over \(S^{\tn{ft}}\hspace{-0.1em}\) such that \(\Res_{X/S}\) is represented by the henselisation in \(X_{0}^{\tn{ft}}\hspace{-0.1em}/\Spec k\)  of the algebraic space \(\Res_{X^{\tn{ft}}\hspace{-0.1em}/S^{\tn{ft}}}\) defined by Artin in \cite{art:74b}. Let \(e\co R\ra S\) be the minimal versal base for \(\Res_{X/S}\). Then \(e\) is a finite map from \(R\) onto the Artin component \(A\) in \(S\); \cite[Thm. 3]{art:74b}. This generalises Brieskorn's (analytic) result for RDPs (then \(A=S\)). Brieskorn's use of simple Lie algebras also gave the covering with the corresponding Weyl group as Galois group; \cite{bri:70}. Wahl in \cite[Thm. 1]{wah:79} showed that if \(W_{i}\) is the Weyl group corresponding to the \(i\)-th RDP on the RDP-resolution \(\tilde{X}_{0}^{\tn{c}}\) of \(X_{0}\) then \(R\ra A\) is Galois with \(\prod W_{i}\) as group. The cruical new ingredient was that \(\df{}{\tilde{X}_{0}^{\hspace{-0.06em}\tn{c}}\hspace{-0.06em}/X_{0}}\ra \df{}{X_{0}}\) is injective. This was proved by Lipman in \cite{lip:79} (with a formulation as in Corollary \ref{cor.Lip}). 

The functor \(\Res_{X/S}\) is related to our \(\df{}{\tilde{X}_{0}/X_{0}}\) as follows. Let \((f\co Y\ra X_{R})\) be a minimal versal element in \(\Res_{X/S}(R/S)\). Then \(f\) is proper with the minimal resolution as closed fibre. Since \(\mr{R}^{2}\hspace{-0.1em}(f_0)_{*}(-)=0\) and \(\mr{R}^{1}\hspace{-0.1em}(f_0)_{*}\Q_{Y_0}=0\), Corollary \ref{cor.extbc} and Nakayama's lemma implies that \(\mr{R}^{1}\hspace{-0.18em}f_{*}\Q_{Y}=0\). Hence \(f\) gives a versal element in \(\df{}{\tilde{X}_{0}/X_{0}}(R)\) by the proof of \cite[3.3]{art:74b} (without restricting to artin rings) in all characteristics. By \cite[4.6]{art:74b} it is minimal versal if \(X_{0}\) is equivariant (e.g. if \(\Char k=0\)); cf. \cite{wah:75}.
\end{rem}
\begin{rem}
The commutative diagram in Theorem \ref{thm.square} implies that there is a commutative diagram 
\begin{equation}\label{eq.mainsq2}
\xymatrix@C+6pt@R-6pt@H-6pt{
\hspace{-0.0em}R(Y_{0},\ms{M}_{0}) \ar[r]^(0.54){b}\ar@<-0.0em>[d]_(0.44){a} & R{(Y_{0})}\hspace{-0.1em}\ar@<0.12em>[d]^(0.42){d}
\\
\hspace{-0.4em}R{(X_{0},M_{0})} \ar[r]^(0.52){c} & R{(X_{0})}\hspace{-0.18em}
}
\end{equation}
of corresponding minimal versal base spaces.  
It may be interesting to study the components of \(R(X_{0},M_{0})\). For instance, the components in \(R(Y_{0},\ms{M}_{0})\) are components in \(R(X_{0},M_{0})\) as will be shown elsewhere. One can also study the components of \(R(X_{0})\) in terms of the deformation theory of pairs for various \(M_{0}\). Since \(b\) is smooth the components of \(R(Y_{0},\ms{M}_{0})\) correspond to the components of \(R(Y_{0})\) and hence (e.g. by the discussion in Remark \ref{rem.A} and similar results for partial resolutions) to components of \(R(X_{0})\).  
Note that by Theorem \ref{thm.MW} any partial resolution dominated by the minimal resolution is obtained for some \(M_{0}\). Since \(a\) always is an embedding, Brieskorn's covering phenomena will reemerge for a restriction of the map \(c\).
\end{rem}
\begin{ex}\label{ex.reg}
Let \(\BB{A}^{2}\) be the henselisation of \(\BB{A}^{2}_{\BB{C}}\) at the origin and \(q\co \BB{A}^{2}\ra X=\BB{A}^{2}/G\) the quotient map for a finite subgroup \(G\) of \(\Sl_{2}(\BB{C})\) so that in particular \(X\) is an RDP\textup{.} Put \(M^{\tn{reg}}=q_{*}\Q_{\hspace{-0.16em}\BB{A}^{2}}\). Then \(M^{\tn{reg}}\) corresponds to the regular \(G\)-representation (i.e.\ \(M^{\tn{reg}}\cong(q_{*}\Q_{\hspace{-0.16em}\BB{A}^{2}}[G])^{G}\) where \(G\) acts on the coefficients as well). Since all indecomposable reflexive \(\Q_{X}\)-modules are direct summands of \(M^{\tn{reg}}\), the blowing-up of \(X\) in \(M^{\tn{reg}}\) is the minimal resolution \(\pi\co \tilde{X}\ra X\) by Theorem \ref{thm.MW}\textup{.} Put \(\ms{M}^{\tn{reg}}=\pi^{\tri}M^{\tn{reg}}\)\textup{.} 
By Theorem \ref{thm.square} the map \(b\co R(\tilde{X},\ms{M}^{\tn{reg}})\ra R(\tilde{X})\) is an isomorphism, and hence \(ab^{-1}\co R(\tilde{X})\ra R(X,M^{\tn{reg}})\) is a closed immersion\textup{.} It will be shown elsewhere that the image is an irreducible component. Thus \(R(X,M^{\tn{reg}})\) has a distinguished component such that the restriction of the forgetful map \(c\co R(X,M^{\tn{reg}})\ra R(X)\) is a Galois covering (from Briskorn's result) with covering group the Weyl group with Coxeter-Dynkin diagram equal to the dual graph of the exceptional divisor in the minimal resolution.
\end{ex}
\begin{ex}[The fundamental module]\label{ex.fund}
Let \(X_{0}\) be a normal surface singularity, \(\omega_{X_{0}}\) the canonical module, and \(\fr{m}_{0}\) the maximal \(\Q_{X_{0}}\)-ideal. There are natural isomorphisms \(\xt{1}{X_{0}}{\fr{m}_{0}}{\omega_{0}}\cong \xt{2}{X_{0}}{\Q_{X_0}/\fr{m}_{0}}{\omega_{0}}\cong k\) by local duality theory. Choose a short exact sequence
\begin{equation}\label{eq.fund}
0\ra \omega_{X_{0}}\lra F_{0}\lra \fr{m}_{0}\ra 0
\end{equation}
which represents \(1\in k\). 
It follows that \(F_{0}\) is reflexive of rank \(2\); cf. \cite[5.7]{ile:12a}. Let \(f_{0}\co Y_{0}\ra X_{0}\) denote the blowing-up in \(F_{0}\). Assume \(X_{0}\) is an RDP and \(\Char(k)=0\). We claim that the minimal versal base scheme \(R(X_{0},F_{0})\) consists of two irreducible components; \(R^{0}\) and \(R^{E}\), informally defined as:
\begin{enumerate}[leftmargin=3em] 
\item[(\(R^{0}\))] Deformations of \(X_{0}\) with a section
\item[(\(R^{E}\))] Deformations of the pair \((X_{0},F_{0})\) which give a flat blowing-up 
\end{enumerate}
For the \(\mr{A}_{n}\), \(\mr{D}_{n}\) and \(\mr{E}_{n}\) the dimensions are \(\dim R^{0}=n+2\) and \(\dim R^{E}=n\).
More specifically: Note that $\xt{j}{X_{0}}{\fr{m}_{0}}{\omega_{X_{0}}}\cong \xt{j+1}{X_{0}}{\Q_{X_0}/\fr{m}_{0}}{\omega_{X_{0}}}$ is $0$ for $j\neq 1$ by local duality theory. For \((X,I)\in\df{}{(X_{0},\fr{m}_{0})}(S)\) the base change map gives a deformation of modules \(\xt{1}{X}{I}{\omega_{X/S}}\ra \xt{1}{X_{0}}{\fr{m}_{0}}{\omega_{X_{0}}}\) by Corollary \ref{cor.extbc}. Lifting the extension \eqref{eq.fund} along this map gives an \(S\)-flat \(\Q_{X}\)-module \(F\) specialising to \(F_{0}\); cf. \cite[3.1]{ish:00}. One obtains a smooth map \(\df{}{(X_{0},\fr{m}_{0})}\ra \df{}{(X_{0},F_{0})}\); see \cite[9.11]{ile:11x}. Let  \(x_{0}\) denote the closed point in \(X_{0}\). There is a functor \(\df{}{X_{0}\ni x_{0}}\) of deformations \(X\ra S\) of \(X_{0}\) with a section \(X\la S\). The kernel of the surjection \(\Q_{X}\ra \Q_{S}\) gives an element in \(\df{}{(X_{0},\fr{m}_{0})}\) and hence a map \(\df{}{X_{0}\ni x_{0}}\ra \df{}{(X_{0},\fr{m}_{0})}\). If \(X\ra R(X_{0})\) denotes 
the minimal versal family of \(\df{}{X_{0}}\) then the base change \(X^{2}\ra X\) of \(X\ra R(X_{0})\) to \(X\) with the diagonal as section is a minimal versal family for \(\df{}{X_{0}\ni x_{0}}\); \cite[6.7]{ile:14}. Then \(R^{0}\) is defined as the image of \(X\) under the composition \(\df{}{X_{0}\ni x_{0}}\ra \df{}{(X_{0},F_{0})}\). Moreover, \(R^{E}\) is defined as the image of \(ab^{-1}\co R(Y_{0})\ra R(X_{0},M_{0})\).
A proof of the claim will be published elsewhere.
\end{ex}
\section{An application to flops}\label{sec.flops}
We apply our results to describe flops contracting to cDV-points. The results generalise the conjectures stated by Curto and Morrison in \cite{cur/mor:13}.

Let \(X_{0}\) denote an RDP and assume \(\Char(k)\neq 2\). Then \(X_{0}\) is a hypersurface singularity defined by a polynomial of the form \(F=z^{2}+d(x,y)\) by \cite{art:77}. There is a non-trivial involution \(\sigma_{0}\co X_{0}\ra X_{0}\) defined by \(z\mapsto -z\).
\begin{lem}\label{lem.syz}
Suppose \(M_{0}\) is a reflexive \(\Q_{X_{0}}\)-module without free summands and let \(M_{0}^{+}\) denote the syzygy module of \(M_{0}\)\textup{.}
Let \(f_{0}\co Y_{0}\ra X_{0}\) and \(f_{0}^{+}\co Y_{0}^{+}\ra X_{0}\) be the blowing-up of \(X_{0}\) in \(M_{0}\) and \(M_{0}^{+}\)\textup{,} respectively\textup{.}
\begin{enumerate}[leftmargin=2.4em, label=\textup{(\roman*)}]
\item Taking the syzygy gives a well defined map \(\delta\co \df{}{(X_{0},M_{0})}\ra \df{}{(X_{0},M^{+}_{0})}\) which is an isomorphism\textup{.}
\item There is a unique isomorphism \(\theta_{0}\co Y_{0}\ra Y_{0}^{+}\) with \(f_{0}^{+}\theta_{0}=\sigma_{0}f_{0}\)\textup{.} Moreover\textup{,} for any deformation \(f\co Y\ra X\) in \(\df{}{Y_{0}/X_{0}}(S)\) with image \((X,M)\) in \(\df{}{(X_{0},M_{0})}(S)\), there is an involution \(\sigma\) of \(X\) extending \(\sigma_{0}\) such that the blowing-up \(f^{+}\co Y^{+}\ra X\) of \(X\) in the syzygy \(M^{+}\) is isomorphic to \(\sigma f\) by a unique isomorphism \(\theta\co Y\ra Y^{+}\) which extends \(\theta_{0}\)\textup{.}
\item The composition of \(\alpha\beta^{-1}\) for \(f_{0}\) in \textup{Theorem \ref{thm.square}} with \(\delta\) and the inverse of \(\alpha\beta^{-1}\) for \(f_{0}^{+}\) is a well defined isomorphism 
\begin{equation*}
+\co \df{}{Y_{0}/X_{0}}\xra{\hspace{1em}\simeq\hspace{1em}} \df{}{Y^{+}_{0}/X_{0}}
\end{equation*}
which is independent of \(M_{0}\) within the class of reflexive \(\Q_{X_{0}}\)-modules with \(f_{0}\) as blowing-up\textup{;} cf. \textup{Theorem \ref{thm.MW} (ii)}\textup{.}
\end{enumerate}
\end{lem}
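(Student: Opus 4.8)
The plan is to recognise all three maps being composed as isomorphisms onto the ``flat blow-up'' subfunctors supplied by the Wunram case of Theorem \ref{thm.square}, then to see that $\delta$ matches those subfunctors up, and finally to dispose of independence from $M_{0}$ by a bookkeeping argument based on part (ii).

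\emph{Setup.} Since $X_{0}$ is an RDP every reflexive $\Q_{X_{0}}$-module is Wunram, so Theorem \ref{thm.square}(i),(iii) applies both to $f_{0}\co Y_{0}=\Bl_{M_{0}}(X_{0})\ra X_{0}$ and to $f_{0}^{+}\co Y_{0}^{+}=\Bl_{M_{0}^{+}}(X_{0})\ra X_{0}$: the maps $\alpha\beta^{-1}\co\df{}{Y_{0}/X_{0}}\ra\df{}{(X_{0},M_{0})}$ and $\alpha^{+}(\beta^{+})^{-1}\co\df{}{Y_{0}^{+}/X_{0}}\ra\df{}{(X_{0},M_{0}^{+})}$ are injective, and by Proposition \ref{prop.blowingup} (writing $\alpha=(\alpha\beta^{-1})\beta$ with $\beta$ surjective) their images are exactly $\df{\dprime}{(X_{0},M_{0})}$ and $\df{\dprime}{(X_{0},M_{0}^{+})}$, the inverses being the blow-up maps $\beta\gamma$ and $\beta^{+}\gamma^{+}$ of Proposition \ref{prop.blowingup}. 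By part (i), $\delta\co\df{}{(X_{0},M_{0})}\ra\df{}{(X_{0},M_{0}^{+})}$ is an isomorphism. Hence the composite $+:=(\beta^{+}\gamma^{+})\circ\delta\circ(\alpha\beta^{-1})$ — ``blow down in $M$, take the syzygy, blow up'' — is a well-defined isomorphism \emph{as soon as} $\delta$ carries $\df{\dprime}{(X_{0},M_{0})}$ onto $\df{\dprime}{(X_{0},M_{0}^{+})}$, with inverse $(\beta\gamma)\circ\delta^{-1}\circ(\alpha^{+}(\beta^{+})^{-1})$.

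\emph{$\delta$ preserves the subfunctors.} Let $(X,M)\in\df{\dprime}{(X_{0},M_{0})}(S)$; by Proposition \ref{prop.blowingup} it is the blow-down of $f\co Y=\Bl_{M}(X)\ra X$ in $\df{}{Y_{0}/X_{0}}(S)$, and $\delta(X,M)=(X,M^{+})$. By part (ii) there is an involution $\sigma$ of $X$ over $\sigma_{0}$ and a unique $\theta\co Y\ra Y^{+}:=\Bl_{M^{+}}(X)$ over $\theta_{0}$ with $f^{+}\theta=\sigma f$. As $\sigma$ is an automorphism of $X$, $f^{+}$ is proper, $Y^{+}$ is $S$-flat with closed fibre $f_{0}^{+}$, $\mr{R}^{1}f^{+}_{*}\Q_{Y^{+}}\cong\sigma_{*}\mr{R}^{1}f_{*}\Q_{Y}$, and by Corollary \ref{cor.blowup}(i) $\mr{R}^{1}f^{+}_{*}\big((f^{+})^{\tri}M^{+}\big)\cong\sigma_{*}\mr{R}^{1}f_{*}\big(f^{\tri}(\sigma^{*}M^{+})\big)$. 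On the closed fibre, part (ii) and Corollary \ref{cor.blowup}(ii) give $Y_{0}^{+}\cong\Bl_{\sigma_{0}^{*}M_{0}}(X_{0})$, so $\sigma_{0}^{*}M_{0}$ and $M_{0}^{+}$ have the same non-free indecomposable summands (Theorem \ref{thm.MW}(ii)), whence (applying $\sigma_{0}^{*}$) so do $\sigma_{0}^{*}M_{0}^{+}$ and $M_{0}$; therefore $f_{0}^{\tri}(\sigma_{0}^{*}M_{0}^{+})$ is locally free with $\mr{R}^{1}f_{0*}=0$ (Theorem \ref{thm.MW}(i), Proposition \ref{prop.lCM}, Example \ref{ex.fib}), and Proposition \ref{prop.extbc} then forces $\mr{R}^{1}f_{*}\big(f^{\tri}(\sigma^{*}M^{+})\big)=0$, and similarly $\mr{R}^{1}f_{*}\Q_{Y}=0$. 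Thus $\big(Y^{+}/X,(f^{+})^{\tri}M^{+}/M^{+}\big)$ is an object of $\cdf{}{(Y_{0}^{+}/X_{0},\ms{M}_{0}^{+}/M_{0}^{+})}(S)$ blowing down to $\delta(X,M)$, so $\delta(X,M)\in\df{\dprime}{(X_{0},M_{0}^{+})}$. The reverse inclusion is the same argument with $M_{0}^{+}$ in place of $M_{0}$ — legitimate, as $M_{0}^{+}$ is again reflexive without free summands and its syzygy is $M_{0}$ up to the canonical isomorphism $M_{0}^{++}\cong M_{0}$ from two-periodicity of matrix factorisations over the hypersurface $X_{0}$ — so, $\delta$ being an isomorphism, it restricts to a bijection $\df{\dprime}{(X_{0},M_{0})}\xra{\sim}\df{\dprime}{(X_{0},M_{0}^{+})}$, and $+$ is a well-defined isomorphism.

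\emph{Independence of $M_{0}$, and the obstacle.} If $M_{0}'$ is another reflexive module with $\Bl_{M_{0}'}(X_{0})=Y_{0}$ then by Theorem \ref{thm.MW}(ii) it has the same non-free indecomposable summands as $M_{0}$; since the minimal syzygy distributes over direct sums and permutes the non-free indecomposables, $M_{0}'^{+}$ has the same non-free indecomposable summands as $M_{0}^{+}$, so $\Bl_{M_{0}'^{+}}(X_{0})=Y_{0}^{+}$ over $X_{0}$ and the relevant functors agree, while the uniqueness in part (ii) forces the two isomorphisms $\theta_{0}\co Y_{0}\ra Y_{0}^{+}$ to coincide. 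For $f\in\df{}{Y_{0}/X_{0}}(S)$ with blow-down $X$, part (ii) identifies $+_{M_{0}}(f)$ with the class of $\sigma f$ and $+_{M_{0}'}(f)$ with the class of $\sigma' f$ in $\df{}{Y_{0}^{+}/X_{0}}(S)$, markings via the same $\theta_{0}$, for involutions $\sigma,\sigma'$ of $X$ over $\sigma_{0}$; but $\sigma'\sigma^{-1}$ is an automorphism of $X$ over $\id_{X_{0}}$ and $(\id_{Y},\sigma'\sigma^{-1})$ is an isomorphism $\sigma f\xra{\sim}\sigma' f$ over $\id_{S}$ compatible with the markings, so $+_{M_{0}}(f)=+_{M_{0}'}(f)$. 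If $M_{0}'$ has a free summand, deleting it changes neither $Y_{0}$ nor, by minimality of the syzygy, $Y_{0}^{+}$, and by Proposition \ref{prop.blowingup} not the composite $+$, so the general case reduces to the one just treated. I expect the genuine work to be concentrated in the middle step: verifying that the $\sigma$-twist from part (ii) preserves \emph{all} the defining conditions of $\cdf{}{(Y_{0}^{+}/X_{0},\ms{M}_{0}^{+}/M_{0}^{+})}$, above all the $S$-flatness (here, vanishing) of $\mr{R}^{1}f^{+}_{*}\big((f^{+})^{\tri}M^{+}\big)$, which is what forces one to combine the identification of $\sigma_{0}^{*}M_{0}^{+}$ up to summands (McKay--Wunram and Corollary \ref{cor.blowup}) with the base-change Proposition \ref{prop.extbc}; everything else is assembling Theorem \ref{thm.square}, Proposition \ref{prop.blowingup} and parts (i),(ii).
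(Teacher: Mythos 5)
Your proposal addresses only part (iii): parts (i) and (ii) are invoked as known facts (``By part (i) \(\delta\) is an isomorphism'', ``By part (ii) there is an involution \(\sigma\) \dots and a unique \(\theta\)''), but they are part of the statement and carry the substantive content, so as a proof of the lemma there is a genuine gap. Part (i) requires showing that the syzygy operation is well defined on deformations (lift a fixed minimal free cover \(\vare_{0}\co\Q_{X_{0}}^{\oplus n}\ra M_{0}\) to \(\vare\co\Q_{X}^{\oplus n}\ra M\), set \(M^{+}=\ker\vare\), and check independence of the lifting) and that it is an isomorphism, which uses two-periodicity \(\syz{2}{X_{0}}(M_{0})\cong M_{0}\) for MCM modules on the hypersurface \(X_{0}\) \cite{eis:80}. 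Part (ii) requires Kn{\"o}rrer's theorem \(\sigma_{0}^{*}M_{0}\cong M_{0}^{+}\) \cite[2.6 ii]{kno:87} together with the universal property of Proposition \ref{prop.blowup} to get \(\theta_{0}\), and --- crucially for the ``moreover'' clause --- the existence of an involution \(\sigma\) of an \emph{arbitrary} deformation \(X\) extending \(\sigma_{0}\); in the paper this comes from writing the versal deformation in the form \(z^{2}+D(x,y,t)\) (possible since \(\Char(k)\neq 2\)), so that \(\sigma_{0}\) extends trivially, after which \(\sigma^{*}M\cong M^{+}\) again by Kn{\"o}rrer and \(f^{+}\cong\sigma f\) by universality. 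None of this appears in your write-up, and it is exactly where the specific hypersurface structure of the RDP enters.

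Granting (i) and (ii), your treatment of (iii) follows the paper's route: the \(\sigma\)-twist shows \(Y^{+}\cong Y\) is \(S\)-flat with closed fibre \(f_{0}^{+}\), so \(\delta\) carries the image subfunctor of Proposition \ref{prop.blowingup} for \(M_{0}\) into the one for \(M_{0}^{+}\) and \(+\) is defined; invertibility comes from two-periodicity (the paper phrases this as \(+^{2}\simeq\id\)); and independence of \(M_{0}\) comes from using the same involution --- your observation that \((\id_{Y},\sigma'\sigma^{-1})\) identifies \(\sigma f\) with \(\sigma' f\) correctly handles a different choice of involution. Your verification of the conditions of Definition \ref{def.fib} is, however, more laborious than necessary because you avoid the isomorphism \(\sigma^{*}M\cong M^{+}\) that the proof of (ii) supplies: with it, Corollary \ref{cor.blowup} gives \(\theta^{*}\bigl((f^{+})^{\tri}M^{+}\bigr)\cong f^{\tri}(\sigma^{*}M^{+})\cong f^{\tri}M=\ms{M}\), and all flatness and \(\mr{R}^{1}\)-conditions are immediate from those for \((Y/X,\ms{M}/M)\), with no appeal to Theorem \ref{thm.MW} or Proposition \ref{prop.extbc} needed at this point.
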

\begin{proof}
(i) 
Fix a minimal free cover \(\vare_{0}\co \Q_{X_{0}}^{\oplus n}\ra M_{0}\) and define \(M_{0}^{+}\) as \(\ker \vare_{0}\). For a deformation \((X,M)\) of \((X_{0},M_{0})\) choose a lifting \(\vare\co \Q_{X}^{\oplus n}\ra M\) of \(\vare_{0}\) and define \(M^{+}\) as \(\ker \vare\). Then \((X,M^{+})\) is a deformation of \((X_{0},M_{0}^{+})\). Another choice of lifting of \(\vare_{0}\) gives an isomorphic deformation and \(\delta\) is well defined. Since \(X_{0}\) is a hypersurface singularity and \(M_{0}\) is MCM there is an isomorphism \(\syz{2}{X_{0}}(M_{0})\cong M_{0}\) (see \cite{eis:80}) which extends to any deformation \((X,M)\).

(ii) By \cite[2.6 (ii)]{kno:87} the pullback \(\sigma_{0}^{*}M_{0}\) is isomorphic to \(M_{0}^{+}\). It follows from Proposition \ref{prop.blowup} that \(f_{0}^{+}\) is uniquely isomorphic to \(\sigma_{0} f_{0}\). The tangent space of the (unobstructed) deformation functor \(\df{}{X_{0}}\) is given by \(\Q_{X_{0}}/(F_{x},F_{y},F_{z})\). Since \(\Char(k)\neq 2\), a versal deformation may be chosen of the form \(z^{2}+D(x,y,t)\) for some variables \(t=t_{1},\dots,t_{n}\) and hence \(X\) is isomorphic to a deformation of this form, too. Then \(\sigma_{0}\) extends trivially to an involution \(\sigma\) of \(X\). Again by \cite[2.6 (ii)]{kno:87}, \(\sigma^{*}M\cong M^{+}\). Then \(f^{+}\) is isomorphic to \(\sigma f\) by Proposition \ref{prop.blowup}.

(iii) In particular, \(Y^{+}\) is \(S\)-flat and so the map \(+\) is well defined, an isomorphism since \(+^{2}\simeq \id\), and independent of the module since we may use the same involution \(\sigma\). 
\end{proof}
\begin{defn}\label{defn.hyper}
For a singularity \(\Spec B\), we say that \(\Spec B/(u)\) is a \emph{good hyperplane section} if \(u\) is a non-zero-divisor contained in \(\fr{m}_{B}{\smallsetminus}\fr{m}^{2}_{B}\) such that \(\Spec B/(u)\) is an isolated singularity. With \(T=\Spec k[t]^{\tn{h}}\) the associated map \(\Spec B\ra T\) defined by \(t\mapsto u\) is called the \emph{hyperplane section map}.

If \(\dim\Spec B=3\) and \(\Spec B/(u)\) is an RDP for a generic choice of \(u\in \fr{m}_{B}{\smallsetminus}\fr{m}^{2}_{B}\), \(\Spec B\) is called a cDV; cf. \cite[5.32]{kol/mor:98}.
\end{defn}
Assume \(g\co W\ra Z\) is a small partial resolution of a normal singularity, \(K_{W}\) is numerically \(g\)-trivial and that \(D\) is a \(\BB{Q}\)-Cartier divisor on \(W\) such that \(-D\) is \(g\)-ample. Then a \(D\)-flop of \(g\) is a partial resolution \(g^{+}\co W^{+}\ra Z\) such that the strict transform \(D^{+}\) of \(D\) to \(W^{+}\) is \(g^{+}\)-ample; cf. \cite[6.10]{kol/mor:98} and \cite{kol:89}. If \(\varSigma(g)\) is irreducible, then \(g^{+}\) is called a simple flop of \(g\). If \(\dim Z=3\), the length of a simple flop is defined as the length at the generic point of \(E(g)\); see \cite[16.7]{cle/kol/mor:88}. In a flop, $W$ and $W^+$ typically share many properties, e.g. the number and type of singularities \cite[2.4]{kol:89}.

Assume \(\Char (k)=0\) for the rest of the article.
We will consider the case where \(Z\) is an isolated cDV which is equivalent to \(Z\) being Gorenstein and terminal; cf. \cite[5.38]{kol/mor:98}. Moreover, \(Z\) is rational by R.\ Elkik's \cite[Th{\'e}. 2]{elk:78}; cf. \cite[5.42]{kol/mor:98}. By a theorem of Reid any crepant partial resolution \(g\co W\ra Z\) is small, any good hyperplane section \(X\subset Z\) has a normal strict transform \(Y\subset W\) and the induced map \(f\co Y\ra X\) is a partial resolution of an RDP dominated by the minimal resolution, see \cite[1.14]{rei:83}. This allows us to apply Theorem \ref{thm.square}. We show that \(g\) and its flop \(g^{+}\) is given as a blowing-up in an MCM module and in its syzygy module. In addition to existence the construction gives the flops independence of the divisor \(D\).
\begin{thm}\label{thm.flop}
Suppose \(g\co W\ra Z\) is a small partial resolution of an isolated cDV singularity\textup{.} Let \(D\) be a Cartier divisor on \(W\) such that \(-D\) is \(g\)-ample\textup{.}  
Then\textup{:}
\begin{enumerate}
[leftmargin=2.4em, label=\textup{(\roman*)}]
\item There is a maximal Cohen-Macaulay \(\Q_{Z}\)-module \(M\) such that \(\bigwedge^{\rk M}g^{\tri}M\cong \Q_{W}(-D)\) and \(g\) is isomorphic to the blowing-up \(\Bl_{M}(Z)\ra Z\)\textup{.}
\item Let \(M^{+}\) denote the syzygy module of \(M\)\textup{.} Then
\begin{equation*}
\Bl_{M}(Z)\xra{\hspace{1.3em}} Z \xla{\hspace{0.5em}g^{+}\hspace{0.3em}} \Bl_{M^{+}}(Z)=W^{+}
\end{equation*}
gives the unique \(D\)-flop of \(g\) and 
\(\bigwedge^{\rk M^{+}}\hspace{-0.2em}(g^{+})^{\tri}M^{+}\cong \Q_{W^{+}}(D^{+})\) 
where \(D^{+}\) is the strict transform of \(D\) to \(W^{+}\)\textup{.} 
\item Given \(g\)\textup{,} the \(D\)-flop is independent of the Cartier divisor \(D\)\textup{.}
\item If the flop is simple, \(M\) can be chosen to be indecomposable and then the length of the flop equals \(\rk M\)\textup{.}
\end{enumerate}
\end{thm}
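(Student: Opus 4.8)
The plan is to reduce everything to the two-dimensional situation through a good hyperplane section, where Theorems \ref{thm.MW} and \ref{thm.square} and Lemma \ref{lem.syz} do the work, and then to lift back up. First I would choose a good hyperplane section $X\subseteq Z$: by Reid \cite{rei:83} its strict transform $f\co Y\ra X$ is a partial resolution of an RDP dominated by the minimal resolution, $\varSigma(g)\subseteq Y$ (so $E(f)_{\tn{red}}=\varSigma(g)$ and, by Proposition \ref{prop.pic}(iv), $\Pic(W)$ is identified with $\Pic(Y)$ compatibly with intersection numbers against exceptional curves), and one may arrange $Y=g^{-1}(X)$ scheme-theoretically. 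Via the hyperplane-section map $Z\ra T=\Spec k[t]^{\tn{h}}$ and Proposition \ref{prop.extbc} (using $\mr{R}^{1}f_{*}\Q_{Y}=0$, whence $\mr{R}^{1}g_{*}\Q_{W}=0$), the map $g$ is an element of $\df{}{Y/X}(T)$.

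Next I would produce the module. The invertible sheaf $\Q_{W}(-D)|_{Y}$ is $f$-ample, so by Theorem \ref{thm.MW}(ii)--(iii) there is a reflexive $\Q_{X}$-module $M_{0}$, which I take Wunram and without free summands, with $Y\cong\Bl_{M_{0}}(X)$ and $\ch_{1}(f^{\tri}M_{0})\cong\Q_{Y}(-D|_{Y})$. Since $M_{0}$ is Wunram, $\beta$ in Theorem \ref{thm.square} is an isomorphism, so $g\in\df{}{Y/X}(T)$ lifts uniquely to an object $(g\co W\ra Z,\ms{M}/M)$ of $\df{}{(Y/X,\ms{M}_{0}/M_{0})}(T)$ with $\ms{M}_{0}=f^{\tri}M_{0}$; here $M$ is a $\Q_{Z}$-module, flat over $T$ with MCM fibre $M_{0}$ and hence MCM, and $\alpha$ sends this object to $(Z,M)$. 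By Proposition \ref{prop.blowingup} (the identity $\gamma\alpha=\id$) the family $W$ over $Z$ is identified with $\Bl_{M}(Z)$, and restricting $\bigwedge^{\rk M}\! g^{\tri}M$ to $Y$ gives, by Corollary \ref{cor.blowup}(i), $\ch_{1}(f^{\tri}M_{0})\cong\Q_{W}(-D)|_{Y}$, so $\bigwedge^{\rk M}\! g^{\tri}M\cong\Q_{W}(-D)$ because $\Pic(W)\hra\Pic(Y)$. This proves (i).

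For the flop, write $X=\{z^{2}+d(x,y)=0\}$ and $\sigma_{0}\co z\mapsto -z$. Applying Lemma \ref{lem.syz}(ii) to $g$, with image $(Z,M)$, gives an involution $\sigma$ of $Z$ extending $\sigma_{0}$ and a unique isomorphism $\theta\co W\ra W^{+}:=\Bl_{M^{+}}(Z)$ over $\sigma$ with $g^{+}\theta=\sigma g$, where $g^{+}\co W^{+}\ra Z$ is the blowing-up in the syzygy $M^{+}$; thus $g^{+}$ is small, $\varSigma(g^{+})=\theta(\varSigma(g))\subseteq Y^{+}:=\theta(Y)$, and $Y^{+}$ is the strict transform of $X$ along $g^{+}$ (since $\sigma(X)=X$). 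Two things then remain. \emph{First}, $\ms{L}^{+}:=\bigwedge^{\rk M^{+}}(g^{+})^{\tri}M^{+}$ is invertible (as $W^{+}=\Bl_{M^{+}}(Z)$) and $g^{+}$-ample: by Corollary \ref{cor.blowup}(i) it restricts on $Y^{+}$ to $\ch_{1}((f^{+})^{\tri}M_{0}^{+})$ (here $M^{+}\ot\Q_{X}\cong M_{0}^{+}$, since $t$ is a nonzerodivisor on $M$), and as every reflexive module on an RDP is Wunram, Theorem \ref{thm.MW}(i) together with Proposition \ref{prop.pic}(iic) makes this ample on $Y^{+}$; since $\varSigma(g^{+})\subseteq Y^{+}$, Proposition \ref{prop.pic}(iv) upgrades this to $g^{+}$-ampleness. \emph{Second}, $\ms{L}^{+}\cong\Q_{W^{+}}(D^{+})$, where $D^{+}$ is the proper transform of $D$ under the codimension-one isomorphism $W\dra W^{+}$ over $Z$: over the smooth locus $U_{Z}$ of $Z$ the syzygy sequence $0\ra M^{+}\ra\Q_{Z}^{\oplus n}\ra M\ra 0$ is exact of bundles, so $\det(M^{+})\ot\det(M)$ is trivial there, and pulling back along the isomorphisms $g|_{g^{-1}(U_{Z})}$ and $g^{+}|_{(g^{+})^{-1}(U_{Z})}$ and using $\ch_{1}(g^{\tri}M)\cong\Q_{W}(-D)$ from (i) shows $\ms{L}^{+}$ and $\Q_{W^{+}}(D^{+})$ agree on $(g^{+})^{-1}(U_{Z})$; since its complement $\varSigma(g^{+})$ has codimension $\geq 2$ and both sheaves are reflexive of rank one (the former a line bundle), they agree on $W^{+}$, so $D^{+}$ is Cartier and equals $\ms{L}^{+}$. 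Hence $D^{+}$ is $g^{+}$-ample and $g^{+}$ is the unique $D$-flop of $g$ (\cite{kol:89}, \cite[6.10]{kol/mor:98}): this is (ii). For (iii), a second anti-$g$-ample Cartier divisor $D'$ yields by (i) a module $M''$ with the same non-free indecomposable summands on $Y$ as $M$ (both blow up to $Y$, Theorem \ref{thm.MW}(ii)), and the isomorphism $+$ of Lemma \ref{lem.syz}(iii) is independent of the module within this class, so $\Bl_{(M'')^{+}}(Z)\cong W^{+}$ over $Z$.

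Finally, for (iv): when the flop is simple, $\varSigma(g)=E(f)_{\tn{red}}$ is a single $\BB{P}^{1}$, so the non-free part of $M_{0}$ is one indecomposable summand, an $M_{i}$ by Theorem \ref{thm.MW}(iv); taking $M_{0}=M_{i}$, the deformation $M$ furnished by $\beta^{-1}$ is indecomposable because $\End_{\Q_{Z}}(M)$ specialises to the local ring $\End_{\Q_{X}}(M_{i})$ (Proposition \ref{prop.extbc}), $g$ is recovered as $\Bl_{M}(Z)$, and by (iii) its flop is $\Bl_{M^{+}}(Z)$. Moreover $\rk M=\rk M_{i}$, which by Theorem \ref{thm.MW}(iv) is the generic multiplicity of the unreduced fibre $E(f)$ in $Y$; as $Y=g^{-1}(X)$ one has $E(f)=E(g)$ as schemes, so this multiplicity is the length of the flop in the sense of \cite[16.7]{cle/kol/mor:88}, giving $\rk M=$ length. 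I expect the main obstacle to be the combination of the deformation-theoretic bookkeeping in the second paragraph (checking that $g$ genuinely lies in $\df{}{Y/X}(T)$ and that passing through $\beta^{-1}$, $\alpha$ and $\gamma$ really returns $g$) with the reduction of \emph{relative} ampleness of $\ms{L}^{+}$ over the three-dimensional base $Z$ to positivity on the exceptional curves of the surface $Y^{+}$ via Proposition \ref{prop.pic}(iv); the identification of $\ms{L}^{+}$ with the flopped divisor, by contrast, is clean once one notices that strict transforms along $g$ and $g^{+}$ agree over the smooth locus of $Z$.
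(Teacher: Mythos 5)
Your proposal is correct, and for (i), (iii), (iv) and the structural half of (ii) it runs exactly as the paper does: good hyperplane section, $g$ viewed in $\df{}{Y/X}(T)$, Theorem \ref{thm.MW} to produce $M_{0}$ with $\ch_{1}(f^{\tri}M_{0})\cong \Q_{W}(-D)|_{Y}$, Theorem \ref{thm.square} and Proposition \ref{prop.blowingup} to get $(Z,M)$ with $g\cong \Bl_{M}(Z)$, Lemma \ref{lem.syz} for the involution $\sigma$ and the isomorphism $\theta\co W\ra W^{+}=\Bl_{M^{+}}(Z)$ over $\sigma$, ampleness checked on the exceptional curves of the surface via Proposition \ref{prop.pic}, Lemma \ref{lem.syz}(iii) for (iii), and rank equals generic multiplicity of $E(f)$ for (iv) (your substitute $E(f)=E(g)$ as schemes for the paper's appeal to Proposition \ref{prop.pic}(iv) is fine, since the total transform $g^{-1}(X)$ is a generically reduced Cartier divisor without embedded points in the normal $W$, hence coincides with the strict transform; and indecomposability of $M$ already follows from indecomposability of $M_{0}$ by Nakayama, so your $\End$-specialisation remark is superfluous). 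The genuine divergence is the identification $\bigwedge^{\rk M^{+}}(g^{+})^{\tri}M^{+}\cong\Q_{W^{+}}(D^{+})$: the paper arranges $-D$ effective and transversal, passes to the double cover $Z\ra P$ to obtain $\sigma_{*}(-\bar{D})\sim\bar{D}$ for $\bar{D}=g_{*}D$, invokes Kn{\"o}rrer's $\sigma^{*}M\cong M^{+}$ and applies $\sigma_{*}$ to the pushed-forward sequence of Lemma \ref{lem.MJ} (with a depth argument) to identify $g^{+}_{*}\Q_{W^{+}}(D^{+})$ with $\llb M^{+}\rrb$ before lifting back to $W^{+}$; you instead observe that over the smooth locus $U_{Z}$ the syzygy sequence is an exact sequence of bundles, so $\det M^{+}\cong(\det M)^{-1}\cong\Q(\bar{D})$ there by (i), and then extend across the codimension-two flopping locus using that $\bigwedge^{\rk M^{+}}(g^{+})^{\tri}M^{+}$ is invertible and $\Q_{W^{+}}(D^{+})$ is reflexive on the normal threefold $W^{+}$ (normality supplied by $\theta$). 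This is a legitimate and arguably cleaner route: it bypasses the covering trick and Kn{\"o}rrer's theorem at this point, and delivers the Cartier property of $D^{+}$ in the same stroke, whereas the paper's computation additionally records the fractional ideal $\llb M^{+}\rrb$ which it uses in its uniqueness remark; since uniqueness of the $D$-flop is in any case the standard fact (\cite[6.2]{kol/mor:98} is the reference you want, rather than the definition in 6.10), nothing essential is lost by your shortcut.
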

\begin{proof}
(i) Let \(f_{0}\co Y_{0}\ra X_{0}\) be the strict transform along \(g\) of a good hyperplane section of \(Z\). Then \(f_{0}\) is a partial resolution of the RDP \(X_{0}\) dominated by the minimal resolution; \cite[1.14]{rei:83}. With \(T=\Spec k[t]^{\tn{h}}\), the hyperplane section map gives \(g\) as an element in \(\df{}{Y_{0}/X_{0}}(T)\). Let \(j\co Y_{0}\ra W\) denote the closed embedding. By Proposition \ref{prop.pic} the restriction \(j^{*}\co \Pic W\ra \Pic Y_{0}\) is an isomorphism where the ample sheaves are in correspondence. In particular \(j^{*}\Q_{W}(-D)\) is ample, isomorphic to \(\ch_{1}(f_{0}^{\tri}M_{0})\) for a reflexive \(\Q_{X_{0}}\)-module \(M_{0}\) and \(f_{0}\) is the blowing-up of \(X_{0}\) in \(M_{0}\) by Theorem \ref{thm.MW}. We may assume \(M_{0}\) is without free summands. By Theorem \ref{thm.square} and Proposition \ref{prop.blowingup} the image of \(g\) in \(\df{}{(X_{0},M_{0})}(T)\) gives a pair \((Z,M)\) such that \(g\) is the blowing-up of \(Z\) in \(M\). Note that \(\depth M=\depth \Q_{T}+\depth M_{0}=1+2\) so \(M\) is MCM. By Lemma \ref{lem.U} and Corollary \ref{cor.blowup}, \(j^{*}g^{\tri}M\cong f_{0}^{\tri}M_{0}\) and hence \(\Q_{W}(-D)\cong\ch_{1}(g^{\tri}M)\) by Proposition \ref{prop.pic} (iv).

(ii) 
By Proposition \ref{prop.pic} we may assume that \(-D\) is an effective divisor intersecting the \(g\)-exceptional locus transversally, hitting all components. 
Put \(\bar{D}=g_{*}(D)\); a Weil divisor. By Lemma \ref{lem.syz} there is an involution \(\sigma\) on \(Z\) and \(\sigma g\co W\ra Z\) is isomorphic to \(g^{+}\). In particular \(D^{+}\) is Cartier.  
There is a degree \(2\) covering \(Z\ra P\) where \(P\) is regular and \(\sigma \) is the covering involution. Since \(\sigma_{*}(-\bar{D})-\bar{D}\) is \(\sigma\)-invariant, it is the pullback of a (principal) Cartier divisor on \(P\); cf. \cite[2.3]{kol:89}. By Lemma \ref{lem.MJ} there is a short exact sequence (\(r=\rk M\)):
\begin{equation}\label{eq.weil}
0\ra \Q_{Z}^{\oplus r-1}\xra{\hspace{0.4em}s\hspace{0.4em}} M\lra g_{*}\Q_{W}(-D) \ra 0 
\end{equation} 
By \cite[2.6 (ii)]{kno:87}, \(\sigma^{*}M\cong M^{+}\). If \(i\co U \hra Z\) denotes the inclusion of the regular locus, the restriction map \(g_{*}\Q_{W}(-D)\ra i_{*}i^{*}g_{*}\Q_{W}(-D)\) is an isomorphism since \eqref{eq.weil} implies \(\depth g_{*}\Q_{W}(-D)\geq 2\). It follows that \(\sigma_{*}g_{*}\Q_{W}(-D)\cong g^{+}_{*}\Q_{W^{+}}(D^{+})\) since \(\sigma_{*}(-\bar{D})\sim \bar{D}=g_{*}^{+}(D^{+})\). Then \(\sigma_{*}\) (\(=\sigma^{*}\)) applied to \eqref{eq.weil} gives  
the short exact sequence 
\begin{equation}\label{eq.weil+}
0\ra \Q_{Z}^{\oplus r-1}\xra{\hspace{0.4em}\sigma^{*}s\hspace{0.4em}} M^{+}\lra g^{+}_{*}\Q_{W^{+}}(D^{+}) \ra 0 
\end{equation}
and \(\bigwedge^{\hspace{-0.12em}r}(g^{+})^{\tri}M^{+}\) is isomorphic to \(\Q_{W^{+}}(D^{+})\) by restricting to \(U\) and extending to \(W^{+}\); cf. \eqref{eq.chern}. In particular, \(D^{+}\) is ample by Proposition \ref{prop.pic} and Theorem \ref{thm.MW} as in the proof of (i). If \(g^{\sharp}\co W^{\sharp}\ra Z\) is another \(D\)-flop of \(g\) and \(D^{\sharp}\) the strict transform of \(D\), then \(g^{\sharp}_{*}(\Q_{W^{\sharp}}(D^{\sharp}))\cong g_{*}\Q_{W}(-D)\cong \llb M^{+}\rrb\) (Lemma \ref{lem.MJ}) and \(g^{\sharp}\cong g^{+}\) by \cite[6.2]{kol/mor:98}. 

(iii) Let \(D'\) be a Cartier divisor on \(W\) such that \(-D'\) is ample. By the above construction, \(g\) is given by blowing up \(Z\) in a maximal Cohen-Macaulay module \(M'\). The \(D'\)-flop which is given by blowing up \(Z\) in the syzygy \((M')^{+}\) is a deformation in \(\df{}{Y_{0}^{+}/X_{0}}(T)\) equal to \(f^{+}\) by Lemma \ref{lem.syz} (iii).

(iv) Since \(E(f_{0})\) is irreducible, we can by Theorem \ref{thm.MW} assume that \(M_{0}\) is indecomposable and hence that the rank of \(M_{0}\) is the intersection number \(\mr{c}_{1}(\ms{M}_{0}).E(f_{0})\) which equals the length of the scheme \(E(f_{0})\) at its generic point. 
By Proposition \ref{prop.pic} (iv) this is also the length of \(E(g)\) at its generic point which is the length of the flop. 
\end{proof}
\begin{rem}
The flop's independence of the divisor \(D\) (even though the contraction \(g\) is not necessarily extremal) is known; e.g. \cite[below Def. 3]{kol:90}. 
\end{rem}
\begin{rem}\label{rem.W}
Theorem \ref{thm.flop} is directly motivated by Curto and Morrison's conjectures \cite[Conj. 1-3]{cur/mor:13} about simple flops described in terms of matrix factorisations which they hoped would enable more explicit versions of the Bridgeland-Chen theorem and its applications. They also noted that Van den Bergh's approach in \cite{vdber:04} seemed closely related to their own. 
Assume $g\co W\ra Z$ is a projective map with $Z$ a singularity of arbitrary dimension, $g$ has at most $1$-dimensional fibres, $\mr{R}^1g_*\Q_W=0$, and $E(g)_{\tn{red}}=\cup E_i$. Van den Bergh constructs a  projective generator $\ms{P}=\Q_W{\oplus}\ms{M}$ for the category $\hspace{-0.2em}{}^{-1}\mr{Per}(W/Z)$ such that $\ms{Q}=\Q_W{\oplus}\ms{M}^{\vee}\hspace{-0.15em}$ is a projective generator for $\hspace{-0.2em}{}^{0}\mr{Per}(W/Z)$; \cite[3.2.7]{vdber:04}. Moreover, $\ms{M}=\bigoplus\ms{M}_i$ for locally free sheaves $\ms{M}_i$ that are generalisations of the strict transform of Wunram modules with $\mr{c}_1(\ms{M}_i).E_j=\delta_{ij}$; \cite[3.5.5]{vdber:04}. In particular $\ms{M}=g^{\tri}M$ for $g$ and $M$ as in Theorem \ref{thm.flop}. With further conditions (normality, $g$ birational, $\codim \varSigma(g)\geq 2$ and $Z$ a canonical hypersurface singularity of multiplicity $2$) there exists a flop $g^+=\sigma g$ for an involution $\sigma$ by \cite[2.2-3]{kol:89}. Put $M=g_*\ms{M}$. Van den Bergh shows that the corresponding $\ms{M}^+$ for $g^+\hspace{-0.1em}\co W^+\ra Z$ satisfies $g^+_*\ms{M}^+\cong M^{\vee}$; \cite[4.3.1]{vdber:04}. Put $\ms{P}^+=\Q_{W^{\hspace{-0.05em}+\hspace{-0.05em}}}{\oplus}\ms{M}^+$ and $\ms{Q}^+=\Q_{W^{\hspace{-0.05em}+\hspace{-0.05em}}}{\oplus}(\ms{M}^+)^{\vee}$. His main result \cite[4.4.2]{vdber:04} implies that $W$ and $W^+$ both are derived equivalent with $\nd{}{W}{\ms{P}}\cong\nd{}{Z}{\Q_Z{\oplus} M}\cong \nd{}{W^{\hspace{-0.05em}+\hspace{-0.05em}}}{\ms{Q}^+}$ such that $\hspace{-0.2em}{}^{-1}\mr{Per}(W/Z)\simeq \cat{Coh}(\nd{}{W}{\ms{P}})\simeq \hspace{-0.2em}{}^{0}\mr{Per}(W^+\hspace{-0.2em}/Z)$. 

We note that since $M$ is MCM by \cite[3.2.9]{vdber:04}, there is an isomorphism of $M^{\vee}\cong\sigma^*M$ with the syzygy module $\Syz M$ by \cite[2.6 (ii)]{kno:87}. This implies that $g^+_*\ms{P}^+$ is isomorphic to $\Q_Z{\oplus}\Syz M$. With $g$ and $M$ as in Theorem \ref{thm.flop} we get that $W^+\cong \Bl_{g_*\hspace{-0.08em}\ms{Q}}(Z)$.

Wemyss and collaborators have developed these ideas in several directions. Put $\vL=\nd{}{W}{\ms{Q}}\cong \nd{}{W}{\ms{P}}^{\tn{op}}$.
While Van den Bergh has no construction of the flop maps, Karmazyn \cite[5.2.4]{kar:17} reconstructs $g$ (in a more general situation) as a quiver GIT moduli space $\ms{M}_{\rk,\vartheta}(\vL)\ra Z$ where the ranks of the indecomposable summands in $\ms{P}$ determine the dimension vector $\rk$ and the stability condition $\vartheta$; \cite[5.1.2]{kar:17}. 
This contrasts with our direct, geometric construction in Theorem \ref{thm.flop} by blowing up in a MCM module and (for the flop) in its syzygy and it would be interesting to know how the two approaches are related. 
 
Assume $Z$ is $3$-dimensional and Gorenstein, $W$ is Gorenstein with terminal singularities, $g$ is birational, $\dim E(g)=1$, and $\mr{R}^1\hspace{-0.1em}g_*\Q_W=0$; \cite[2.9]{wem:18}. 
Suppose a subset ${\cup_{i\in I}E_i}$ is contracted by a small birational map $g_I\co W\ra W_{\hspace{-0.1em}I}$ with $h\co W_I\ra Z$ and $g=hg_I^+$ and with flop $g_I^+\co W^+\ra W_{\hspace{-0.1em}I}$. Put $g^+=hg_I^+$. 
Wemyss defines mutation operators $\nu_I$ and $\mu_I$ \cite[2.18]{wem:18} such that $g^+_*\ms{Q}^+\cong \nu_I(\Q_Z{\oplus} M^{\vee})$; \cite[4.2]{wem:18}.
The translation of flop to mutation of the module on $Z$ allows better control, e.g. of possible new flops and relations to the chamber structure in the quiver GIT moduli spaces, as demonstrated in \cite{wem:18}. We note that in the case $h=\id$ (i.e. all curves are flopped), $\nu(\Q_Z{\oplus} M^{\vee})=\Q_Z{\oplus} (\Syz M)^{\vee}$ and $\mu(\Q_Z{\oplus} M)=\Q_Z{\oplus} \Syz M$ by definition, which ties our construction of the flop to Wemyss' \cite[4.19]{wem:18}. With assumptions as in Theorem \ref{thm.flop}, $W^+\cong\Bl_{\mu M}(Z)$. One may ask if this equation generalises.
\end{rem}

We now consider the relative case. 
First some notation needed in the statement of Theorem \ref{thm.CM}.
Let \(T=\Spec k[t]^{\tn{h}}\) and \(T_{S}=T{\times}^{\tn{h}} S\) for \(S=\Spec A\) and \(A\) any henselian local \(k\)-algebra.
Let \(\Spec B\ra S\) be a local family of singularities, with central fibre \(\Spec B_{0}\). If \(u\in\fr{m}_{B}\) maps to \(u_{0}\in B_{0}\) then \(u_{0}\) is a non-zero-divisor if and only if \(u\) is a non-zero-divisor and \(\Spec B/(u)\) is \(S\)-flat; cf.\ \cite[19.2.4]{EGAIV4}. Moreover, 
\(t\mapsto u\) defines a flat 
map \(\Spec B\ra T_{S}\) which extends \(\Spec B_{0}\ra T\) defined by \(u_{0}\).

Suppose \(g\co W\ra Z\) is a local family over \(S\) where the central fibre \(g_{0}\co W_{0}\ra Z_{0}\) is a small partial resolution of a cDV singularity. Let \(f_{0}\co Y_{0}\ra X_{0}\) be the strict transform along \(g_{0}\) of any good hyperplane section \(X_{0}\) of \(Z_{0}\). 
Then \(f_{0}\) is a partial resolution of an RDP dominated by the minimal resolution; \cite[1.14]{rei:83}. 
By Corollary \ref{cor.square} there is a versal family \(Y\xra{\hspace{0.3em}f\hspace{0.3em}} X\ra R\) for \(\df{}{Y_{0}/X_{0}}\).
By Theorem \ref{thm.MW} there exists a reflexive \(\Q_{X_{0}}\)-module such that blowing up \(X_{0}\) in it gives \(f_{0}\). 
With these notions fixed we have:  
\begin{thm}\label{thm.CM}\hspace{-0.5em}
For every reflexive \(\Q_{X_{0}}\)-module \(M_{0}\) such that \(f_{0}\) is given by blowing up \(X_{0}\) in \(M_{0}\)\textup{,} there is a deformation \((X,M)\) in \(\df{}{(X_{0},M_{0})}(R)\) with the following properties\textup{:}
\begin{enumerate}[leftmargin=2.4em, label=\textup{(\roman*)}]
\item 
Let \(Z\ra T_{S}\) be an extension of the hyperplane section map \(Z_{0}\ra T\)\textup{.} Then there is a map \(h\co T_{S}\ra R\) such that \(g\co W\ra Z\) is the base change of \(f\) along \(h\)\textup{.} 
\item 
Let \(N\) be the base change of \(M\) along \(h\)\textup{.} 
Then \(g\) is the blowing-up of \(Z\) in \(N\)\textup{. }
\item 
Let \(N^{+}\hspace{-0.3em}\)  
denote the syzygy module of \(N\)\textup{.} 
Blowing up \(Z\) in \(N^{+}\hspace{-0.3em}\) gives a local family \(g^{+}\co W^{+}\hspace{-0.3em}\ra Z\) with central fibre \(g_{0}^{+}\) which is the unique flop of \(g_{0}\)\textup{.}
\end{enumerate}
\end{thm}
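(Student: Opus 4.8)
The plan is to deduce everything from the versal family $f\co Y\ra X\ra R$ for $\df{}{Y_0/X_0}$ produced in Corollary \ref{cor.square}, transported to a deformation of the pair by Theorem \ref{thm.square}. Since $X_0$ is an RDP, the module $M_0$ is Wunram, so by Theorem \ref{thm.square}(iii) the forgetful map $\beta$ attached to $f_0$ is an isomorphism and $\alpha\beta^{-1}\co\df{}{Y_0/X_0}\ra\df{}{(X_0,M_0)}$ is a well-defined injection. I would set $(X,M):=\alpha\beta^{-1}(f)\in\df{}{(X_0,M_0)}(R)$. By Proposition \ref{prop.blowingup} (the identity $\gamma\alpha=\id$, with $\gamma$ the blowing-up map), blowing up $X$ in $M$ returns the versal family, so $\Bl_M(X)\ra X$ coincides with $f\co Y\ra X$ over $R$. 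This $(X,M)$ is the claimed deformation.

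For (i) I first check that $g$, with its $T_S$-structure, is an object of $\df{}{Y_0/X_0}(T_S)$. The extension $Z\ra T_S$ is flat; a lift $u$ to $W$ of a local equation of the hyperplane section is a non-zero-divisor, since its restriction to the central fibre $W_0$ --- a normal, hence integral, threefold --- is nonzero, so $W$ is $T_S$-flat as well (cf.\ \cite[19.2.4]{EGAIV4}); $g$ is proper; and $\mr{R}^{1}\hspace{-0.15em}g_{*}\Q_{W}=0$ by Proposition \ref{prop.extbc} and rationality. Finally Reid's theorem \cite[1.14]{rei:83}, together with smallness of $g_0$ --- which forces the divisor cut out by $u$ on $W_0$ to have no exceptional component, so that it equals the strict transform $Y_0$ --- identifies the closed fibre of $g$ over $T_S$ with $f_0\co Y_0\ra X_0$. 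Hence $g\in\df{}{Y_0/X_0}(T_S)$, and versality of $f$ over $R$ produces a map $h\co T_S\ra R$ with $g\cong h^{*}f$; this is (i).

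For (ii), put $N=h^{*}M$. Blowing up in a module commutes with base change by Lemma \ref{lem.U} and Corollary \ref{cor.blowup}, so $\Bl_N(Z)=\Bl_{h^{*}M}(h^{*}X)=h^{*}\Bl_M(X)=h^{*}Y=W$, with structure map $h^{*}f=g$; and $\depth N=\depth\Q_{T_S}+\depth M_0=\dim Z$, so $N$ is maximal Cohen-Macaulay. For (iii), syzygies commute with base change (lift a minimal free cover, as in the proof of Lemma \ref{lem.syz}(i)), so $N^{+}=h^{*}(M^{+})$ and $\Bl_{N^{+}}(Z)=h^{*}\Bl_{M^{+}}(X)$. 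By the well-definedness of the map $+\co\df{}{Y_0/X_0}\ra\df{}{Y_0^{+}/X_0}$ in Lemma \ref{lem.syz}(iii), the pair $(X,M^{+})=\delta(X,M)$ lies in the image of $\alpha^{+}$, and by Proposition \ref{prop.blowingup} for $f_0^{+}$ the blowing-up $\Bl_{M^{+}}(X)\ra X$ is the $R$-flat partial resolution $+(f)\in\df{}{Y_0^{+}/X_0}(R)$. Therefore $\Bl_{N^{+}}(Z)=h^{*}(+(f))$ is $S$-flat and proper over $Z$, i.e.\ $g^{+}\co W^{+}\ra Z$ is a local family over $S$; its central fibre over $S$ is $\Bl_{N_0^{+}}(Z_0)\ra Z_0$, where $N_0$ is the restriction of $N$, equivalently the image of $g_0$ under $\alpha\beta^{-1}$ for $f_0$. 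Since the operation $+$ is independent of the chosen module with $f_0$ as its blowing-up (Lemma \ref{lem.syz}(iii) and Theorem \ref{thm.MW}(ii)), this central fibre equals the unique flop of $g_0$ exhibited in Theorem \ref{thm.flop}(ii)--(iii).

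The main obstacle is step (i): verifying that $g$ together with its $T_S$-structure genuinely lies in $\df{}{Y_0/X_0}(T_S)$ --- concretely, the $T_S$-flatness of $W$ and the identification of the closed fibre over $T_S$ with $f_0$. This is exactly where Reid's structural result on hyperplane sections of small crepant resolutions of cDV singularities is indispensable, and where one must use smallness of $g$ to see that the scheme-theoretic preimage of the hyperplane section $X_0\subset Z_0$ carries no exceptional divisorial component and hence coincides with its strict transform. Once $g\in\df{}{Y_0/X_0}(T_S)$ is in place, the remaining steps are formal: base-change compatibility of blowing up (Lemma \ref{lem.U}, Corollary \ref{cor.blowup}), base-change behaviour of higher direct images and global $\Ext$ (Proposition \ref{prop.extbc}), and the structural results Theorems \ref{thm.square}, \ref{thm.flop} and Lemma \ref{lem.syz}.
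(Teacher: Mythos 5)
Your argument is correct and follows the paper's own proof in all essentials: the pair \((X,M)\) is the transport of the versal family \(f\) through Theorem \ref{thm.square} and Proposition \ref{prop.blowingup}, part (i) is versality applied to \(W\ra Z\ra T_{S}\) viewed as an element of \(\df{}{Y_0/X_0}(T_{S})\) (where you usefully spell out the flatness, closed-fibre and \(\mr{R}^{1}\)-vanishing checks that the paper leaves implicit), and (ii)--(iii) are base-change compatibility of blowing up and of syzygies combined with Lemma \ref{lem.syz} and Theorem \ref{thm.flop}. The only cosmetic difference is in (iii): the paper identifies the central fibre of \(g^{+}\) as the flop via the pulled-back involution \(h^{*}\sigma\), whereas you invoke the independence statements of Theorem \ref{thm.flop}(ii)--(iii) and Lemma \ref{lem.syz}(iii); both routes use the same ingredients.
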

\begin{proof}
Let \(\ms{M}_{0}\) denote the strict transform \(f_{0}^{\tri}M_{0}\).
Let \((Y/X,\ms{M}/M)\) be the versal element in \(\df{}{(Y_{0}/X_{0},\ms{M}_{0}/M_{0})}(R)\) corresponding to \(f\) by Theorem \ref{thm.square}. Then \((X,M)\in \df{}{(X_{0},M_{0})}(R)\).

(i) Note that \(W\ra Z\ra T_{S}\) is an element in \(\df{}{Y_{0}/X_{0}}(T_{S})\). Use versality of \(f\). 

(ii) 
Blowing up \(X\) in \(M\) gives \(f\co Y\ra X\) back and the strict transform of \(M\) is \(\ms{M}\); see Proposition \ref{prop.blowingup}. Then \(g\) is the blowing-up in \(N\) since blowing-up commutes with base change by Lemma \ref{lem.U} and Corollary \ref{cor.blowup}. 

(iii) Let \(M^{+}\) denote the syzygy module of \(M\). Then the central fibre of \(M^{+}\) equals the syzygy \(M_{0}^{+}\) of \(M_{0}\) and the blowing-up of \(X_{0}\) in \(M_{0}^{+}\) gives by Theorem \ref{thm.MW} a partial resolution \(f_{0}^{+}\co Y_{0}^{+}\ra X_{0}\). Put \(\ms{M}_{0}^{+}=(f_{0}^{+})^{\tri}M_{0}^{+}\). By Lemma \ref{lem.syz}, Theorem \ref{thm.square} and Proposition \ref{prop.blowingup} blowing up \(X\) in \(M^{+}\) gives a versal element \(f^{+}\co Y^{+}\ra X\) in \(\df{}{Y_{0}^{+}/ X_{0}}(R)\). Let \(\sigma\) be the involution of \(X\) extending \(\sigma_{0}\) given in Lemma \ref{lem.syz}. Taking syzygies and blowing up commutes with base change (Lemma \ref{lem.U} and Corollary \ref{cor.blowup}). The pullback of \(f^{+}\) by \(h\) gives a map \(g^{+}\co W^{+}\ra Z\) such that its central fibre \(g_{0}^{+}\) is a small partial resolution. The involution \(\sigma\) pulls back to an involution 
\(h^{*}\sigma\) of \(Z\) and \((h^{*}\sigma) g = g^{+}\).
In particular, \(g_{0}^{+}\) is the unique flop of \(g_{0}\); see Theorem \ref{thm.flop}.     
\end{proof}
\begin{rem}\label{rem.CM}
Our results imply the three conjectures stated by Curto and Morrison in \cite{cur/mor:13}.
Conjecture 1 states that every simple flop (i.e. of a simple, small resolution) of length \(l\) is given by blowing up two maximal Cohen-Macaulay modules of rank \(l\). In Conjecture 2 it is stated that the two modules are syzygy modules of each other. This is contained in Theorem \ref{thm.flop}.  
Conjecture 3 says that for a simple partial resolution \(Y_{0}\ra X_{0}\) of an RDP and \(Y\ra X\ra R\) a versal element in \(\df{}{Y_{0}/X_{0}}\), there is an \(\Q_{X}\)-module \(M\) such that the pair \((X,M)\) is in \(\df{}{(X_{0},M_{0})}(R)\) as in Theorem \ref{thm.CM}.  
Moreover, \(Y\ra X\) is the blowing-up of \(X\) in \(M\) and the blowing-up of \(X\) in \(M^{+}\) gives a versal family in \(\df{}{Y_{0}^{+}/X_{0}}\). This is not contained in Theorem \ref{thm.CM}, but follows directly from Theorem \ref{thm.square}, Proposition \ref{prop.blowingup} and Lemma \ref{lem.syz}.

The conjectures also contain some statements about matrix factorisations. Recall that any MCM module on a hypersurface singularity \(\Spec Q/(F)\) is obtained as \(\coker \Phi\) for some pair \((\Phi,\Psi)\) of endomorphisms of a free finite rank module on the non-singular ambient space \(\Spec Q\) where \(\Phi\Psi=F\!{\bdot}\hspace{-0.10em}{\id}=\Psi\Phi\); see \cite{eis:80}. The family of deformations \(X\) in Theorem \ref{thm.CM} can be written as \(\Spec Q/(F)\) for a hypersurface polynomial of the form \(F=z^{2}+G(x,y,t)\) where \(t=t_{1},\dots,t_{n}\) since it is given as a base change of the versal family of an RDP. Conjecture 3 says that there is a matrix factorisation \((\Phi,\Psi)\) of \(F\) representing \(M\) with 
\begin{equation}
\Phi=zI_{2l}+\Theta\qquad\tn{and}\qquad \Psi=zI_{2l}-\Theta
\end{equation}
where \(\Theta\) is a \((2l{\times}2l)\)-matrix with entries from \(k[x,y,t]^{\tn{h}}\), \(l=\rk M\) and \((\Theta,\Theta)\) gives a matrix factorisation of \(-G\). This is however true for any hypersurface \(z^{2}+G(\tn{some other variables})\) as was observed by H.\ Kn{\"o}rrer; see the proof of \cite[2.6 (ii)]{kno:87}. Indeed, put \(P=k[x,y,t]^{\tn{h}}\) and \(\BB{A}=\Spec P\). Then \(M\) is free as \(\Q_{\BB{A}}\)-module of rank \(2l\). Multiplication on \(M\) with \(z\) defines an \(\Q_{\BB{A}}\)-linear map \(\Theta\) with \(\Theta^{2}=-G\!{\bdot}\hspace{-0.10em}{\id}\) and \((\Phi,\Psi)\) is as required. Conjecture 2 contains a very similar statement.
\end{rem} 
\begin{rem}\label{rem.uni}
We believe Theorem \ref{thm.CM} also gives (and clarifies) `the universal flop' in Remark (2) on p.\ 13 in \cite{cur/mor:13} and in \cite[Thm. 5.1]{cur/mor:13}.
With notation as in Theorem \ref{thm.CM}, let  
$f^+\hspace{-0.1em}\co Y^+\ra X$ denote the blowing-up of $X$ in the syzygy module $M^+$ and let $f_0^+\hspace{-0.1em}\co Y_0^+\ra X_0$ be the closed fibre.  
If $M^+$ is isomorphic to $M$, $f$ is isomorphic to $f^+$ and no pullbacks of $(f,f^+)$ can be flops. 
But if $(g_0\co W_0\ra Z_0, g_0^+\hspace{-0.1em}\co W_0^+\ra Z_0)$ is a flop over a cDV and $f_0\co Y_0\ra X_0$ is the strict transform along $g_0$ of a good hyperplane section $X_0$ of $Z_0$ then Theorem \ref{thm.CM} gives a map $Z_0\ra X$ such that the flop is the pullback of $(f,f^+)$ along $Z_0\ra X$. In this sense all local $3$-dimensional flops of terminal index $1$-singularities with a given type of strict transform $f_0$ of a good hyperplane section are pullbacks from the same pair of maps $(f,f^+)$. But $(f,f^+)$ is not a family of flops parametrized by $R$ in the usual sense (e.g. as $(g,g^+)/S$ in Theorem \ref{thm.CM}). Note that the map $Z_0\ra X$, as for versal families, is not unique. Note also that for a given flop there will be many different good hyperplane sections. I.e. the same flop is the pullback from many different `universal flops'. As an example consider Reid's family of flops $Z_0\co x^{2}+yz-t^{2n}$ which are $\mr{cA}_{1}$, but also gives $X_0\cong\mr{A}_{2r-1}$ for $r< n$ by $x=t^r$ and $X_0\cong\mr{A}_{2rn-1}$ by $t=x^r$.
\end{rem}
\begin{rem}\label{rem.VdB}
Our results generalise Curto and Morrison's Conjecture 1 and 2 to local families of possibly non-simple small partial resolutions. Theorem \ref{thm.CM} also shows that a local family of flops is the pullback of a pair $(f,f^+)$ as in Remark \ref{rem.uni}.
This can be turned around to construct some contractions with fibre dimension $1$ and their flops in higher dimensions. Suppose, for a normal singularity $Z$ of dimension $n\,{>}\,3$, there is a sequence of $n-3$ hyperplane sections producing a cDV. This gives a flat family $Z\ra (\BB{A}^{n-3})^{\tn{h}}$. If the strict transform $g_0$ of the hyperplane sections is a small partial resolution, Theorem \ref{thm.CM} would apply to produce $g$ and its flop $g^+$ by blowing up an MCM and its syzygy on $Z$. Even without any $g$, but with an MCM $\Q_Z$-module $M$, $n-2$ hyperplane sections make the pair $(Z,M)$ to a deformation of an RDP with a reflexive module $(X_0,M_0)$. 
Let $f_0\co Y_0\ra X_0$ be the blowing-up of $X_0$ in $M_0$ and $\ms{M}_0=f_0^{\tri}M_0$. With notation as in \eqref{eq.mainsq2}, if the induced map $(\BB{A}^{n-2})^{\tn{h}}\ra R(X_0,M_0)$ factors through the image of $R(Y_0,\ms{M}_0)$ under the closed immersion $a$, then a small partial resolution $g\co W\ra Z$ is obtained by pullback of the versal family in $\df{}{(Y_0/X_0,\ms{M}_0/M_0)}$ and $g$ is also the blowing-up of $Z$ in $M$. Moreover, the pullback of the versal family in $\df{}{(Y_0^+/X_0,\ms{M}_0^+/M_0^+)}$ along the same map, see Lemma \ref{lem.syz}, gives the flop $g^+\hspace{-0.1em}\co W^+\ra Z$ which also is the blowing-up of $Z$ in the syzygy $M^+$.

In this section our aim has been to prove (and generalise) the Curto-Morrison conjectures. We appreciate that the efforts of Van den Bergh and Wemyss are concerned with more general contractions, but many of their statements require a Gorenstein condition. One may ask to what extent our Theorem \ref{thm.square}, which is working for all rational surface singularities, can be applied to more general CM singularities. The blowing-up in a sheaf is a very general technique.
It seems that at least some of the more general contractions (e.g. as in \cite[4.4.2]{vdber:04}) are obtained as blowing-ups, e.g. in $g_*\ms{P}$, cf. Remark \ref{rem.W}. Since the blowing-up has a universal property this could be useful.
\end{rem} 
\providecommand{\bysame}{\leavevmode\hbox to3em{\hrulefill}\thinspace}
\providecommand{\MR}{\relax\ifhmode\unskip\space\fi MR }
\providecommand{\MRhref}[2]{%
  \href{http://www.ams.org/mathscinet-getitem?mr=#1}{#2}
}
\providecommand{\href}[2]{#2}

\end{document}